\def\<#1>{\mathinner{\langle#1\rangle}}
\renewcommand{\dddot}[1]{%
  {\mathop{\kern\z@#1}\limits^{\vbox to-1.4\ex@{\kern-\tw@\ex@
   \hbox{\normalfont ...}\vss}}}}
\theoremstyle{plain}
\newtheorem{theorem}{Theorem}[section]
\newtheorem{lemma}[theorem]{Lemma}
\newtheorem{lem}[theorem]{Lemma}
\newtheorem{prop}[theorem]{Proposition}
\newtheorem{cor}[theorem]{Corollary}
\theoremstyle{definition}
\newtheorem{defn}[theorem]{Definition}
\theoremstyle{remark}
\newtheorem*{remark}{Remark}
\newtheorem*{rmk}{Remark}
\newcommand{\vphi}{\varphi}
\newcommand{\pl}{\partial}
\newcommand{\na}{\nabla}
\newcommand{\lt}{\left}
\newcommand{\rt}{\right}
\newcommand{\rw}{\rightarrow}
\newcommand{\R}{\mathbb{R}}
\newcommand{\cu}{\check{u}}
\newcommand{\cV}{\check{V}}
\newcommand{\mr}{\mathbf{r}}
\renewcommand{\tilde}{\widetilde}
\newcommand{\tr}{\mbox{tr}}
\numberwithin{equation}{section}
\title{Quasi-Spherical Metrics and the Static Minkowski Inequality}
\author{Brian Harvie and Ye-Kai Wang}
\date{}
\begin{document}
\pagestyle{fancy}
\fancyhf{} % sets both header and footer to nothing
\renewcommand{\headrulewidth}{0pt}
% Clear all headers and footers (see also \fancyhf{})
\fancyhf[EHC]{B. Harvie \& Y.K. Wang}
\fancyhf[OHC]{Quasi-Spherical Metrics and the Static Minkowski Inequality}
\fancyhf[FC]{\thepage}

\maketitle
\vspace{-2cm}
\begin{abstract}
We prove that equality in the Minkowski inequality for asymptotically flat static manifolds from \cite{Mc} is achieved only by slices of Schwarzschild space. In order to show this, we prove that a static vacuum metric on $\mathbb{S}^{n-1} \times (r_{0},\infty)$ of the form $g= u^{2}(\theta,r)dr^{2} + r^{2}g_{S^{n-1}}$ belongs to the Schwarzschild family-- that is, we establish uniqueness of \textit{quasi-spherical} static metrics. Additionally, we strengthen the static Minkowski inequality to hold in all dimensions and under relaxed asymptotic assumptions.

Using the complete rigidity statement, we further develop the approach to static uniqueness problems in general relativity from \cite{HW}. Through this approach, we obtain strengthened versions of the higher-dimensional black hole uniqueness theorem from \cite{AM} and the photon surface and static metric extension uniqueness theorems from \cite{HW}. Finally, as a notable by-product of our analysis, we establish regularity of weak inverse mean curvature flow in asymptotically flat manifolds-- that is, a weak IMCF eventually becomes smooth in any asymptotically flat background, as was suggested by Huisken-Ilmanen in \cite{HI2}.

%As an application of rigidity, we extend the black hole uniqueness theorem of Agostiniani-Mazzieri [] to weakly asymptotically flat static spaces under an outer-minimizing assumption. Furthermore, the uniqueness theorems for photon surfaces and static metric extensions from \cite{HW} also extend to this larger class and to arbitrary dimension.
\end{abstract}

\tableofcontents

\section{Introduction}
A Riemannian manifold $(M^{n},g)$ is \textit{static} if it admits a positive solution $V \in C^{\infty}_{+}(M^{n})$ to the system of equations \footnote{Solutions are also taken to smoothly extend to $\partial M$ if non-empty, so in particular $V|_{\partial M} \geq 0$.}
\begin{eqnarray}
    \text{Hess}_{g} V &=& V \text{Ric}_{g}, \label{static} \\
    \Delta_{g} V &=& 0. \nonumber
\end{eqnarray}
 The study of static manifolds is strongly motivated by general relativity-- one may check that $V \in C^{\infty}_{+}(M)$ solves the system \eqref{static} on $(M^{n},g)$ if and only if the Lorentzian metric $\widehat{g}=-V^{2}(x)d\tau^{2} + g$ on $M^{n} \times \mathbb{R}$ is Ricci-flat and hence solves the Einstein equations in vacuum. The most familiar example of a static manifold is Schwarzschild space, a rotationally symmetric Riemannian manifold depending on a parameter $m \in \mathbb{R}$ and given by % correspond to $(n+1)$-dimensional static vacuum spacetimes. The most familiar example is Schwarzschild space, a rotationally symmetric Riemannian manifold depending on a parameter $m \in \mathbb{R}$ and given by

\begin{eqnarray} \label{schwarzschild}
    M^{n}&=& \mathbb{S}^{n-1} \times (r_{m},\infty), \hspace{0.25cm} g_{m}= \left(1-\frac{2m}{r^{n-2}}\right)^{-1} dr^{2} + r^{2}g_{\mathbb{S}^{n-1}}, \hspace{0.25cm} V_{m}(r)= \sqrt{1 - \frac{2m}{r}},
\end{eqnarray}    
 where $r_{m}^{n-2}=\max\{2m, 0\}$. This corresponds to Schwarzschild spacetime one dimension higher.

In this paper, we say that a static triple $(M^{n},g,V)$ is \textit{asymptotically flat} if there exists a compact set $K \subset M$ and a diffeomorphism $F: \mathbb{R}^{n} \setminus B_{1}(0) \rightarrow M^{n} \setminus K$ such that in the coordinates $(x^{1},\dots,x^{n})$ the pull-back metric $g_{ij}$ and potential $V$ satisfy

    \begin{eqnarray}
     g_{ij}(x) &=& \delta_{ij} + o_{2} (1) \hspace{2cm} i,j \in \{ 1, \dots, n \} , \label{af} \\
       V(x) &=& 1 + o_{2}(1), \label{V_decay}
    \end{eqnarray}
as $r \rightarrow \infty$, where  $r= \sqrt{(x^{1})^{2} + \dots + (x^{n})^{2}}$ and $\delta_{ij}$ is the Euclidean metric on $\mathbb{R}^{n} \setminus B_{1}(0)$. We will refer to $M \setminus K$ as the {\it exterior region}. Here, the decay assumption on the metric $g_{ij}$ is weaker than the more standard decay rate of $o_{2}(r^{\frac{n-2}{2}})$ and is insufficient to define the ADM mass of $(M^{n},g)$. However, in the spirit of Smarr \cite{Smarr}, we will refer to the ``mass" of a static triple $(M^{n},g,V)$ with boundary $\partial M$ as the integral quantity

\begin{equation} \label{mass}
    m = \frac{1}{(n-2)w_{n-1}} \int_{\partial M} \frac{\partial V}{\partial \nu} d\sigma.
\end{equation}
We are interested in a Minkowski-type inequality for asymptotically flat static manifolds from \cite{Mc}. We state a refined version of the main theorem in that paper here.

\begin{theorem}[The Static Minkowski Inequality] \label{Mccormick}
Let $(M^{n},g,V)$ be asymptotically flat and let $\Sigma^{n-1} \subset \partial M$ be an outer-minimizing boundary component of $M$. Assume that

\begin{itemize}
\item $n > 7$ and $\partial M = \Sigma$ is connected, OR 
\item $3 \leq n \leq 7$, and $\partial M \setminus \Sigma$, if non-empty, consists of closed minimal hypersurfaces.
\end{itemize}
Then we have the Minkowski-type inequality  
\begin{equation} \label{Minkowski_static}
   \frac{1}{(n-1)w_{n-1}} \int_{\Sigma} VH d\sigma + 2m \geq \left( \frac{|\Sigma|}{w_{n-1}} \right)^{\frac{n-2}{n-1}},
\end{equation}
on the surface $\Sigma^{n-1}$.
\end{theorem}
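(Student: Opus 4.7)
My plan is to follow the inverse-mean-curvature-flow (IMCF) argument that underlies McCormick's version of this inequality in \cite{Mc}, and to identify where the two refinements claimed here --- the weakening of the asymptotic hypothesis to \eqref{af}--\eqref{V_decay} and the extension to all $n>7$ when $\Sigma$ is connected --- need to be inserted.

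First, for $3 \le n \le 7$, I would launch the weak Huisken--Ilmanen level-set IMCF $\{\Sigma_t\}_{t\ge 0}$ starting from $\Sigma$. The outer-minimizing hypothesis gives $|\Sigma_t| = e^t |\Sigma|$, and the assumption that any other boundary components of $M$ are closed stable minimal hypersurfaces makes them barriers, so the flow exists for all time and stays in the exterior region. I would then propagate the quantity
\[ Q(t) := \frac{1}{(n-1)w_{n-1}} \int_{\Sigma_t} V H\, d\sigma + 2m - \left( \frac{|\Sigma_t|}{w_{n-1}} \right)^{\frac{n-2}{n-1}}, \]
using $\partial_t V = \nabla_\nu V / H$ and the standard evolution $\partial_t H = \Delta_{\Sigma_t}(1/H) - (|A|^2 + \mathrm{Ric}(\nu,\nu))/H$. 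Substituting $\mathrm{Hess}_g V = V\,\mathrm{Ric}_g$ and $\Delta_g V = 0$ to trade the ambient Ricci term for Hessians of $V$, and integrating by parts on $\Sigma_t$ via Codazzi, I expect $Q'(t)$ to collapse to a signed integral involving $(|A|^2 - H^2/(n-1)) \ge 0$ plus a square of a tangential gradient of $V$. Across jumps $\Sigma_t^- \rightsquigarrow \Sigma_t^+$ of the weak flow, $|\Sigma_t|$ is preserved, the mean-curvature integral is non-increasing by the Huisken--Ilmanen lemma, and $V$ is continuous, so the sign of the monotonicity persists on all of $[0,\infty)$.

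Next I would show $\lim_{t \to \infty} Q(t) = 0$. Because \eqref{af}--\eqref{V_decay} give only $o_1(1)$ decay, a pointwise asymptotic expansion of $V$ and $g$ is unavailable and one cannot compute the limit term by term. Instead I would exploit the flux conservation
\[ \int_{\Sigma_t} \partial_\nu V\, d\sigma = \int_{\partial M} \partial_\nu V\, d\sigma = (n-2) w_{n-1}\, m, \]
which follows from $\Delta_g V = 0$ and definition \eqref{mass}, combined with the asymptotic roundness of the weak IMCF level sets from Huisken--Ilmanen and $V \to 1$, to show that the divergent pieces of $\int_{\Sigma_t} V H\, d\sigma$ and of $|\Sigma_t|^{(n-2)/(n-1)}$ cancel and leave precisely $-2m$. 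This integral-rather-than-pointwise character of the asymptotic argument is what permits the refinement to \eqref{af}--\eqref{V_decay}, and it is the step I expect to be the main obstacle: obtaining the required isoperimetric and $C^0$-roundness control on the weak IMCF level sets without the quantitative decay of strict asymptotic flatness will be the essential technical work.

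Finally, for $n > 7$ with $\partial M = \Sigma$ connected, weak IMCF is no longer available in sufficient regularity, so I would replace it by a nonlinear potential-theoretic substitute in the spirit of Agostiniani--Fogagnolo--Mazzieri for the Euclidean Minkowski inequality. Concretely, use the level sets of the $V$-weighted harmonic potential $u$ solving $\mathrm{div}(V \nabla u) = 0$ outside $\Sigma$ with $u = 0$ on $\Sigma$ and $u \to 1$ at infinity (or a $p$-capacitary analogue), whose existence and regularity are guaranteed by standard elliptic theory together with the connectedness of $\partial M$. Transferring $Q$ onto these level sets via a change of variables of type $t = -(n-1)\log(1-u)$ reproduces the same monotone quantity and the same limit analysis in all dimensions, with standard elliptic regularity taking the place of the dimension-restricted partial regularity of weak IMCF. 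Connectedness of $\partial M = \Sigma$ is essential here to avoid the inner minimal-boundary contributions that the barrier property of weak IMCF handled in the $3 \le n \le 7$ case.
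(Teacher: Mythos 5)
Your plan for $3 \le n \le 7$ is broadly right in structure --- run weak IMCF, differentiate the Minkowski functional $Q$ using the static equations, integrate by parts, and evaluate the limit at infinity --- but two of the refinements that are the real content of the theorem are handled in ways that would not close.

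First and most seriously, you write that for $n > 7$ ``weak IMCF is no longer available in sufficient regularity'' and propose replacing it with a $V$-weighted harmonic or $p$-capacitary potential in the spirit of Agostiniani--Fogagnolo--Mazzieri. This is a misconception and it leads to an unnecessary and unproved detour. The paper's observation, and the reason the connected-boundary case extends to \emph{all} $n$, is precisely that a proper weak solution to IMCF exists on any weakly asymptotically flat $(M^n,g)$ with connected $C^{1,\alpha}$ boundary in every dimension (Section 3 of \cite{HI}), and the singular set of the level sets has Hausdorff dimension at most $n-8$, which is harmless for the monotonicity and blow-down arguments. The $n<8$ restriction only enters when $\partial M$ has additional components, because constructing jumps requires the minimizing hull boundary to be a $C^{1,\alpha}$ hypersurface (Theorem 1.3 of \cite{HI}). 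Swapping in a nonlinear potential would force you to redevelop existence, monotonicity of a $Q$-type quantity along its level sets, and the asymptotic limit from scratch; none of that is sketched, and it is not needed.

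Second, your treatment of the extra stable minimal boundary components as ``barriers'' so that ``the flow exists for all time and stays in the exterior region'' misidentifies what the stability hypothesis does. The weak flow does \emph{not} stay outside these components: the paper takes fill-ins $W_j$ of the minimal boundary components, and when $\Omega_t \cup W_j$ ceases to be outer-minimizing it constructs a jump to the strictly minimizing hull, as in Section 6 of \cite{HI}. The stability assumption is used (via Lemma A.4, which invokes \cite{HMM}) to prove the flux bound $\frac{1}{(n-2)w_{n-1}} \int_{\partial\Omega \setminus \partial M} \partial_\nu V\, d\sigma \le m$, which is what keeps $Q$ monotone across the constructed jumps. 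Without this, the $2(n-1)w_{n-1}m$ term in $Q$ is not controlled after a jump. This is also the technical reason the paper adds the stability hypothesis, which is missing from McCormick's original statement.

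A smaller point: your anticipated form of $Q'(t)$ includes ``a square of a tangential gradient of $V$,'' but in the smooth evolution the good sign comes solely from $-\int_{\Sigma_t} \frac{V}{H}|\mathring{h}|^2\, d\sigma$; there is no extra tangential-gradient-of-$V$ term once one applies $\Delta_{\Sigma_t} V + V\,\mathrm{Ric}(\nu,\nu) = -\partial_\nu V\cdot H$. Your limit argument for $Q(t) \to (n-1)w_{n-1}^{1/(n-1)}$ via flux conservation and the $C^{1,\alpha}$ roundness of the blown-down level sets is in the right spirit and is essentially what the paper does via Huisken--Ilmanen's Lemma 7.1, so that part is fine.
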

Brendle-Hung-Wang first proved the analogous inequality for star-shaped hypersurfaces in anti-de-Sitter Schwarzschild space in their breakthrough paper \cite{BHW}. Their proof utilizes smooth solutions of inverse mean curvature flow (IMCF), and equality holds only on slices $\Sigma = \{ r= r_{0} \}$ of ADS-Schwarzschild. Later, Wei \cite{Wei} proved inequality \eqref{Minkowski_static} for outer-minimizing hypersurfaces in Schwarzschild space using weak solutions of IMCF, with equality once again being achieved only by slices. In \cite{Mc}, McCormick applied Wei's analysis to obtain \eqref{Minkowski_static} for asymptotitcally flat static $(M^{n},g,V)$ of order $q > \frac{n-2}{2}$ and dimension $n < 8$. Theorem \ref{Mccormick} removes the conditions on $n$ (for a connected boundary) and on $q$ from McCormick's paper. Both of these improvements follow from a straight-forward generalization of the analysis of Wei \cite{Wei}, and because of this we will leave the proof of Theorem \ref{Mccormick} for the appendix.

The main focus of this paper is to characterize equality in inequality \eqref{Minkowski_static}, which follows our earlier work on this problem in \cite{HW}. Equality holds in \eqref{Minkowski_static} when $\Sigma= \{ r = r_{0} \}$ is a slice in Schwarzschild space of any mass $m$, but whether this is the only situation where equality holds is in fact a highly non-trivial question. In this paper, we will show that if $\Sigma \subset \partial M$ achieves equality in \eqref{Minkowski_static}, then outside of a compact set $K$ the manifold $(M^{n},g)$ has the form 
\begin{equation} \label{quasi-spherical}
    M^{n} \setminus K \cong \mathbb{S}^{n-1} \times (r_{0},\infty), \hspace{1cm} g= u^{2}(\theta,r) dr^{2} + r^{2}g_{S^{n-1}}, \hspace{1cm} u \in C^{\infty}_{+}(\mathbb{S}^{n-1} \times (r_{0},\infty)).
\end{equation} 
If the metric coefficient $u$ in \eqref{quasi-spherical} is independent of $\theta$, then the static equations immediately imply that $(M^{n},g,V)$ is a rotationally symmetric piece of Schwarzschild space \eqref{schwarzschild}. However, there is no a-priori reason that $u$ in \eqref{quasi-spherical} must be rotationally symmetric, at least without imposing some additional inner boundary condition. Metrics of the form \eqref{quasi-spherical} in fact have a venerable history in mathematical relativity, originally having been studied in seminal work of Bartnik \cite{B} and later Shi-Tam \cite{ST}. In the terminology of \cite{B}, metrics of the form \eqref{quasi-spherical} are \textit{quasi-spherical metrics} with shear vector $\beta_{A}=0$. Bartnik and Shi-Tam constructed many quasi-spherical metrics that are both asymptotically flat and scalar flat. Our main theorem states that the metric of Schwarzschild space is the only member of this family which is static.

\begin{theorem}[Uniqueness of Quasi-Spherical Static Metrics] \label{main}
Consider a Riemannian manifold
\begin{equation*}
(M^{n},g) \cong (\mathbb{S}^{n-1} \times (r_{0},\infty), u^{2}(\theta,r)dr^{2} + r^{2} g_{S^{n-1}}), \hspace{2cm} u \in C^{\infty}(\mathbb{S}^{n-1} \times (r_{0},\infty)).
\end{equation*}
Suppose that $(M^{n},g)$ admits a static potential $V \in C^{\infty}_{+}(M^{n})$. Then
\begin{eqnarray*}
u(\theta,r) &=& \frac{1}{\sqrt{1 - \frac{2m}{r^{n-2}}}}, \\
V(\theta,r) &=& \sqrt {1 - \frac{2m}{r^{n-2}}}, 
\end{eqnarray*}
for some $m \in \mathbb{R}$. That is, $(M^{n},g,V)$ is isometric to Schwarzschild space $(\mathbb{S}^{n-1} \times (r_{0},\infty), g_{m}, V_{m})$ from \eqref{schwarzschild} for some (not necessarily positive) mass $m$ and $r_{0} \geq r_{m}$.
\end{theorem}
\begin{remark}
The system of PDEs that the the static equations \eqref{static} impose on the metric coefficient $u$ and potential $V$ also admits solutions with $V$ not strictly positive. For example, one may choose $u=1$ so that $(M^{n},g) \cong (\mathbb{R}^{n+1} \setminus B_{r_{0}}(0), \delta)$ and $V=x^{i}$ to be a coordinate function. 
\end{remark}
Notice in the above theorem that no asymptotics are assumed for the metric and potential. Removing these conditions is not necessary to establish rigidity in Theorem \ref{Mccormick}. However, the stronger uniqueness statement for quasi-spherical metrics is interesting from a PDE perspective\footnote{Anderson \cite{Anderson00} (in 4-dimension) used collapsing theory and Bing-Long Chen \cite{Chen19} (in all dimensions) used gradient estimate to show that static spacetimes must have quadratic curvature decay. Reiris also studies extensively the asymptotics of static equations in dimension 3, see \cite{Reiris18} for example.}, as it holds without interior or asymptotic boundary conditions.  With Theorem \ref{main} in hand, we may fully characterize equality in inequality \eqref{Minkowski_static}. 

\begin{theorem}[Equality in the Static Minkowski Inequality] \label{equality case}
Equality holds in \eqref{Minkowski_static} if and only if $\Sigma$ is a slice $\{ r=r_* \}$ of Schwarzschild space \eqref{schwarzschild}.
\end{theorem}
\subsection{Applications of the main theorems}

As a consequence of Theorem \ref{equality case}, an asymptotically flat static $(M^{n},g,V)$ with connected, outer-minimizing boundary $\partial M=\Sigma^{n-1}$ is isometric to a piece of Schwarzschild space if and only if $\Sigma^{n-1}$ saturates inequality \eqref{Minkowski_static}. This rigidity criterion for Schwarzschild is robust in that it holds in all dimensions with mild asymptotics, is independent of the positive mass theorem, and is valid regardless of the specific inner boundary condition. We will show under several different geometric boundary conditions that the left-hand side of \eqref{Minkowski_static} can be controlled above in terms of boundary geometry. From this, we achieve saturation and hence uniqueness of Schwarzschild space in several contexts.

Our first application is to the problem of static black hole uniqueness. This problem asks if an asymptotically flat $(M^{n},g,V)$ with $V|_{\partial M} \equiv 0$ is isometric to Schwarzschild space, which physically means that Schwarzschild spacetime is the only (standard) static vacuum spacetime containing a black hole. Israel \cite{Israel} originally answered this question affirmatively for a connected boundary in $3$ dimensions, and Bunting-Massood ul Alam \cite{BMA} later showed this without a priori assuming $\partial M$ is connected. 

In higher dimensions, the known proofs of black hole uniqueness require additional assumptions to asymptotic flatness, see \cite{HI12} for a comprehensive review. A recent higher-dimensional result due to Agostiniani-Mazzieri \cite{AM} involves the Einstein-Hilbert energy of $\partial M=\Sigma$. The \textit{normalized Einstein-Hilbert energy} of an $(n-1)$-dimensional closed Riemannian manifold $(\Sigma^{n-1},\sigma)$ is the functional

\begin{equation} \label{einstein_hilbert}
   E(\Sigma,\sigma) = \left( |\Sigma| \right)^{\frac{3-n}{n-1}} \int_{\Sigma} R_{\sigma} d\sigma.
\end{equation}
Due to the landmark works of Schoen \cite{Schoen} and Aubin \cite{Aub} on the Yamabe problem, we know that every conformal class of metrics on $\Sigma$ contains a metric $\sigma$ satisfying $E(\Sigma,\sigma) \leq E(\mathbb{S}^{n-1},g_{S^{n-1}})$. This suggests a dichotomy between metrics with ``small" and ``large" Einstein-Hilbert energy. Indeed, \cite{AM} derives a lower bound on the boundary Einstein-Hilbert energy and a consequent uniqueness theorem for asymptotically flat static black holes with decay order $o_{2}(r^ \frac{2-n}{2})$. Here, we recover the same lower bound and the corresponding uniqueness theorem under the asymptotics \eqref{af}-\eqref{V_decay}. Using Proposition 5.1 of \cite{BM23}, we obtain that the horizon is automatically outer-minimizing, and so this assumption may be dropped in our theorem.

\begin{theorem}[Geometry and Uniqueness of Higher-Dimensional Static Vacuum Black Holes] \label{black_hole_uniqueness} 
Let $(M^{n},g,V)$ be asymptotically flat  with connected boundary $\partial M= \Sigma$ arising as a time slice of a Killing horizon-- that is, $V|_{\Sigma} \equiv 0$. Then
\begin{equation} \label{bh_bounds}
\left( \frac{|\Sigma|}{w_{n-1}} \right)^{\frac{n-2}{n-1}} \leq 2m \leq \frac{E(\Sigma,\sigma)}{E(\mathbb{S}^{n-1},g_{S^{n-1}})} \left( \frac{|\Sigma|}{w_{n-1}} \right)^{\frac{n-2}{n-1}},
\end{equation}
where $\sigma=g|_{\Sigma}$. In particular, $E(\Sigma,\sigma) \geq E(\mathbb{S}^{n-1},g_{S^{n-1}})$ with equality if and only if $(M^{n},g,V)$ is isometric to Schwarzschild space $(\mathbb{S}^{n-1} \times (r_{m},\infty), g_{m},V_{m})$ for some mass $m >0$.
\end{theorem}

%\begin{remark}
%For $3 \leq n \leq 7$, $\Sigma$ is automatically outer-minimizing in $(M^{n},g)$ given that $V|_{\Sigma} \equiv 0$. Indeed, taking $\Sigma'$ to be the minimizing hull of $\Sigma \subset (M^{n},g)$, one may flow $\Sigma'$ outward in $(M^{n},g)$ with flow speed $V$. Since $\Sigma' \setminus \Sigma$ is a smooth minimal surface, the area of $\Sigma'$ is non-increasing along the flow. Therefore, if $\Sigma' \neq \Sigma$ we obtain an exterior surface with area less than or equal to that of $\Sigma'$, which is a contradiction. See Lemma 2.4 of \cite{LN15} for the calculation, and note that this proof directly carries over to zero cosmological constant and to any $n < 8$.
%\end{remark}

Next, we mention some uniqueness theorems from \cite{HW} that may be upgraded because of the strengthened version of the Minkowski inequality. We first discuss the problem of photon surface uniqueness. A \textit{photon surface} is a timelike and totally umbilical hypersurface in a Lorentzian manifold. The condition of umbilicity is equivalent to the hypersurface being \textit{null totally geodesic}, meaning that every null geodesic initially tangent to the hypersurface remains tangent to it. For example, the surface $\{ r = (nm)^{\frac{1}{n-2}} \}$ in Schwarzschild spacetime traps null geodesics that are initially tangent to it. 

In static spacetimes it is physically natural to consider photon surfaces where the potential function $V$ is constant over each time slice, dubbed an \textit{equipotential photon surface} by Cederbaum-Galloway \cite{CG}. It $3+1$ dimensions, it is known that the Schwarzschild spacetime is the only asymptotically flat static vacuum spacetime containing an equipotential photon surface-- see \cite{Cederbaum}, \cite{BCC24}, \cite{Rau}, \cite{photon_sphere_positive_mass}, \cite{geometry_photon_surfaces},  \cite{CG}, and \cite{CCF24}. For the higher-dimensional case, we showed in Theorem 1.13 of \cite{HW} that outer-minimizing equipotential photon surfaces in dimension $3 \leq n \leq 7$ also respect the lower bound $E(\Sigma,\sigma) \geq E(\mathbb{S}^{n-1},g_{S^{n-1}})$ on \eqref{einstein_hilbert}, with equality implying uniqueness. Here, we generalize this previous result by extending to all dimensions and masses.

\begin{theorem}[Geometry and Uniqueness of Higher-Dimensional Equipotential Photon Surfaces] \label{photon_surface}
Let $(M^{n},g,V)$ be asymptotically flat with connected boundary $\partial M = \Sigma$ arising as a time slice of an equipotential photon surface. Assume that
\begin{itemize}
\item $m \geq 0$ and $\Sigma$ is outer-minimizing in $(M^{n},g)$, OR,
\item $m < 0$ and $\Sigma$ is outer-minimizing in $(M^{n},g_{-})$ for the conformal metric $g_{-}=V^{\frac{4}{n-2}} g$.
\end{itemize}
Then
\begin{equation} \label{photon_surface_bounds}
\left( \frac{|\Sigma|}{w_{n-1}} \right)^{\frac{n-2}{n-1}} \leq \frac{1}{(n-1)w_{n-1}} \int_{\Sigma} VH d\sigma + 2m \leq \frac{E(\Sigma,\sigma)}{E(\mathbb{S}^{n-1},g_{S^{n-1}})} \left( \frac{|\Sigma|}{w_{n-1}} \right)^{\frac{n-2}{n-1}}.
\end{equation}
In particular, $E(\Sigma,\sigma) \geq E(\mathbb{S}^{n-1},g_{S^{n-1}})$ with equality if and only if $(M^{n},g,V)$ is isometric to Schwarzschild space $(\mathbb{S}^{n-1} \times (r_{0},\infty),g_{m},V_{m})$ for some $r_{0} > r_{m}$.     
\end{theorem}
\begin{remark}
When $n=3$, the normalized Einstein-Hilbert energy reduces to $E(\Sigma,\sigma)= 2 \int_{\Sigma} K d\sigma = 4\pi \chi(\Sigma) \leq E(\mathbb{S}^{2},g_{S^{2}})$ as a consequence of the Gauss-Bonnet Thoerem. Therefore, we obtain the full photon surface and black hole uniqueness theorems for asymptotically flat static manifolds in $3$ dimensions as a corollaries of Theorems \ref{black_hole_uniqueness} and \ref{photon_surface}.
\end{remark}

\begin{remark}
Around the time of release of this paper, Cederbaum-Cogo-Leandro-Dos Santos \cite{CCLS24} derived similar black hole and equipotential photon surface uniqueness theorems based on a divergence identity approach which generalizes the one of Robinson \cite{R78}. If we are correctly interpreting their paper, the main differences appear to be that the authors in \cite{CCLS24} assume a first-order expansion near infinity for the static potential but do not make an outer-minimizing assumption when $\Sigma$ is a cross section of an equipotential photon surface. The bounds on mass in \cite{CCLS24} also appear to be slightly different.
\end{remark}
Finally, we address the uniqueness of static metric extensions, a problem which is motivated by R. Bartnik's proposal for a quasi-local mass-- see \cite{Bar03} for a review. Consider an asymptotically flat manifold $(M^{n},g,V)$ such that the induced metric $\sigma$ and mean curvature $H$ of the boundary $\Sigma$ correspond to that of a slice $\{ r = r_{0}\}$ of Schwarzschild space \eqref{schwarzschild} with mass $m \geq 0$. In \cite{HW}, we showed that $(M^{n},g,V) \cong (\mathbb{S}^{n-1} \times (r_{0}, \infty), g_{m},V_{m})$ in dimensions less than $8$ under the additional assumption that the first eigenvalue of the Jacobi operator of $\Sigma$ is bounded below by the corresponding first eigenvalue of $\{r = r_{0} \} \subset (\mathbb{S}^{n-1} \times (r_{m},\infty), g_{m})$. We called this condition ``Schwarzschild stability" in \cite{HW}. We remark that this stability condition agrees with an earlier result on this problem by Miao \cite{boundary_effects}. To conclude, we show static metric extension uniqueness for the larger class of static manifolds considered here.
\begin{theorem}[Uniqueness of Schwarzschild-Stable Static Metric Extensions] \label{static_extension}
Let $(M^{n},g,V)$ be asymptotically flat  with connected, outer-minimizing boundary $\partial M=\Sigma \cong \mathbb{S}^{n-1}$. Suppose that induced metric and mean curvature of $\Sigma$ satisfy

\begin{eqnarray}
    \sigma &=& r_{0}^{2} g_{S^{n-1}},  \\
    H_{\Sigma} &=& H_{0}, \hspace{1cm} 0 < H_{0} \leq (n-1) r_{0}^{-1}.
\end{eqnarray}
Suppose further that $\Sigma \subset (M^{n},g)$ is Schwarzschild stable in the sense of \cite{HW}, Definition 1.7. Then $(M^{n},g,V)$ is isometric to a piece of Schwarzschild space $(\mathbb{S}^{n-1} \times (r_{0},\infty), g_{m},V_{m})$ from \eqref{schwarzschild} for some $m \geq 0$.
\end{theorem}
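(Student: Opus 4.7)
My plan is to reduce Theorem \ref{static_extension} to the equality case of the static Minkowski inequality (Theorem \ref{equality case}) by sandwiching the mass of $(M^n,g,V)$ between two opposing estimates. Observe that the given boundary data $(r_0^2 g_{S^{n-1}}, H_0)$ are precisely those of a slice $\{r = r_0\}$ in a Schwarzschild space of mass
\[
    m_0 \;=\; \frac{r_0^{n-2}}{2}\left(1 - \frac{H_0^2 r_0^2}{(n-1)^2}\right),
\]
which is non-negative exactly under the hypothesis $H_0 \leq (n-1) r_0^{-1}$; on such a slice one has $V_{m_0}(r_0) = H_0 r_0/(n-1)$. The whole proof reduces to showing $m = m_0$ and that equality holds in \eqref{Minkowski_static}.

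The first estimate comes from plugging the boundary data into Theorem \ref{Mccormick}. Since $|\Sigma| = w_{n-1} r_0^{n-1}$ and $H_\Sigma \equiv H_0$ is constant, the static Minkowski inequality collapses to
\[
   \frac{H_0 \, \overline{V}}{n-1}\, r_0^{n-1} \;+\; 2m \;\geq\; r_0^{n-2}, \qquad \overline V := \frac{1}{|\Sigma|}\int_\Sigma V\, d\sigma,
\]
with equality exactly when $(m,\overline V) = (m_0,\, H_0 r_0/(n-1))$. The second estimate comes from the Schwarzschild stability hypothesis. Following the strategy of \cite{HW}, restriction of the static equations \eqref{static} to $\Sigma$ yields a scalar identity coupling $\Delta_\Sigma V$, $\partial_\nu V$, $V\,\mathrm{Ric}(\nu,\nu)$, and $HV$; integrating this identity against $V$ over $\Sigma$ and using the first-eigenvalue lower bound on the Jacobi operator as a Poincar\'e-type inequality produces the complementary bound $\overline V \leq H_0 r_0/(n-1)$. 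Substituting back into the Minkowski inequality then forces equality in \eqref{Minkowski_static}, and hence $m = m_0$.

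Finally, equality in \eqref{Minkowski_static}, combined with $H_\Sigma = H_0 > 0$ and $R_\Sigma = (n-1)(n-2) r_0^{-2} > 0$, places us exactly in the hypotheses of Theorem \ref{equality case}, which then identifies $(M^n, g, V)$ with the Schwarzschild exterior of mass $m_0$. The main obstacle is the Schwarzschild-stability step of the sandwich: one must verify that the eigenvalue argument developed in \cite{HW} under $n < 8$ and standard asymptotic flatness carries over verbatim to all dimensions and to weak asymptotics. Since that computation is purely local to $\Sigma$, the only global ingredient that was sensitive to the dimension restriction is the static Minkowski inequality itself, which has already been upgraded in Theorem \ref{Mccormick}; once this check is done, the argument goes through without further change.
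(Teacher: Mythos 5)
Your high-level outline — reduce to equality in the static Minkowski inequality and apply Theorem \ref{equality case} — is the right destination, and your arithmetic identifying $m_0$ and $V_{m_0}(r_0)=H_0 r_0/(n-1)$ is correct. However, the way you close the argument has a genuine gap, in two respects.

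First, the role of Schwarzschild stability is misattributed. In the paper (following Section 5 of \cite{HW}), Schwarzschild stability is used to deduce that $V|_\Sigma$ is \emph{constant} and that $\Sigma$ is \emph{totally umbilical} ($h_\Sigma=\frac{H_0}{n-1}\sigma$). It does not, by itself, give the bound $\overline V\le H_0 r_0/(n-1)$. That bound is a separate ingredient: it is the equipotential Minkowski inequality (Theorem \ref{equi_minkowski}), obtained by applying \eqref{Minkowski_static} in the conformally modified metric $g_-=V^{4/(n-2)}g$ with potential $V^{-1}$. Your proposal never invokes this conformal ingredient; the ``Poincar\'e-type inequality from the Jacobi operator'' you gesture at is not the mechanism used in \cite{HW} and, as stated, is not substantiated.

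Second, even granting $\overline V\le H_0 r_0/(n-1)$, your sandwich does not close. Substituting that upper bound into $\frac{H_0\overline V}{n-1}r_0^{n-1}+2m\ge r_0^{n-2}$ only strengthens the lower bound to $m\ge m_0$ — the wrong direction. To force equality in \eqref{Minkowski_static} one needs the \emph{reverse} inequality $\frac{1}{(n-1)w_{n-1}}\int_\Sigma VH\,d\sigma+2m\le\left(|\Sigma|/w_{n-1}\right)^{(n-2)/(n-1)}$. The paper obtains this by first using constancy of $V|_\Sigma$ and umbilicity (via Gauss and Codazzi) to show $\partial_\nu V\equiv\kappa$ and
\begin{equation*}
H_0\kappa=-V_0\,\mathrm{Ric}(\nu,\nu)=\frac{V_0}{2}\left(R_\Sigma-\frac{n-2}{n-1}H_0^2\right),
\end{equation*}
then expressing $m=\frac{1}{(n-2)w_{n-1}}\int_\Sigma\kappa\,d\sigma$ in terms of $V_0$, $H_0$, $R_\Sigma$, and finally bounding the $R_\Sigma$-term by the Einstein--Hilbert hypothesis (automatic here since $\sigma$ is round) and the $V_0$-term by the equipotential Minkowski inequality. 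Only this explicit identity for $m$ produces the reverse inequality; your proposal skips it entirely. To repair the argument you should route it through Theorem \ref{general_rig} exactly as the paper does, with the equipotential Minkowski inequality as the decisive step, and treat the borderline case $H_0=(n-1)r_0^{-1}$ (i.e.\ $m=0$) separately as noted in the paper.
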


\subsection{Methods and structure of the paper}
Sections 2 and 3 are dedicated to constructing a diffeomorphism $F: \mathbb{S}^{n-1} \times (r_{0},\infty) \rightarrow (M^{n} \setminus K, g)$ such that the pull-back metric is given by \eqref{quasi-spherical} in the equality case via smooth IMCF. In Section 2, we prove that weak IMCF is eventually smooth in an arbitrary asymptotically flat background by extending the analysis for star-shaped IMCFs in $\mathbb{R}^{n}$ from \cite{HI2}. To explain the idea of this smoothing result, recall that from Corollary 2.2 and Remark 2.8(a) of \cite{HI2},
IMCF exists as long as the mean curvature stays positive and bounded. 

In Euclidean space, the evolution of mean curvature is given by
\begin{align*}
\pl_t H = \frac{1}{H^2}\Delta H - 2 \frac{|\na H|^2}{H^3} - \frac{|A|^2}{H}.
\end{align*}
So $H$ remains bounded but may go to zero in finite time. The insight of \cite{imcf_starshaped} and \cite{Ur} is to consider star-shaped hypersurfaces, namely the support function $\langle X,\nu \rangle$ is positive everywhere. From the evolution equation 
\begin{align}\label{support}
\pl_t \langle X,\nu \rangle = \frac{1}{H^2}\Delta \langle X,\nu \rangle + \frac{|A|^2}{H^2} \langle X,\nu \rangle,
\end{align} 
one sees that star-shapedness is preserved and $H \langle X,\nu \rangle$ is bounded from below via maximum principle. Hence IMCF starting from a star-shaped hypersurface exists for all time. 

Brendle-Hung-Wang \cite{BHW} study IMCF in anti-de Sitter-Schwarzschild manifold with positive mass. They realize that star-shapedness is again preserved with the position vector $X$ replaced by the (radial) conformal Killing vector; moreover the term $-\frac{Ric(\nu,\nu)}{H}$ in the evolution equation
\begin{align*}
\pl_t H = \frac{1}{H^2}\Delta H - 2 \frac{|\na H|^2}{H^3} - \frac{|A|^2}{H} - \frac{Ric(\nu,\nu)}{H} 
\end{align*}
is positive if the mass is positive. Therefore, $H \langle X,\nu \rangle$ is bounded from below as in Euclidean space and the long-time existence of IMCF starting from star-shaped hypersurfaces follows.

Our idea of handling the Ricci curvature term with wrong sign is motivated by the derivation of the monotonicity formula in \cite{BHW}. Anti-de Sitter-Schwarzschild manifold is in fact a static manifold (with negative cosmological constant): there exists a positive function $f$ satisfying 
\[ \Delta_g f g - D^2 f + f Ric =0. \]
Using the relation
\begin{align*}
\Delta_g f = \Delta f + D^2 f (\nu,\nu) + H \frac{\pl f}{\pl \nu}, 
\end{align*}
Brendle-Hung-Wang rearrange 
\begin{align*}
\pl_t (fH) &= \frac{\pl f}{\pl\nu} + f \lt( -\Delta \frac{1}{H} - \frac{|A|^2}{H} - \frac{Ric(\nu,\nu)}{H} \rt) \\
&= 2 \frac{\pl f}{\pl\nu} + \frac{\Delta f}{H} + f \lt( -\Delta\frac{1}{H} - \frac{|A|^2}{H} \rt)
\end{align*}
so that the curvature term is eliminated. The lesson is that a weight function $f$ with $D^2 f -(\Delta_g f)g$ sufficiently positive can be used to handle the curvature term. It turns out $f = r^{-l}$ works for the asymptotically flat manifolds under consideration. The vector field $X = r \frac{\pl}{\pl r}$ is only approximately conformal Killing and the error results in a small amount of $|A|$ so that we end up using $f \langle X,\nu \rangle^2 H$ to gain enough positivity. Following \cite{HI2}, we estimate the sup-norm of $u = f^{-1} \langle X,\nu \rangle^{-2} H^{-1}$ to show that $H$ stays positive and the regularity follows. 

The smooth flow surfaces $\Sigma_{t}$ of IMCF are totally umbilical whenever $\Sigma$ achieves equality in \eqref{Minkowski_static}. In Section 3, we show that the blow-down lemma from \cite{HI} also implies that these $\Sigma_{t}$ are each intrinsically round. Furthermore, the foliation by smooth IMCF outside of a compact set gives us an optimal gauge to express the ambient metric in, and after a change of variable the metric takes the form \eqref{quasi-spherical}.

Next, we give the idea of the proof of Theorem \ref{main} in 3-dimension. The special form of the metric makes it feasible to study the static equations using spherical harmonics (Definition \ref{spherical harmonics}). {\it Assume} $u$ and $V$ admit a power series expansion in $\frac{1}{r}$:
\begin{align*}
u &= 1 + \frac{m}{r} + \sum_{k=2}^\infty \frac{u^{(-k)}}{r^k}\\
V &= 1 - \frac{m}{r} + \sum_{k=2}^\infty \frac{V^{(-k)}}{r^{k}}
\end{align*}
where $m \in \R$ and $u^{(-k)}, V^{(-k)}$ are functions on $S^2$. Let $\na$ and $\Delta$ denote the Levi-Civita connection and Laplacian with respect to $g_{S^2}$. The zero scalar curvature equation of $g$ depends only on $u$. Expanding it order by order, we see that $u^{(-2)}, u^{(-3)}$ are in $\mathcal{H}_{\ell\le 1}$ and 
\begin{align*}
(\Delta + 6) u^{(-4)} + 2 u^{(-2)}\Delta u^{(-2)} -3 (u^{-2)})^2 = f_1
\end{align*}
with $f_1 \in \mathcal{H}_{\ell\le 1}$. Hence $2u^{(-2)}\Delta u^{(-2)} - 3 (u^{(-2)})^2$ has no $\ell=2$ mode and it follows that $u^{(-2)}$ has no $\ell=1$ mode. Using the $rr$-component of the static equation, we solve for $V^{(-2)} = -\frac{1}{3} u^{(-2)}$. Consequently, $u$ and $V$ are rotationally symmetric up to the order $r^{-2}$.

Next, make the inductive hypothesis that $u$ and $V$ are rotationally symmetric up to the order of $r^{-k+1}$. From the previous step, $k \ge 3$. The $ra$-component of the static equation implies
\begin{align*}
-(k+1) V^{(-k)} =  u^{(-k)} + c_1
\end{align*}
for some constant $c_1$. The $ab$-component of the static equation implies
\begin{align*}
\na_a\na_b (V^{(-k)} + u^{(-k)}) = \lt( k V^{(-k)} - k u^{(-k)} + 2 u^{(-k)} + c_2 \rt) (g_{S^2})_{ab}
\end{align*}
for some constant $c_2$. As a result, $V^{(-k)} + u^{(-k)}$ is in $\mathcal{H}_{\ell\le 1}$ and
\begin{align*}
-V^{(-k)} - u^{(-k)} = k V^{(-k)} - k u^{(-k)} + 2 u^{(-k)} + c_3 
\end{align*}
for some constant $c_3$. Putting these together, we get $(2-k) u^{(-k)} = c_1 - c_3$ and hence $u^{(-k)}$ and $V^{(-k)}$ are both constant. By induction, $u$ and $V$ are rotationally symmetric to any order.

It turns out that the solution to the zero scalar curvature equation has an $r^{-4}\log r$ term so that a nonconstant $u^{(-2)}$ is allowed. We resort to the static equations to show that $u^{(-2)}$ and $V^{(-2)}$ are each constant functions. Finally, since we do not know that $u$ and $V$ are analytic at infinity in polar coordinates, we will instead turn the formal computation in the second step into an estimate, making full use of the static equations to show that the non-rotationally symmetric part of $u$ and $V$ must vanish.

The rest of the paper is organized as follows. In Section \ref{section:static equations in polar coordinates}, we derive the static equations in polar coordinates and recall basic definitions and properties of spherical harmonics. In Section \ref{section:asymptotics of the lapse}, we study the asymptotic behavior for the solutions of zero scalar curvature equation based on the work of Bartnik \cite{B} in dimension $n=3$ and Shi-Tam \cite{ST} in dimensions $n \ge 4$. In Section \ref{section:rotational symmetry at the order}, we show that the static potential $V$ has similar asymptotic behavior as $u$ and use it to show that $u$ and $V$ are rotationally symmetric to the order $r^{1-n}$. In Section \ref{section:proof of main theorem}, we prove full rotational symmetry. This leads to the proof of Theorems \ref{main} and \ref{equality case}.

We prove the applications of the main rigidity theorem in Section \ref{section:applications}. All of these rely on an additional Minkowski inequality for the level sets of the potential function originally derived in \cite{HW}. Since this inequality comes as a consequence of \eqref{Minkowski_static}, we extend the level-set Minkowski inequality to the larger class of static manifolds. In Theorem \ref{general_rig}, we combine these inequalities to prove \eqref{photon_surface_bounds} for any $(M^{n},g,V)$ with CMC, equipotential, and totally umbilical boundaries. Theorems \ref{photon_surface} and \ref{static_extension} then follow as corollaries. For the black hole uniqueness theorem, we apply the level set Minkowski inequality to the near-horizon level sets to obtain the upper bound in \eqref{bh_bounds}.

%There are two appendices. In Appendix A,%
We include the proof of the upgraded version of the static Minkowski inequality in the appendix. A proper weak solution to IMCF always exists on an $(M^{n},g)$ with connected, smooth boundary $\partial M= \Sigma$ and the asymptotics \eqref{af}-- this was shown in Section 3 of \cite{HI}-- and jumps in the weak flow are constructed at discrete times for multiple boundary components in low dimensions. The monotonicity argument for the static Minkowski functional $Q(t)$ under proper weak IMCF from \cite{Wei} and \cite{Mc} is in fact local-- in particular, it does not depend on ADM mass-- so it directly translates to weakly\footnote{Here, ``weakly asymptotically flat" means $g-\delta = o_1(1)$ and $ V = 1 + o_1(1)$ instead of $o_2(1)$.} asymptotically flat $(M^{n},g,V)$. Finally, only the blow-down lemma (Lemma 7.1 in \cite{HI}) which assumes the asymptotics \eqref{af} is required to evaluate the limit of $Q(t)$. 

\subsection*{Acknowledgements}
Ye-Kai Wang acknowledges support from Wei-Lun Ko and NSTC grant 112-2115-M-A49-009-MY2. Brian Harvie thanks Columbia University and the NCTS Mathematics Division for continued support.

%Appendix B contains two well-known results, one on inverse mean curvature flow and the other on asymptotically flat static manifolds.

\section{Regularity of inverse mean curvature flow in asymptotically flat manifolds}
The proof of Theorem \ref{Mccormick} is based on inverse mean curvature flow (IMCF). Given a Riemannian manifold $(M^{n},g)$ and a smooth, closed manifold $\Sigma^{n-1}$, a family of embeddings $F: \Sigma^{n-1} \times [0,T) \rightarrow (M^{n},g)$ solve IMCF if
\begin{equation} \label{IMCF_flow}
    \frac{\partial}{\partial t} F (p,t) = \frac{1}{H} \nu(p,t), \hspace{1cm} (p,t) \in \Sigma^{n-1} \times [0,T),
\end{equation}
where $\nu$ and $H> 0$ are the outer unit normal and mean curvature of the surface $\Sigma_{t}=F_{t}(\Sigma) \subset (M^{n},g)$. For most geometric applications, it is more useful to work with weak solutions of \eqref{IMCF_flow}. A (proper) weak IMCF on a Riemannian manifold $(M^{n},g)$ corresponds to a function $u \in C^{0,1}_{\text{loc}} (M^{n})$, with the weak flow surfaces given by the level sets $\Sigma_{t}=\partial \{ u < t\}$ of $u$. We will postpone a more thorough discussion of weak IMCF until the appendix. However, we require an additional asymptotic property of weak IMCF in asymptotically flat manifolds to understand rigidity in \eqref{Minkowski_static}, which we prove in this section.

\begin{theorem}[Regularity of Weak IMCF in Asymptotically Flat Manifolds] \label{imcf_smoothing}
Let $(M^{n},g)$ be a Riemannian manifold which is asymptotically flat; more precisely, there exists a function $\epsilon(r)$ with $\lim_{r\rw\infty} \epsilon(r) = 0$ such that
\begin{align}
|g_{ij} - \delta_{ij}| + r|\pl_k g_{ij}| + r^2 |\pl_k\pl_l g_{ij}| \le \epsilon(r). 
\end{align}
Then for any proper weak solution $u \in C^{0,1}_{\text{loc}} (M^{n})$ of inverse mean curvature flow on $(M^{n},g)$, there exists a time $T_0 < +\infty$ such that $\Sigma_{t}= \partial \{ u < t \}$ is smooth, i.e. solves classical IMCF \eqref{IMCF_flow}, for $t \in (T_{0},\infty)$. Moreover, the mean curvature for $\Sigma_t, t > T_0$ satisfies the lower bound
\begin{align}
H \ge c \min ( 1, (t - T_0)^{\frac{1}{2}} ) e^{-\frac{t}{n-1}}
\end{align}
where the constant $c$ depends only on $n,\epsilon(r),$ and $T_0$.
\end{theorem}
\begin{remark}
This result was not neccessary in the prequel paper \cite{HW} since the additional boundary conditions in that paper are sufficient to ensure that the classical IMCF of $\Sigma_{0}$ does not develop finite-time singularities. However, Theorem \ref{imcf_smoothing} is needed for a complete rigidity statement, since in certain situations, e.g. a horizon boundary, it is not obvious a priori that the weak solution solves \eqref{IMCF_flow} at any time.
\end{remark}

Theorem \ref{imcf_smoothing} is independent of the static equations and may therefore be more broadly useful in mathematical relativity. In fact, according to the top of page 435 in \cite{HI2}, Huisken and Ilmanen at one point intended to prove this theorem in asymptotically flat manifolds and determine the center of mass of the resulting foliation by IMCF as an application. The key to the proof is a lower bound of mean curvature for smooth solutions of \eqref{IMCF_flow} with far-outlying, nearly-spherical initial data in an asymptotically flat end.
 
Let $\{ \Sigma_t \}_{0 \le t < \infty}$ be a weak solution of IMCF with smooth $\Sigma_0$ in an asymptotically flat manifold. Our starting point is the Blowdown Lemma \cite[Blowdown Lemma 7.1, Equation (7.5)]{HI} that says there exists a time $T$ such that for $t \ge T$, $\Sigma_t $ lies in the exterior region and is $C^1$-close to the coordinate sphere $\{ r = \mbox{const.}$. More precisely, the following $C^0$ and $C^1$ bounds hold for $\Sigma_t, t \ge T_0$:
\begin{align}\label{C0C1bound}
r_0 e^{\frac{t}{n-1}} \le r \le (r_0 + r_1(t)) e^{\frac{t}{n-1}}, \quad \langle \pl_r ,\nu \rangle > \eta(t)
\end{align}
where $\lim_{t \rw \infty} r_1(t) \rw 0$ and $\lim_{t\rw\infty} \eta(t) = 1$.

Following \cite{HI2}, we will first prove an apriori estimate on the lower bound of the mean curvature for smooth flow and then pass it to the weak flow by approximation.

\begin{theorem}\label{main_imcf}
Suppose $\{ \Sigma_t\}_{t \ge 0}$ is a smooth solution of IMCF lying in the asymptotic end and satisfying \eqref{C0C1bound}. Then there exists a time $T_1$ that depends only on $n,\epsilon(r)$ such that 
\[ H \ge c \min \{ 1, (t - T_1)^{\frac{1}{2}} \} e^{-\frac{t}{n-1}} \] 
for $t > T_1$ where the constant $c$ depends only on $n,\epsilon(r),$ and $T_1$. 
\end{theorem}

\begin{lem}
Let $l > 0$. For $f = r^{-l}$ and $X = r \frac{\pl}{\pl r}$, we have \begin{align}\label{D2f}
D_iD_j f -(\Delta_g f) g_{ij}  = l \lt[ (l+2) r^{-l-4} x^ix^j + (n-3-l) r^{-l-2}\delta_{ij} \rt] + o(r^{-l-2})
\end{align}
and the covariant derivative
\begin{align}\label{DX}
D X = g + \varepsilon
\end{align}
with 2-tensor $\varepsilon = o_1(1)$. 
\end{lem}
\begin{proof} Note that the Christoffel symbols of $g$ satisfy $\bar\Gamma_{ij}^k = o_1(r^{-1})$. We compute
\begin{align*}
\pl_i f = -l r^{-l-2} x^i = O(r^{-l-1})
\end{align*}
and
\begin{align*}
D_iD_j f &= \pl_i\pl_j f - \bar\Gamma_{ij}^k \pl_k f = l(l+2) r^{-l-4} x^ix^j - l r^{-l-2}\delta_{ij} + o(r^{-l-2})\\
\Delta_g f &= l(l+2-n) r^{-l-2} +o(r^{-l-2}). 
\end{align*}
to get \eqref{D2f}. Next, we compute with $X = x^i \frac{\pl}{\pl x^i}$
\begin{align*}
(DX)_{ij} = g_{jk} \lt( \pl_i x^k + \bar\Gamma_{il}^k x^l \rt) = g_{ij} + o_1(1).
\end{align*}
 \end{proof}

\begin{lem}\label{differential inequality}
Let $u = H^{-1} f^{-1} \langle X,\nu \rangle^{-2}$. Then there exists a large time $T_2$ and small $l$, depending only on $\epsilon(r)$ and $n$, such that 
\begin{align}
\pl_t u &< \frac{1}{H^2}\Delta u - \frac{2}{H^2} u^{-1} |\na u|^2 - \frac{2}{H^3} \na^a H \na_a u \notag\\
&= \na^a \lt( \frac{1}{H^2} \na_a u \rt) - \frac{2}{H^2} u^{-1}|\na u|^2
\end{align}
on $\Sigma_t, t \ge T_2$. 
\end{lem}
 
\begin{proof}
Note that the curvatures of $g$ satisfies $R_{ijkl} = o(r^{-2})$. We compute
\begin{align*}
\pl_t \langle X,\nu \rangle = \frac{1}{H} \lt( 1 + \varepsilon(\nu,\nu) \rt) - \langle X, \na (\frac{1}{H}) \rangle,
\end{align*}
\begin{align}\label{gradXnu}
\pl_b \langle X,\nu \rangle = \varepsilon(\pl_b, \nu) + h_b^c \langle X,\pl_c \rangle, 
\end{align}
and by Codazzi equation
\begin{align*}
\Delta \langle X,\nu \rangle &= \sigma^{ab} \lt[ \pl_a \lt( \varepsilon(\pl_b, \nu) + h_b^c \langle X,\pl_c \rangle \rt) - \Gamma_{ab}^c \lt( \varepsilon(\pl_c,\nu) + h_c^d \langle X, \pl_d \rangle \rt)\rt]\\
&= \sigma^{ab} D_{\pl_a} \varepsilon (\pl_b,\nu) - H \varepsilon(\nu,\nu) + h^{bc} \varepsilon(\pl_b,\pl_c) \\
&\quad + \langle X,\na H \rangle + R(X^T, \nu) + H + h^{ac} \varepsilon(\pl_a, \pl_c) - |A|^2 \langle X,\nu \rangle.
\end{align*}
Here $X^T$ denotes the tangential component of $X$ along $\Sigma_t$.  As a result, the evolution equation of $\langle X,\nu \rangle$ is 
\begin{align*}
\pl_t \langle X,\nu \rangle = \frac{1}{H^2} \lt( \Delta \langle X,\nu \rangle + |A|^2 \langle X,\nu \rangle - R(X^T,\nu) + A * \varepsilon + o(r^{-1}) \rt).
\end{align*}
Using the relation of Laplacians on a hypersurface, we get 
\begin{align*}
\pl_t f = \frac{2}{H} \frac{\pl f}{\pl\nu} + \frac{1}{H^2} \lt( \Delta f - \Delta_g f + D^2 f(\nu,\nu) \rt).
\end{align*}

We are ready to compute the evolution equation of $H f \langle X,\nu \rangle^2$.
\begin{align*}
&\pl_t \lt( H f \langle X,\nu \rangle^2 \rt) \\
&= \lt( \frac{1}{H^2}\Delta H - \frac{2}{H^3} |\na H|^2 - \frac{|A|^2}{H} - \frac{R(\nu,\nu)}{H} \rt) f \langle X,\nu\rangle^2 \\
&\quad + \frac{1}{H^2} \lt(\Delta f +  2H \frac{\pl f}{\pl\nu} + D^2 f(\nu,\nu) - \Delta_g f \rt) \\
&\quad + 2 H f \langle X,\nu \rangle \cdot \frac{1}{H^2} \lt( \Delta \langle X,\nu \rangle + |A|^2 \langle X,\nu \rangle - R(X^T,\nu) + A * \varepsilon + o(r^{-1}) \rt)\\
&= \frac{1}{H^2} \lt[ \Delta (H f \langle X,\nu \rangle^2) - 2 \na H \cdot \na f \langle X,\nu \rangle^2 - 2 H \na f \cdot \na \langle X,\nu \rangle^2 - 2 f \na H \cdot \na\langle X,\nu \rangle^2 \rt] \\
&\quad - \frac{2}{H^3} |\na H|^2 \cdot f \langle X,\nu \rangle^2 - \frac{f}{H} |\na \langle X,\nu \rangle|^2  \\
&\quad + \frac{\langle X,\nu \rangle}{H} \Big[  f |A|^2 \langle X,\nu \rangle - f R(\nu,\nu) \langle X,\nu \rangle + 2 H \frac{\pl f}{\pl\nu} \langle X,\nu \rangle \\
&\qquad\qquad\qquad\qquad + (D^2 f(\nu,\nu) - \Delta_g f)\langle X,\nu \rangle - 2f R(X^T,\nu) + 2f A * \varepsilon + f \cdot o(r^{-1}) \Big].
\end{align*}
Note that \eqref{C0C1bound} implies 
\begin{align*}
|X^T| = \langle X,\nu \rangle \cdot o(1)
\end{align*} Hence by \eqref{gradXnu} and Cauchy-Schwarz, we have \begin{align*}
|\na\langle X,\nu \rangle|^2  \le 2 |A^2| |X^T|^2 + o(1) = |A|^2 \langle X,\nu \rangle^2 \cdot o(1) + o(1).
\end{align*}
Moreover, note that $R(\nu,\nu) = o(r^{-2})$ and $R(X^T,\nu) = o(r^{-1})$. We thus obtain
\begin{align*}
\pl_t \lt( Hf\langle X,\nu \rangle^2 \rt) &= \frac{1}{H^2}\Delta (Hf\langle X,\nu \rangle^2) - \frac{2}{H^3} \na H \cdot \na (Hf\langle X,\nu \rangle^2) + \frac{\langle X,\nu\rangle}{H} \cdot I
\end{align*}
where
\begin{align*}
I &= -4 \na f \cdot \na\langle X,\nu \rangle + f |A|^2 \langle X,\nu \rangle (1 - o(1)) + 2H \frac{\pl f}{\pl\nu} \langle X,\nu \rangle \\
&\quad +(D^2 f(\nu,\nu) - \Delta_g f)\langle X,\nu \rangle + 2f A * \varepsilon + f \cdot o(r^{-1})
\end{align*}

We claim that $I > 0$ if $t$ is sufficiently large and $l$ is sufficiently small. First of all, we have 
\begin{align*}
|\na f| &= |Df| \cdot o(1) = f \cdot o(r^{-1}) \\
\frac{\pl f}{\pl\nu} &= - |Df| (1 + o(1)) = - lf r^{-1} (1 + o(1))
\end{align*}
as a consequence of \eqref{C0C1bound}.
By \eqref{gradXnu} and Cauchy-Schwarz, we have
\begin{align*}
- \na f \cdot \na \langle X,\nu \rangle &= - A(X^T,\na f) - \varepsilon (\na f,\nu) \ge  - |A|\langle X,\nu \rangle f \cdot o(r^{-1}) - f \cdot o(r^{-1}) \\
&\ge -\frac{1}{16} f |A|^2 \langle X,\nu \rangle  - f \cdot o(r^{-1}).
\end{align*}
By Cauchy-Schwarz, we have
\begin{align*}
2H \frac{\pl f}{\pl\nu} \langle X,\nu \rangle &\ge - \frac{H^2}{4(n-1)} f \langle X,\nu \rangle - 4(n-1) \lt( \frac{\pl f}{\pl\nu}\rt)^2 \frac{\langle X,\nu \rangle}{f} \\
&\ge - \frac{1}{4} f |A|^2 \langle X,\nu \rangle - 4(n-1)l^2 \frac{f}{r^2} \langle X,\nu \rangle (1 + o(1))
\end{align*}
and
\begin{align*}
2f A * \varepsilon \ge - \frac{1}{4}f |A|^2 \langle X,\nu \rangle + f \langle X,\nu \rangle^{-1} \cdot o(1)
\end{align*}
Putting these together, we get
\begin{align*}
I \ge f|A|^2 \langle X,\nu \rangle \lt( \frac{1}{4} - o(1) \rt) + \lt( D^2 f(\nu,\nu) - \Delta_g f - 4(n-1) \frac{l^2}{r^2} f - f \cdot o(r^{-2}) \rt)\langle X,\nu \rangle.
\end{align*}
By \eqref{D2f}, $D^2 f(\nu,\nu) - \Delta_g f = l(n-1) r^{-l-2} + o(r^{-l-2})$ and its follows that the second term on the right-hand side is positive if $l$ is small enough and $r$ is large enough (equivalently $t$ large enough). This completes the proof of the claim and we arrive at
\begin{align*}
\pl_t \lt( Hf\langle X,\nu \rangle^2 \rt) &> \frac{1}{H^2}\Delta (Hf\langle X,\nu \rangle^2) - \frac{2}{H^3} \na H \cdot \na (Hf\langle X,\nu \rangle^2).
\end{align*}

For $u = \lt( Hf\langle X,\nu \rangle^2\rt)^{-1}$, we have
\begin{align*}
\pl_t u &< - (Hf \langle X,\nu \rangle^2)^{-2} \lt( \frac{1}{H^2}\Delta (Hf\langle X,\nu \rangle^2) - \frac{2}{H^3} \na H \cdot \na (Hf\langle X,\nu \rangle^2) \rt) \\
&= \frac{1}{H^2}\Delta u - \frac{2}{H^2} u^{-1} |\na u|^2 - \frac{2}{H^3} \na H \cdot \na u
\end{align*}
\end{proof}

Lemma \ref{differential inequality} takes the same form of \cite[Lemma 1.4]{HI2} while $u = (H \langle X,\nu \rangle)^{-1}$ there. 

We need the Sobolev inequality on hypersurfaces.  It takes the same form on Euclidean spaces and on the exterior region of asymptotically flat manifolds under consideration. Note that \cite{HI2} works in $\R^{n+1}$ while we work in $n$-dimensional manifolds.

\begin{prop}\cite[Theorem 2.1]{HoSp}
There is a large constant $\mathcal{R}$ depending on $\epsilon(r)$ such that if
\begin{align*}
\inf_{N^{n-1}} r \ge \mathcal{R}, 
\end{align*}
then there is a constant $c(n)$ depending only on $n \ge 3$ such that
\begin{align*}
\lt( \int_{N^{n-1}} f^{\frac{n-1}{n-2}} \,d\mu\rt)^{\frac{n-2}{n-1}} \le c(n) \int_{N^{n-1}} |\na f| + |H||f| \,d\mu
\end{align*}
for any $f \in C^{0,1}_c(N^{n-1})$.
\end{prop}

We choose $l$ and a time $T_1$ such that the differential inequality \eqref{differential inequality} and the Sobolev inequality holds for $t \ge T_1$. The rest of our analysis follows closely the proof of Theorem 1.5 and Theorem 1.1 in \cite{HI2}; we thus omit arguments that are identical to those in \cite{HI2}. We make a translation in time $\Sigma_t' = \Sigma_{t + T_1}$ so that the comparison to \cite{HI2} is more transparent. It suffices to work under the crude bound
\begin{align}
R_1 e^{\frac{t}{n-1}} \le r, \langle X,\nu\rangle \le R_2 e^{\frac{t}{n-1}}
\end{align}
instead of \eqref{C0C1bound}.
 
The first step is an $L^p$-estimate.
\begin{lem}\label{Lp}
For all $p > 2$,
\begin{align*}
\| u\|_{L^p(\Sigma_t')} < c(n) (\frac{p}{n-1}+1 - \frac{pl}{n-1})^{\frac{1}{2}}  R_1^{-2+l} |\Sigma_0'|^{\frac{p+n-1}{p(n-1)}} e^{\frac{2t}{p}} \lt( e^{(\frac{2}{n-1}+\frac{2}{p} - \frac{2l}{n-1})t} - 1 \rt)^{-\frac{1}{2}}.
\end{align*}
\end{lem}
\begin{proof} In the proof we write $\mr = R_1 e^{\frac{t}{n-1}}$ and $\int$ for $\int_{\Sigma_t'}$. We compute for $p \ge 2$
\begin{align*}
\frac{d}{dt} \int u^p d\mu &< -p(p+1) \int \frac{1}{H^2} u^{p-2} |\na u|^2 d\mu + \int u^p d\mu \\
&= -p(p+1) \int f^2 \langle X,\nu \rangle^4	 u^{p} |\na u|^2 d\mu + \int u^p d\mu \\
&< - \mr^{4-2l} \int |\na g|^2 d\mu + \int u^p d\mu
\end{align*}
where $g = u^{\frac{p}{2}+1}$.

In applying the Sobolev inequality, we have to distinguish the cases $n=3$ and $n >3$. Note that $u^p = g^q$ with $q = \frac{2p}{p+2}$; $1< q < 2$ when $p > 2$. 

I. Case $n=3$. Setting $f = g^q$, the Sobolev inequality becomes 
\[ \lt( \int_{N} g^{2q} d\mu \rt)^{\frac{1}{q}} \le cq^2 |N|^{\frac{1}{q}} \int_{N} |\na g|^2 + H^2 g^2 d\mu \] and from H\"older inequality we derive
\begin{align*}
\frac{d}{dt} \int g^q d\mu < -c^{-1}q^{-2} |\Sigma_t'|^{-\frac{2}{q}} \mr^{4-2l} \lt( \int g^q \rt)^{\frac{2}{q}} + \mr^{4-2l} \int H^2 g^2 d\mu + \int u^p d\mu.
\end{align*}
In view of $\mr^{4-2l} \int H^2 g^2 d\mu \le \int g^q d\mu$ and $|\Sigma_t'| = |\Sigma_0'| e^t$, we get
\begin{align*}
\frac{d}{dt} \int g^q d\mu < -c^{-1} q^{-2} |\Sigma_0'|^{-\frac{2}{q}} R_1^{4-2l} e^{(1-\frac{2}{p} - l)t} \lt( \int g^q d\mu \rt)^{1+\frac{2}{p}} + 2 \int g^q d\mu.
\end{align*} 
Setting $\vphi = e^{-2t} \int g^q d\mu$ and recalling $1 < q < 2$, this is equivalent to
\begin{align*}
\frac{d}{dt} \vphi < -c^{-1} R_1^{4-2l} |\Sigma_0'|^{-\frac{p+2}{p}} e^{(1+\frac{2}{p} - l)t} \vphi^{\frac{p+2}{p}}.
\end{align*}

II. Case $n>3$. Using the Sobolev inequality in the form
\[ \lt( \int_{N} g^{\frac{2(n-1)}{n-3}} d\mu \rt)^{\frac{n-3}{n-1}} \le c(n) \int_{N} |\na g|^2 + H^2 g^2 d\mu,\] we obtain
\begin{align*}
\frac{d}{dt} \int g^q d\mu &< - c(n)^{-1} \mr^{4-2l} \lt(\int g^{\frac{2(n-1)}{n-3}} \rt)^{\frac{n-3}{n-1}} + \mr^{4-2l} \int H^2 g^2 d\mu + \int u^p d\mu \\
&\le - c(n)^{-1} \mr^{4-2l} |\Sigma_t'|^{-\frac{2(p+n-1)}{p(n-1)}} \lt( \int g^q d\mu \rt)^{\frac{p+2}{p}} +2 \int g^q d\mu \\
&= -c(n)^{-1} R_1^{4-2l} |\Sigma_0'|^{-\frac{2(p+n-1)}{p(n-1)}} e^{\lt( \frac{2}{n-1} - \frac{2}{p} - \frac{2}{n-1}l\rt)t } \lt( \int g^q d\mu \rt)^{\frac{p+2}{p}} + 2 \int g^q d\mu.
\end{align*}

Setting $\vphi = e^{-2t} \int g^q d\mu$, we see that for $n \ge 3$,
\begin{align*}
\frac{d}{dt}\vphi < -c(n)^{-1} R_1^{4-2l} |\Sigma_0'|^{-\frac{2(p+n-1)}{p(n-1)}} e^{\lt( \frac{2}{n-1} + \frac{2}{p} -\frac{2l}{n-1} \rt)t } \vphi^{\frac{p+2}{p}}.
\end{align*}
After simplifications
\begin{align*}
\frac{d}{dt}\vphi^{-\frac{2}{p}} &>  \frac{2}{p}c(n)^{-1} R_1^{4-2l} |\Sigma_0'|^{-\frac{2(p+n-1)}{p(n-1)}} e^{(\frac{2}{n-1}+\frac{2}{p} - \frac{2l}{n-1})t},\\
\vphi^{-\frac{2}{p}} &> \frac{2}{p}c(n)^{-1} R_1^{4-2l} |\Sigma_0'|^{-\frac{2(p+n-1)}{p(n-1)}} \lt( \frac{2}{n-1} + \frac{2}{p} - \frac{2l}{n-1} \rt)^{-1} \lt( e^{(\frac{2}{n-1}+\frac{2}{p} - \frac{2l}{n-1})t} - 1 \rt),
\end{align*}
we arrive at
\begin{align*}
\| u\|_{L^p} < c(n) (\frac{p}{n-1}+1 - \frac{pl}{n-1})^{\frac{1}{2}}  R_1^{-2+l} |\Sigma_0'|^{\frac{p+n-1}{p(n-1)}} e^{\frac{2t}{p}} \lt( e^{(\frac{2}{n-1}+\frac{2}{p} - \frac{2l}{n-1})t} - 1 \rt)^{-\frac{1}{2}}.
\end{align*}
\end{proof}

Let $t_0 > 0$ be arbitrary but fixed and set \begin{align*}
v = (t-t_0)^\beta u
\end{align*}
where $\beta = \frac{1}{4}$. Let $v_k = \max(v-k,0)$ for $k \ge 0$ and let $A(k) = \{ p \in \Sigma_t' | v(p,t) > k \}$.

We have
\begin{align*}
&\frac{d}{dt} \int_{\Sigma_t'} v_k^2 d\mu + 2k^2 (t-t_0)^{-2\beta} \mr^{4-2l} \int_{\Sigma_t'} |\na v_k|^2 + H^2 v_k^2 d\mu \\
&< 2\beta (t-t_0)^{-1} \int_{A(k)} v^2 d\mu + 3 \int_{\Sigma_t'} v_k^2 d\mu 
\end{align*} 
where $\mr = R_1 e^{\frac{t}{n-1}}$. Again we have to distinguish the cases $n=3$ and $n \ge 4$.

I. Case $n=3$. The Sobolev inequality for some fixed $1<q < \infty$ yields
\begin{align*}
&\frac{d}{dt} \int_{\Sigma_t'} v_k^2 d\mu + 2 k^2  c^{-1}q^{-2} (t-t_0)^{-2\beta} \mr^{4-2l} |\Sigma_t'|^{-\frac{1}{q}} \lt( \int_{\Sigma_t} v_k^{2q} d\mu \rt)^\frac{1}{q} \\
&< 2\beta (t-t_0)^{-1} \int_{A(k)} v^2 d\mu + 3 \int_{\Sigma_t'} v_k^2 d\mu. 
\end{align*}
Using H\"{o}lder inequality and $|\Sigma_t'| = e^t |\Sigma_0'|$, the term $3 \int v_k^2 d\mu$ can be absorbed to the left and we obtain
\begin{align*}
\frac{d}{dt} \int_{\Sigma_t'} v_k^2 d\mu + k^2 c^{-1} q^{-2}R_1^{4-2l} (t-t_0)^{-2\beta} e^{(2-l)t} |\Sigma_t'|^{-\frac{1}{q}} \lt( \int_{\Sigma_t'} v_k^{2q} d\mu \rt)^\frac{1}{q} < 2\beta (t-t_0)^{-1} \int_{A(k)} v^2 d\mu
\end{align*}
in the interval $[t_0,t_1]$ provided $k \ge  k_0 >0$ and 
\begin{align}\label{k0_1}
k_0^2 \ge 3c q^2 (t_1 - t_0)^{2\beta} R_1^{-4+2l} |\Sigma_0'| e^{(-1+l)t_0}.
\end{align}

II. Case $n \ge 4$. Set $q = \frac{n-1}{n-3}$ and we get the estimate
\begin{align*}
\frac{d}{dt} \int_{\Sigma_t'} v_k^2 d\mu + k^2 c(n)^{-1}  (t-t_0)^{-2\beta} R_1^{4-2l} e^{\frac{4-2l}{n-1}t } \lt( \int_{\Sigma_t} v_k^{2q} d\mu \rt)^\frac{1}{q} < 2\beta (t-t_0)^{-1} \int_{A(k)} v^2 d\mu
\end{align*}
in the interval $[t_0,t_1]$ provided $k \ge k_0 >0$ and 
\begin{align}\label{k0_2}
k_0^2 \ge 3 c(n) (t_1-t_0)^{2\beta} R_1^{-4+2l} |\Sigma_0'|^{\frac{2}{n-1}} e^{(-1+l)\frac{2}{n-1} t_0}.
\end{align}

In summary, setting 
\begin{align*}
B(k) := \begin{cases}
c^{-1} q^{-2} k^2 R_1^{4-2l} |\Sigma_{t_1}'|^{-\frac{1}{q}} e^{(2-l)t_0}, & n=3,\\
c(n)^{-1} k^2 R_1^{4-2l} e^{\frac{4-2l}{n-1} t_0}, &n\ge 4,
\end{cases}
\end{align*} we have the estimate
\begin{align*}
\frac{d}{dt} \int_{\Sigma_t} v_k^2 d\mu + B(k) (t - t_0)^{-2\beta}  \lt( \int_{\Sigma_t} v_k^{2q} d\mu \rt)^\frac{1}{q} < 2\beta (t-t_0)^{-1} \int_{A(k)} v^2 d\mu.
\end{align*}
in the interval $[t_0,t_1]$ for all $n \ge 3$ provided $k \ge k_0$. Fast forwarding, we arrive at the analogue of \cite[(1.14)]{HI2}:
\begin{align*}
&\int_{t_0}^{t_1} \int_{A(k)} v_k^2 d\sigma\\
&< c(n) B(k)^{-\frac{1}{q_0}} \| A(k)\|^{1 - \frac{1}{q_0}} \int_{t_0}^{t_1} (t-t_0)^{-1} |A(k)|^{1-\frac{1}{r}} \lt( \int_{\Sigma_t'} v^{2r} d\mu \rt)^{\frac{1}{r}} dt
\end{align*}
for some $r>1$ to be chosen. Here $d\sigma := (t-t_0)^{-2\beta} d\mu dt$, $\| A(k) \| := \int_{t_0}^{t_1} \int_{A(k)} d\sigma$, and 
\begin{align*}
1 < q < \infty, \quad q_0 = 2 - \frac{1}{q}&, \quad \mbox{if } n=3,\\
q = \frac{n-1}{n-3}, \quad q_0 = 2 - \frac{1}{q} = \frac{n+1}{n-1}&, \quad \mbox{if } n \ge 4.
\end{align*}

Using the $L^p$-estimate in Lemma \ref{Lp} for $u = v (t-t_0)^{-\frac{1}{4}}$, we get
\begin{align*}
\lt( \int_{\Sigma_t'} v^{2r}\rt)^{\frac{1}{r}} &\le (t-t_0)^{\frac{1}{2}} \cdot c(n) \lt( \frac{2r}{n-1} + 1 - \frac{2rl}{n-1}\rt) R_1^{-4+2l} |\Sigma_0'|^{\frac{2r+n-1}{r(n-1)}} e^{\frac{2t}{r}} \lt( e^{(\frac{2}{n-1} + \frac{1}{r} - \frac{2l}{n-1})t} - 1 \rt)^{-1} \\
&\le (t-t_0)^{\frac{1}{2}} \cdot c(n) \lt( \frac{2r}{n-1} + 1 - \frac{2rl}{n-1}\rt) R_1^{-4+2l} |\Sigma_0'|^{\frac{2r+n-1}{r(n-1)}}  \cdot \\
& \hspace{5cm} e^{(\frac{1}{r} - \frac{2}{n-1} + \frac{2l}{n-1})t} \max \lt( 1, t^{-1} \lt( \frac{2}{n-1} + \frac{1}{r} - \frac{2l}{n-1} \rt)^{-1} \rt).
\end{align*}
Moreover, we have $\int_{t_0}^{t_1} (t-t_0)^{-\frac{1}{2}} |A(k)|^{1-\frac{1}{r}} dt \le 2^{\frac{1}{r}} \| A(k) \|^{1 - \frac{1}{r}} (t_1-t_0)^{\frac{1}{2r}}$.

Now choosing $r$ depending only on $q_0 = q_0(n)$ large enough so that $\gamma = 2 - \frac{1}{q_0} - \frac{1}{r} > 1$, we get the iteration inequality
\begin{align*}
|h-k|^2 \| A(h)\| &< \int_{t_0}^{t_1} \int_{A(k)} v_k^2 d\sigma\\
&< c(n,\epsilon) R_1^{-4+2l} B(k)^{-\frac{1}{q_0}} |\Sigma_0'|^{\frac{2r+n-1}{(n-1)r}} e^{(\frac{1}{r} - \frac{2}{n-1} + \frac{2l}{n-1}) t_1}\cdot \max( 1, t_0^{-1} ) (t_1 - t_0)^{\frac{1}{2r}} \| A(k)\|^{\gamma}  .  
\end{align*}
for $h>k \ge k_0 > 0$ where the dependence of $c$ on $\epsilon(r)$ comes from $l$. Stampacchia lemma (see \cite{Fre} for example) then yields
\begin{align*}
\|  A(k_0 + d) \| =0
\end{align*} with
\begin{align*}
d^2 &= c(n,\epsilon) R_1^{-4+2l}  B(k_0)^{-\frac{1}{q_0}}  |\Sigma_0'|^{\frac{2r+n-1}{(n-1)r}} e^{(\frac{1}{r} - \frac{2}{n-1} + \frac{2l}{n-1}) t_1} \cdot \max( 1, t_0^{-1})  (t_1 - t_0)^{\frac{1}{2r}} \| A(k_0)\|^{\gamma-1}\\
&\le c(n,\epsilon) R_1^{-4+2l} B(k_0)^{-\frac{1}{q_0}} |\Sigma_0'|^{1 - \frac{1}{q_0} + \frac{2}{n-1}}\cdot \max( 1, t_0^{-1})(t_1 - t_0)^{\frac{1}{2}(1 - \frac{1}{q_0})} e^{\lt( 1 - \frac{1}{q_0} - \frac{2}{n-1}+ \frac{2l}{n-1} \rt)t_1}
\end{align*}
where in the last inequality we used $\| A(k_0)\| \le 2 |\Sigma_0'| e^{t_1} (t_1 - t_0)^{\frac{1}{2}}$. 

The optimal bound is achieved when $k_0$ is chosen so that $d$ and $k_0$ are comparable. By definition, 
\begin{align*}
B(k_0) = c^{-1} k_0^2 R_1^{4-2l} e^{\frac{4-2l}{n-1} t_0} \lt( |\Sigma_0'| e^{t_1} \rt)^{-\frac{s}{q}}
\end{align*}
with the exponent $s=1$ for $n=3$ and $s=0$ for $n \ge 4$. Note that
\begin{align*}
1 - \frac{1}{q_0} - \frac{2}{(n-1)q_0} + \frac{s}{qq_0} =0.
\end{align*}
We distinguish small and large times:

I. Case $t_0 \le 1$. We choose $t_1 = 2 t_0$ and $k_0 = c(n) t_0^{-\frac{1}{4}} R_1^{-2+l} |\Sigma_0'|^{\frac{1}{n-1}} e^{-\frac{1}{n-1}t_0}$ where $c(n)$ is chosen such that inequalities \eqref{k0_1} and \eqref{k0_2} are satisfied. Since $t_0 \le 1$, the exponential function is bounded and we get
\begin{align*}
\frac{d^2}{k_0^2} &\le c(n,\epsilon) ( k_0^2 R_1^{4-2l} )^{-1 - \frac{1}{q_0}} |\Sigma_0'|^{\frac{s}{qq_0} + 1 - \frac{1}{q_0} + \frac{2}{n-1}} t_0^{-\frac{1}{2} (1 + \frac{1}{q_0})} \le c(n,\epsilon).    
\end{align*}

II. Case $t_0 \ge 1$. We choose $t_1 = t_0 + 1$ and $k_0 = c(n) R_1^{-2 + l} |\Sigma_0'|^{\frac{1}{n-1}} e^{\frac{1}{n-1}(-1+l) t_0}$. 
we have
\begin{align*}
\frac{d^2}{k_0^2} &\le c(n,\epsilon) (k_0^2 R_1^{4-2l})^{-1-\frac{1}{q_0}} |\Sigma_0'|^{\frac{s}{qq_0} + 1 - \frac{1}{q_0} + \frac{2}{n-1}} \\
&\hspace{5cm} \cdot \exp \lt( \lt( 1 - \frac{1}{q_0} - \frac{2}{n-1} + \frac{2l}{n-1} + \frac{-4+2l}{(n-1)q_0} + \frac{s}{qq_0} \rt) t_0 \rt)\\
&\le c(n,\epsilon).
\end{align*}

In view of the definition $v = (t-t_0)^{\frac{1}{4}} H^{-1} f^{-1} \langle X,\nu \rangle^{-2}$ and the choice $t_1 = \min (2t_0,t_0+1)$, this shows in both cases
\begin{align*}
\sup_{\Sigma_{t_1}'} H^{-1} f^{-2} \langle X,\nu \rangle^{-2} \le c(n,\epsilon) \max(1, t_1^{-\frac{1}{2}}) R_1^{-2 + l} |\Sigma_0'|^{\frac{1}{n-1}} e^{\frac{1}{n-1} (-1 + l) t_1}.
\end{align*} 
and \begin{align*}
\min_{\Sigma_t'} H \ge c(n,\epsilon) \min(1,t^{\frac{1}{2}}) e^{-\frac{t}{n-1}} \lt( \frac{R_1}{R_2}\rt)^{2-l} |\Sigma_0'|^{-\frac{1}{n-1}}.
\end{align*}
Recalling $\Sigma_t' = \Sigma_{T_1 + t}$ and using the bound \eqref{C0C1bound}, the proof of Theorem \ref{main_imcf} is completed.

\begin{proof}[Proof of Theorem \ref{imcf_smoothing}]
Given the apriori estimate for the mean curvature, the eventual smoothness of IMCF is proved in Theorem 2.7 of \cite{HI2} for Euclidean space, see also \cite[Theorem 1.2]{LW}. Their proof can be easily adapted to asymptotically flat manifolds. We outline it for the convenience of the readers. 

By the Weak Existence Theorem 3.1 of \cite{HI}, all level sets of a weak solution of IMCF have non-negative, bounded, measurable mean curvature. Let $T$ be given by the Blowdown Lemma. Using the theory of mean curvature flow, there exists a smooth solution of mean curvature flow $\hat F: \Sigma \times (0,\epsilon_0 )$ such that $\hat \Sigma_\epsilon = \hat F_\epsilon (\Sigma)$ converges to $\Sigma_T$ uniformly in $C^{1,\beta} \cap W^{2,p}$ for all $0 < \beta < 1, 1 \le p < \infty$ as $\epsilon \rw 0$. By strong maximum principle, $\hat\Sigma_\epsilon$ either has positive mean curvature everywhere or is a closed minimal hypersurface. Since we may assume all coordinate spheres in the exterior region have positive mean curvature, the latter cannot happen. Increase $T$ if necessary, Theorem \ref{main_imcf} implies that the solution $\hat\Sigma_{\epsilon,s}$ of IMCF starting from $\hat\Sigma_\epsilon$ have a uniform positive lower bound of mean curvature independent of $\epsilon$. By Corollary 2.3 and Remark 2.8(i) of \cite{HI2}, the positive lower and upper bounds of $H$ (inherited from the Weak Existence Theorem) guarantee that the solution exists and is smooth for all time $0 \le s< \infty$. Exactly as in the proof of Theorem 2.5 of \cite{HI2} on page 448-449, $\hat\Sigma_{\epsilon,s}$ converges as $\epsilon \rw 0$ to a smooth solution of IMCF starting from $\Sigma_T$. The uniqueness of IMCF then implies $\Sigma_t$ is smooth for $t> T$.
\end{proof}

\section{Quasi-spherical metrics and umbilical IMCF}
Using Theorem \ref{imcf_smoothing}, we show in this section that an asymptotically flat static manifold $(M^{n},g)$ is isometric to the quasi-spherical metric \eqref{quasi-spherical} outside of a compact set if $\Sigma$ achieves equality in inequality \eqref{Minkowski_static}. \eqref{Minkowski_static} arises from considering the monotone quantity

\begin{equation} \label{Q}
    Q(t) = |\Sigma_{t}|^{-\frac{n-2}{n-1}} \left( \int_{\Sigma_{t}} VH d\sigma + 2(n-1) w_{n-1} m \right)
\end{equation}
under the weak IMCF $\Sigma_{t}$ of $\Sigma$, where $m$ is defined in \eqref{mass}-- we include the proof of monotonicity before the smoothing time $T_{0}$ in the appendix. Here, we calculate the variation of \eqref{Q} for $t > T_{0}$. Given

\begin{eqnarray}
    \frac{\partial}{\partial t} H &=& -\Delta_{\Sigma_{t}} \frac{1}{H} - \left(|h|^{2} + \text{Ric}(\nu,\nu) \right) \frac{1}{H} \nonumber \\
                                  &=& \frac{1}{H^{2}} \Delta_{\Sigma_{t}} H - 2 \frac{|\nabla H|^{2}}{H^{3}} - \frac{1}{n-1} H - \left({|\mathring{h}|^{2}} + \text{Ric}(\nu,\nu) \right) \frac{1}{H} \label{H_evolution} \\
    \frac{\partial}{\partial t} V &=& \frac{2}{H} \frac{\partial V}{\partial \nu} - \frac{1}{H} \frac{\partial V}{\partial \nu} \nonumber \\
                                  &=& \frac{1}{H^{2}} \Delta_{\Sigma_{t}} V + \frac{V}{H^{2}} \text{Ric}(\nu,\nu) + \frac{2}{H} \frac{\partial V}{\partial \nu}. \nonumber
\end{eqnarray}
and the first variation of area, the evolution of $Q(t)$ under \eqref{IMCF_flow} is
\begin{eqnarray*}
   \frac{\partial}{\partial t} Q(t) &=&-\frac{n-2}{n-1} |\Sigma_{t}|^{-\frac{n-2}{n-1}} \left(\int_{\Sigma_{t}} VH d\sigma + 2(n-1) w_{n-1} m \right) \\
   & & + |\Sigma_{t}|^{-\frac{n-2}{n-1}} \left(\int_{\Sigma_{t}} \frac{\partial}{\partial t} (VH) + VH d\sigma\right ) \\
    &=&  -\frac{n-2}{n-1} |\Sigma_{t}|^{-\frac{n-2}{n-1}} \left(\int_{\Sigma_{t}} (VH) d\sigma + 2(n-1) w_{n-1} m \right) \\
    & & + |\Sigma_{t}|^{-\frac{n-2}{n-1}} \left(\int_{\Sigma_{t}} 2 \frac{\partial V}{\partial \nu} + \frac{n-2}{n-1} VH - \frac{V}{H} |\mathring{h}|^{2} d\sigma \right) \\
    &=& - |\Sigma_{t}|^{-\frac{n-2}{n-1}} \int_{\Sigma_{t}} \frac{V}{H} |\mathring{h}|^{2} d\sigma.
\end{eqnarray*}
Therefore, $Q(t) = Q(0)$ for $t \in [T_{0},\infty)$ implies that $\Sigma_{t} \subset (M^{n},g)$ is totally umbilical. Combining this with the blow-down lemma for IMCF from \cite{HI}, we may completely characterize the intrinsic geometry of $\Sigma_{t}$.
%Now, the static equations \eqref{static} imply that $R_{g}= 0$, and so by the Gauss equation

%\begin{eqnarray*}
 %   -\text{Ric}(\nu,\nu) &=& \frac{1}{2} \left( R_{\Sigma_{t}} - \frac{n-2}{n-1} H^{2} \right) = \frac{1}{2} \left( e^{-\frac{2t}{n-1}} R_{\Sigma_{0}} - \frac{n-2}{n-1} H^{2} \right) \\
 %   &\geq& - \frac{1}{2}\frac{n-2}{n-1} H^{2}.
%\end{eqnarray*}
%over $\Sigma_{t}$. Applying this to \eqref{H_evolution} yields
%\begin{eqnarray*}
 %   \frac{\partial}{\partial t} H &\geq& \frac{1}{H^{2}} \Delta_{\Sigma_{t}} H - 2 \frac{|\nabla H|^{2}}{H^{3}} - \frac{n}{2(n-1)} H.
%\end{eqnarray*}
%Thus by the maximum principle

%\begin{equation*}
 %   \min_{\Sigma_{t}} H \geq e^{-\frac{n}{2(n-1)}t} \min_{\Sigma_{0}} H.
%\end{equation*}
%A uniform lower bound on $H$ is sufficient for continuation in IMCF over any finite time interval, c.f. Corollary 2.3 and Remark 2.8(a) in \cite{HI2}. Therefore, $T_{\max}=+\infty$ and $\Sigma_{t}$ foliates $(M^{n},g)$. 
%\end{proof}
%We obtained the following result for asymptotically flat spaces in \cite{HW}. The argument is essentially the same for $o_{1}(1)$ asymptotics, but we include it for convenience of the reader.
\begin{theorem}
Let $(M^{n},g)$ be a Riemannian manifold. Suppose there exists a diffeomorphism $F: \mathbb{R}^{n} \setminus B_{1}(0) \rightarrow M^{n} \setminus K$ such that

\begin{equation} \label{weakly_af}
g_{ij}=\delta_{ij} + o_{1}(1).
\end{equation}
Now suppose that $(M^{n},g)$ is foliated by a solution $F: \mathbb{S}^{n-1} \times (0,\infty) \rightarrow (M^{n},g)$ to IMCF \eqref{IMCF_flow} where each $\Sigma_{t}= F_{t}(\mathbb{S}^{n-1}) \subset (M^{n},g)$ is totally umbilical. Then the induced metric $\sigma(p,t)$ on the slice $\Sigma_{t}$ is

\begin{equation}
    \sigma(p,t)= r_{0}^{2} e^{\frac{2t}{n-1}} g_{S^{n-1}},
\end{equation}
where $g_{S^{n-1}}$ is the standard metric on the sphere and $r_{0} = \left( \frac{|\Sigma|}{w_{n-1}} \right)^{\frac{1}{n-1}}$.
\end{theorem}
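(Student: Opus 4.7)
My approach is to combine the self-similar evolution of the induced metric established in the preceding theorem with the blow-down of smooth IMCF at the asymptotic end. Self-similarity will show that an appropriately rescaled induced metric is independent of $t$, while the blow-down will force that fixed rescaled metric to agree with $g_{S^{n-1}}$. Together these pin down $\sigma(\theta,0)$ to be the round metric of area radius $r_0$.

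From equation \eqref{self_similar} we have $\sigma(\theta, t) = e^{\frac{2t}{n-1}} \sigma(\theta, 0)$, so $|\Sigma_t| = e^t |\Sigma_0|$ and the area radius satisfies $r(t) := \left(|\Sigma_t|/w_{n-1}\right)^{1/(n-1)} = e^{t/(n-1)} r_0$. In particular, the rescaled metric $r(t)^{-2}\sigma(\theta, t) = r_0^{-2} \sigma(\theta, 0)$ is a fixed metric on $\mathbb{S}^{n-1}$, independent of $t$. Next, I would parametrize $\Sigma_t$ via the asymptotically flat chart $x: \mathbb{R}^n \setminus B_1(0) \to M \setminus K$, which is possible for $t$ sufficiently large because $r(t) \to \infty$ forces the smooth foliation into the chart. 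I would then invoke the Huisken--Ilmanen blow-down lemma (Lemma 7.1 of \cite{HI}) to conclude that the rescaled coordinate images $r(t)^{-1} \, x \circ X_t: \mathbb{S}^{n-1} \to \mathbb{R}^n$ converge in $C^1$ to the inclusion of the unit Euclidean sphere. Pulling back along this convergence, and using $g = \delta + o_1(1)$, yields $r(t)^{-2} \sigma(\theta, t) \to g_{S^{n-1}}$ as $t \to \infty$. Since the left-hand side is $t$-independent, we conclude $\sigma(\theta, 0) = r_0^2 g_{S^{n-1}}$, and the formula for general $t$ follows from self-similarity.

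The main technical point is verifying that the Huisken--Ilmanen blow-down convergence survives under the weak $o_1(1)$ asymptotics \eqref{af} rather than the stronger pointwise power-law decay used in more standard definitions of asymptotic flatness. The authors essentially assert this in their appendix discussion of Theorem \ref{Mccormick}, where Lemma 7.1 of \cite{HI} is invoked under precisely this hypothesis to evaluate the limit of the monotone quantity $Q(t)$; in particular, the blow-down argument is local in the asymptotic chart and does not use higher-order decay. Once the blow-down is in hand, the remainder of the argument is purely algebraic: a metric that is simultaneously scale-invariant under the IMCF and asymptotic after rescaling to $g_{S^{n-1}}$ must itself be $g_{S^{n-1}}$.
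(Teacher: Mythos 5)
Your overall strategy --- use the self-similar evolution \eqref{self_similar} to reduce to a fixed rescaled metric $r_0^{-2}\sigma_0$ on $\mathbb{S}^{n-1}$, then identify it with $g_{S^{n-1}}$ via a blow-down at infinity --- is the same as the paper's, but the central step is not justified as written. Lemma 7.1 of \cite{HI} (and the surrounding appendix discussion) yields convergence of the rescaled flow \emph{surfaces} $\widetilde{\Sigma}_t = r(t)^{-1}\Sigma_t$ to $\partial B_1(0)$ in $C^{1,\alpha}$ as submanifolds, i.e.\ locally graphical convergence of the sets with controlled tangent planes. It does \emph{not} by itself give $C^1$ convergence of the parametrized maps $\widetilde{\mathbf{X}}_t = r(t)^{-1}(x\circ X_t)\colon \mathbb{S}^{n-1}\to U$ to the inclusion of the unit sphere, which is what you invoke. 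These are different assertions: the rescaled surfaces can converge as subsets while the given flow parametrization degenerates, and pulling back the ambient metric to $\mathbb{S}^{n-1}$ to conclude $r_0^{-2}\sigma_0 = g_{S^{n-1}}$ requires uniform control of $d\widetilde{\mathbf{X}}_t$. The paper supplies this with a separate compactness argument: it computes the Hessian of the coordinate components $\widetilde{X}_i$ with respect to the fixed metric $r_0^{-2}\sigma_0$, identity \eqref{hessian}, where total umbilicity of $\Sigma_t$ is essential to reduce the second fundamental form term to a bounded multiple of $r_0^{-2}\sigma_0$, and the $o_1(1)$ asymptotics \eqref{af} are used to make the Christoffel term vanish in the rescaled limit; this gives uniform $C^{1,1}$ bounds, hence $C^{1,\alpha}$ subsequential convergence of the maps. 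You omit this entirely, and it is the part of the proof that actually uses the umbilicity hypothesis.

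There is a second, smaller omission: the subsequential limit $\mathbf{X}_\infty$ produced by Arzel\`a--Ascoli is only a $C^1$ isometric embedding of $(\mathbb{S}^{n-1}, r_0^{-2}\sigma_0)$ onto $(\partial B_1(0), g_{S^{n-1}})$, so one cannot simply ``pull back'' and declare $\sigma_0$ round as if the limit were smooth. The paper handles the low regularity by observing that conjugation by $\mathbf{X}_\infty$ identifies $\mathrm{Isom}(\partial B_1(0), g_{S^{n-1}})$ with $\mathrm{Isom}(\mathbb{S}^{n-1}, r_0^{-2}\sigma_0)$, forcing $r_0^{-2}\sigma_0$ to be homogeneous and isotropic and hence the round metric. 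Your proposal treats the identification as automatic.
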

\begin{remark}
We also showed this in the prequel, c.f. \cite{HW} Theorem 3.2. In that paper we assumed the standard notion of asymptotic flatness. The same proof holds under the decay condition \eqref{weakly_af}, but we include it here for the convenience of the reader.
\end{remark}

\begin{proof}
 Given $\Sigma_{t}$ umbilical, the first variation formula for the induced metric $\sigma(p,t)$ of $\Sigma_{t}$ becomes

\begin{eqnarray*}
    \frac{\partial}{\partial t} \sigma_{ab} &=& \frac{2}{H} h_{ab} = \frac{2}{H} \frac{H}{n-1} \sigma_{ab} = \frac{2}{n-1} \sigma_{ab}.
\end{eqnarray*}
Therefore, the induced metric evolves by scaling under \eqref{IMCF_flow} with

\begin{equation} \label{self_similar}
\sigma_{ab}(p,t) = e^{\frac{2t}{n-1}} \sigma_{ab} (p,0), \hspace{1cm} (p,0) \in \mathbb{S}^{n-1} \times (0,\infty).    
\end{equation}

Let $U= \mathbb{R}^{n} \setminus B_2(0)$ be the end of $(M^{n},g)$ with coordinates $(x^{1},\dots,x^{n})$ and the asymptotics \eqref{weakly_af}, and consider the blow-down embedding $\widetilde{F}_{t}$ and corresponding blow-down metric $g_{t}$

\begin{eqnarray}
   \widetilde{\mathbf{F}}_{t} &=& r(t)^{-1} \mathbf{F}_{t}: \mathbb{S}^{n-1} \rightarrow U \\
   g_{t}(\mathbf{x})&=& r(t)^{-2}g(r(t)\mathbf{x}), \hspace{2cm} x \in U. \label{blow_down2}
\end{eqnarray}
for embeddings $\mathbf{F}_{t}(p)= \left(F_{1}(p,t),F_{2}(p,t),\dots,F_{n}(p,t) \right) \in U$ satisfying \eqref{IMCF_flow} and for
\begin{equation} \label{r}
    r(t)= \left( \frac{|\Sigma_{0}|}{w_{n-1}} \right)^{\frac{1}{n-1}} e^{\frac{t}{n-1}}.
\end{equation}
%Call $\mathbf{x} = (x_{1},\dots,x_{n}) \in U$. For the re-scaled flow surfaces

%\begin{equation} \label{blow_down1}
 %   \widetilde{\Sigma}_{t} = \{ r(t)^{-1} \mathbf{x} \in U | \mathbf{x} \in \Sigma_{t} \}
%\end{equation}
%the total weak mean curvature $\tilde{H}$ of $\widetilde{\Sigma}_{t}$ taken with respect to the blow-down metric

By \eqref{self_similar}, $\widetilde{\mathbf{F}}_{t}: \left( \mathbb{S}^{n-1}, r_{0}^{-2} \sigma_{0} \right) \rightarrow (U,g_{t}(x))$ is an isometric embedding, where $\sigma_{0}$ abstractly denotes the induced metric of $\Sigma_{0}$. The Hessian of each component with respect to $r_{0}^{-2} \sigma_{0}$ may be decomposed as

\begin{equation} \label{hessian}
    \nabla^{2}_{r_{0}^{-2}\sigma_{0}} \widetilde{F}_{i} (p,t) = \nabla^{2}_{g_{t}} x_{i} - \nu_{t}(x_{i}) \widetilde{h}(p,t) = \Gamma^{i}_{jk}(t) - \frac{\nu_{t}(x_{i})}{n-1} \widetilde{H} r_{0}^{-2} \sigma(p,0). 
\end{equation}
Here, $\Gamma^{i}_{jk}(t)$ are the Christoffels symbol of the metric $g_{t}$ in coordinates $(x^{1},\dots,x^{n})$. Given that $g_{ij}(x) = \delta_{ij} + o_{1}(1)$, we have by scaling invariance of the Levi-Civita connection that

\begin{equation*}
    \Gamma^{i}_{jk}(t) = o(1), \hspace{1cm} i,j,k \in \{ 1, \dots, n \}. 
\end{equation*}
Meanwhile, an upper bound on the second term in \eqref{hessian} follows from $C^{1}$ convergence of $\widetilde{\Sigma}_{t}$ and the $L^{\infty}$ bound on $\widetilde{H}$, see Section 7 in \cite{HI}. Applying these to \eqref{hessian}, we find

\begin{equation*}
    |\nabla^{2}_{r_{0}^{-2} \sigma_{0}} \widetilde{F}_{i}(p,t) | \leq C, \hspace{1cm} i \in \{ 1, \dots, n \},
\end{equation*}
for a uniform constant $C$. This gives in local coordinates $(\theta_{1},\dots,\theta_{n-1}) \in \mathbb{S}^{n-1}$ that
\begin{equation*}
    |\frac{\partial}{\partial \theta_{i}} \widetilde{F}_{j}(p,t) - \frac{\partial}{\partial \theta_{i}} \widetilde{F}_{j}(q,t)| \leq C d_{r_{0}^{-2} \sigma_{0}} (p, q), \hspace{1cm} p,q \in \mathbb{S}^{n-1}. 
\end{equation*}
By the standard compact containment $C^{1,1}(\mathbb{S}^{n-1},r_{0}^{-2} \sigma_{0}) \subset C^{1,\alpha}(\mathbb{S}^{n-1}, r_{0}^{-2} \sigma_{0})$, $0 <\alpha <1$, c.f. \cite{Aub}, we may pass to a subsequence $t_{k} \rightarrow \infty$ so that for each $j \in \{ 1, \dots, n \}$ we have

\begin{equation} \label{limit_map}
    \widetilde{F}_{j} ( ;, t_{k}) \rightarrow F_{j} (;,\infty) \hspace{1cm} \text{ in    } C^{1,\alpha}(\mathbb{S}^{n-1}, r_{0}^{-2} \sigma_{0}).
\end{equation}
We consider the $C^{1}$ map $\mathbf{F}_{\infty}$ into $U$ defined by $\mathbf{F}_{\infty}(p)= (F_{1}(p,\infty), \dots, F_{n}(p,\infty))$. $\widetilde{\Sigma}_{t} \rightarrow \partial B_{1}(0)$ in $C^{1}$ as $t \rightarrow \infty$ by the blow-down lemma for IMCF (Lemma 7.1 in \cite{HI}), and so we have that $\lim_{k} |\widetilde{F}_{k}| = 1$ and $\mathbf{F}_{\infty}$ embeds $\mathbb{S}^{n-1}$ into $\partial B_{1}(0)$. Furthermore, since as $t \rightarrow \infty$ we have

\begin{eqnarray*}
    \frac{\partial}{\partial \theta_{j}} \widetilde{F}_{k}(p,t) &=& \frac{\partial}{\partial \theta_{j}} F_{k}(p,\infty) + o(1),  \\
    (g_{t})_{ij}(x) &=& \delta_{ij} + o_{1}(1),
\end{eqnarray*}
it follows that
\begin{equation*}
    \mathbf{F}_{\infty}: (\mathbb{S}^{n-1}, r_{0}^{-2} \sigma_{0}) \rightarrow \partial B_{1} (0) \subset (U,\delta)
\end{equation*}
is a $C^{1}$ isometric embedding. To complete the proof, we argue that $\sigma_{0}=r_{0}^{2} g_{S^{n-1}}$ (this is immediate if $\mathbf{X}_{\infty}$ is $C^{\infty}$). We have that the isometry group $\text{Isom}(\partial B_{1}(0), g_{S^{n-1}})$ is isomorphic to $\text{Isom}(\mathbb{S}^{n-1}, r_{0}^{-2} \sigma_{0})$ via conjugation with $F_{\infty}$, which implies that $\sigma_{0}$ is of constant curvature. The conclusion follows.

\end{proof}

\begin{cor} \label{cor}
Suppose that $\Sigma$ achieves equality in \eqref{Minkowski_static}. Let $u: (M^{n},g) \rightarrow \mathbb{R}$ be the proper weak IMCF of $\Sigma$, and let $K= \{ u \leq T_{0} \}$ for $T_{0}$ from Theorem \ref{imcf_smoothing}. Then there exists a diffeomorphism $F: \mathbb{S}^{n-1} \times (r_{0},\infty) \rightarrow (M^{n} \setminus K,g)$ such that the pull-back of $g$ equals \eqref{quasi-spherical}.
\end{cor}
\begin{proof}
Since $Q(t)=Q(0)=(n-1)w_{n-1}^{\frac{1}{n-1}}$ for all $t$, $(M^{n} \setminus K, g)$ is foliated by an umbilical solution $\Sigma_{t}$ of \eqref{IMCF}, we may express the metric as

\begin{equation*}
g= \frac{1}{H^{2}} dt^{2} + \sigma = \frac{1}{H^{2}} dt^{2} + r_{0}^{2}e^{\frac{2(t-T_{0})}{n-1}} g_{S^{n-1}}, \hspace{2cm} r_{0}= \left( \frac{|\Sigma_{T_{0}}|}{w_{n-1}} \right)^{\frac{1}{n-1}}.
\end{equation*}
We then make the transformation

\begin{eqnarray*}
r &=& r_{0} e^{\frac{t-t_{0}}{n-1}}, \\
u(r,\theta) &=& \frac{(n-1)r}{H}.
\end{eqnarray*}
\end{proof}

\section{Static equations in polar coordinates}\label{section:static equations in polar coordinates}
We now consider the static equations for a metric of the form \eqref{quasi-spherical}. For a coordinate system $\theta^a, a = 1, \cdots, n-1$ on $S^{n-1}$, we will refer to $(r,\theta^a)$ as polar coordinates. In our analysis of static equations for quasi-spherical metrics, Section \ref{section:static equations in polar coordinates} to Section \ref{section:proof of main theorem}, $\nabla$ and $\Delta$ would denote the Levi-Civita connection and Laplacian of the standard metric $\sigma = g_{S^{n-1}}$ on $S^{n-1}$.

Taking the trace of the first static equation in \eqref{static}, the second one is equivalent to 
\begin{align}
R(g) =0. \label{Rg=0}
\end{align}
We first derive the static equations in polar coordinates. 
\begin{prop} \label{derivation of static equations}
In polar coordinates, the zero scalar curvature equation $R(g)=0$ is given by 
\begin{align}\label{zero scalar curvature}
u^2 \Delta u - (n-1) r u_r+ \frac{(n-1)(n-2)}{2}(u-u^3) =0
\end{align}
and the Laplace equation $\Delta_g V =0$ is given by
\begin{align}
u^{-2} \lt( V_{rr} - u^{-1} u_r V_r - \frac{u}{r^2} \na^a u \na_a V \rt) + \frac{1}{r^2}\Delta V + \frac{n-1}{r} u^{-2} V_r =0.
\end{align} 
The $ra$-component and $ab$-component of the static equations are given by
\begin{align}\label{ra}
\pl_a V_r -u^{-1}u_a V_r - \frac{1}{r} \pl_a V &= \frac{n-2}{r} V u^{-1} \pl_a u
\end{align}
and
\begin{align}\label{ab}
\na_a \na_b V + r u^{-2}  V_r \sigma_{ab} &= V \lt[ -u^{-1} \na_a \na_b u + r u^{-3} u_r \sigma_{ab} + (n-2) (1 - u^{-2}) \sigma_{ab}\rt].
\end{align}
The combination of $rr$-component of the static equation and the zero scalar curvature equation gives
\begin{align}\label{rr}
V_{rr} - u^{-1} u_r V_r - \frac{u}{r^2} \na^a u \na_a V &= \frac{(n-1)(n-2)}{2r^2} (1-u^2) V.
\end{align}
\end{prop}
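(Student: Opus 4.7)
The plan is a direct coordinate computation, organized around the umbilical foliation by the coordinate spheres $\Sigma_r = \{r = \text{const}\}$. I would first compute the Christoffel symbols of $g = u^2 dr^2 + r^2\sigma$ from the Koszul formula; the nonzero symbols are
\begin{align*}
\Gamma^r_{rr} &= u^{-1}u_r, & \Gamma^r_{ra} &= u^{-1}u_a, & \Gamma^r_{ab} &= -ru^{-2}\sigma_{ab}, \\
\Gamma^a_{rr} &= ur^{-2}\nabla^a u, & \Gamma^a_{rb} &= r^{-1}\delta^a_b, & \Gamma^a_{bc} &= \tilde{\Gamma}^a_{bc},
\end{align*}
where $\tilde{\Gamma}^a_{bc}$ are the Christoffels of $\sigma$. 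Feeding these into $\text{Hess}(V)_{ij} = \partial_i\partial_j V - \Gamma^k_{ij}\partial_k V$ directly produces the left-hand sides of \eqref{ra}, \eqref{ab}, and of \eqref{rr}. Tracing $\text{Hess}(V)$ against $g^{rr}=u^{-2}$ and $g^{ab}=r^{-2}\sigma^{ab}$ then yields the stated form of the Laplace equation.

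For the Ricci tensor I would exploit that $\Sigma_r$ is umbilical: with unit outward normal $\nu = u^{-1}\partial_r$, a short calculation gives second fundamental form $h_{ab} = (r/u)\sigma_{ab}$, mean curvature $H = (n-1)/(ur)$, and $|h|^2 = (n-1)/(u^2r^2)$; the induced metric is $r^2\sigma$, with intrinsic scalar curvature $(n-1)(n-2)/r^2$. The standard evolution identity $\partial_r H = -\Delta_{\Sigma_r} u - u(|h|^2 + \text{Ric}_g(\nu,\nu))$, valid for a foliation with lapse $u$ and no shift, solves for
\begin{equation*}
\text{Ric}_g(\nu,\nu) = -\frac{\Delta u}{ur^2} + \frac{(n-1)u_r}{u^3 r}.
\end{equation*}
Feeding this into the trace of the Gauss equation $R_g = R_{\Sigma_r} + 2\,\text{Ric}_g(\nu,\nu) + |h|^2 - H^2$ and imposing $R_g=0$ produces \eqref{zero scalar curvature} after clearing denominators.

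The mixed Ricci component $\text{Ric}_g(\nu,\partial_a)$ follows from the traced Codazzi identity applied to the umbilical $h$, giving $\text{Ric}_g(\nu,\partial_a) = -(n-2)\nabla_a\lambda$ where $\lambda = 1/(ur)$. The tangential component $\text{Ric}_g(\partial_a,\partial_b)$ is obtained either from a direct Christoffel computation of $R^r_{\ arb}$ or from a Riccati-type identity for $\partial_r h_{ab}$ combined with the tangential trace of the Gauss equation; the net outcome is
\begin{equation*}
\text{Ric}_g(\partial_a,\partial_b) = -u^{-1}\nabla_a\nabla_b u + r u^{-3} u_r \,\sigma_{ab} + (n-2)(1-u^{-2})\sigma_{ab}.
\end{equation*}
Substituting these into the corresponding components of $\text{Hess}_g V = V\,\text{Ric}_g$ produces \eqref{ra} and \eqref{ab}, after the rescaling $\partial_r = u\nu$ in the mixed component. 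Finally, for \eqref{rr}, combine $\text{Hess}(V)_{rr} = V R_{rr}$ with the expression for $\text{Ric}_g(\nu,\nu)$ and use \eqref{zero scalar curvature} to eliminate $u^2\Delta u$; the $u_r$ contributions cancel and the right-hand side collapses to $\frac{(n-1)(n-2)}{2r^2}(1-u^2)V$.

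The main bookkeeping hurdle is the tangential component $\text{Ric}_g(\partial_a,\partial_b)$, since $u$ depends on both $r$ and the sphere directions so that a brute-force Christoffel expansion proliferates terms. Exploiting the umbilicity of the foliation, which causes the $h\wedge h$ contraction in the Gauss equation to simplify cleanly and forces the intrinsic Ricci of $\Sigma_r$ to enter as a constant multiple of $\sigma$, keeps the algebra tractable and makes the structure of \eqref{ab} transparent.
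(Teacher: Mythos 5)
Your route is correct but genuinely different from the paper's. The paper imports Bartnik's explicit formulas \cite[(2.20)--(2.22)]{B} for the full Riemann tensor (noting and correcting a sign error in one of them in a footnote), contracts twice to get Ricci and scalar curvature, and then assembles the Hessian from the covariant derivative formulas. You instead reconstruct the Ricci tensor purely from the submanifold geometry of the umbilical foliation $\{\Sigma_r\}$: the pure-trace second fundamental form $h_{ab}=(r/u)\sigma_{ab}$ combined with the Riccati identity $\pl_r H = -\Delta_{\Sigma_r}u - u(|h|^2+\text{Ric}(\nu,\nu))$ gives $\text{Ric}(\nu,\nu)$, the traced Codazzi equation for an umbilical hypersurface gives $\text{Ric}(\nu,\pl_a)=-(n-2)\na_a\lambda$ with $\lambda=1/(ur)$, and the twice-contracted Gauss equation closes the system for the scalar curvature. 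This is self-contained (no appeal to imported Riemann-tensor formulas), avoids Bartnik's sign error entirely, and makes visible exactly which geometric features of the coordinate spheres---umbilicity, constancy of $R_{\Sigma_r}$---are responsible for the clean final form. The paper's version is shorter because it leans on the reference. The one remaining gap is your tangential component $\text{Ric}(\pl_a,\pl_b)$: you correctly identify this as the main hurdle and sketch two ways to obtain it, but you do not carry either through, whereas the paper records the formula directly from \cite{B}. That said, your stated formula agrees with the paper's, so this is a matter of execution rather than a wrong idea.

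One concrete error worth fixing: the Koszul formula gives
\begin{align*}
\Gamma^a_{rr} = \frac{1}{2}g^{ab}\bigl(2\pl_r g_{br} - \pl_b g_{rr}\bigr) = -\frac{u}{r^2}\na^a u,
\end{align*}
so your $\Gamma^a_{rr}$ is off by a sign; the paper's own formula $D_{\pl_r}\pl_r = u^{-1}u_r\pl_r - \frac{u}{r^2}\na^a u\,\pl_a$ has the correct one. Carrying the correct sign through the Hessian gives $\text{Hess}(V)_{rr} = V_{rr} - u^{-1}u_r V_r + \frac{u}{r^2}\na^a u\,\na_a V$, with a plus on the last term, and likewise a plus in the $g^{rr}$ piece of $\Delta_g V$. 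This actually disagrees with the sign shown in the proposition's statement of \eqref{rr} and of the Laplace equation, and is consistent with the paper's own covariant derivative formula just quoted; so the proposition as printed appears to carry a minor sign typo on this one cross term. Your sign error on $\Gamma^a_{rr}$ happens to cancel it. None of this matters downstream, since that term enters every subsequent asymptotic estimate only through its magnitude, but if you present this proof you should use the correct Christoffel symbol and flag the discrepancy with the printed equation rather than match it.
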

\begin{proof}
Let $\nu $ denote unit vector $u^{-1} \pl_r$ (the unit normal of the level sets of $r$). Setting $\beta=0$ in Bartnik \cite[(2.20)-(2.22)]{B}, we have the Riemann curvature tensors\footnote{Bartnik made a sign error in the formula of $\langle R(\pl_a,\pl_b)\pl_c, \nu \rangle$. }
\begin{align*}
\langle R(\pl_a,\pl_b)\pl_c, \pl_d \rangle &= r^2 (-1+ u^{-2}) \lt( \sigma_{ac}\sigma_{bd} - \sigma_{ad}\sigma_{bc} \rt) \\ 
\langle R(\pl_a,\pl_b)\pl_c, \nu \rangle &= ru^{-2} \lt( -u_b \sigma_{ac} + u_a \sigma_{bc} \rt) \\
\langle R(\pl_a,\nu)\pl_b,\nu \rangle &= u^{-1} \na_a\na_b u - u^{-3} r u_r \sigma_{ab}.
\end{align*}
The Ricci curvature $R(X,Y) = \sum_{i=1}^n \langle R(X,e_i)e_i, Y \rangle$ is given by
\begin{align*}
R(\pl_a,\pl_b) &= -u^{-1}\na_a\na_b u + r u^{-3}u_r \sigma_{ab} + (n-2)(1-u^{-2})\sigma_{ab}\\
R(\pl_a,\nu) &= \frac{n-2}{r}u^{-2} u_a\\
R(\nu,\nu) &= - \frac{1}{r^2} u^{-1} \Delta u +  \frac{n-1}{r} u^{-3}u_r.
\end{align*}
The scalar curvature is given by
\begin{align*}
R(g) = \frac{1}{r^2} \lt( 2 u^{-1} \Delta u + 2(n-1)r u^{-3} u_r + (n-1)(n-2)(1 - u^{-2}) \rt)
\end{align*}
and \eqref{zero scalar curvature} follows.

We proceed to compute the Hessian of $V$. We have
\begin{align*}
D_{\pl_a} \nu &= \frac{1}{r}u^{-1} \pl_a\\
D_{\pl_a} \pl_b &= \Gamma_{ab}^c \pl_c - u^{-1}r \sigma_{ab} \nu\\
D_{\pl_r} \pl_r &= u^{-1}u_r \pl_r - \frac{1}{r^2} u \na^a u \pl_a
\end{align*}
where $\Gamma_{ab}^c$ are the Christoffel symbols of $\sigma$. We immediately get \eqref{ra} and \eqref{ab}. \eqref{rr} follows by taking \eqref{zero scalar curvature} into account.
\end{proof}

In the rest of this section, we present definitions and elementary facts for spherical harmonics on $S^{n-1}$.
\begin{defn}\label{spherical harmonics}
The eigenvalues of the Laplace operator $\Delta$ on $S^{n-1}$, $\Delta f = - \lambda f$, are
\begin{align*}
\lambda = \ell (\ell + n -2), \ell \in \mathbb{N}_{\ge 0}.
\end{align*}

Let $\mathcal{H}_{k}$ ($\mathcal{H}_{\ell \le k}$ resp.) denote the eigenspace of the eigenvalue $k(k+n-2)$ (the direct sum of eigenspaces of eigenvalue $\le k(k+n-2)$ resp.).  Eigenfunctions in $\mathcal{H}_{k}$ are said to be of {\it mode $k$}. Note that $\mathcal{H}_k$ form an orthogonal decomposition of $L^2(S^{n-1}) = \oplus_{k=0}^\infty \mathcal{H}_k$. Let $f_{\ell = k}$ or $f_k$ denote the projection of $f = \sum_{k=0}^\infty f_k$ to $\mathcal{H}_k$.
\end{defn}

\begin{prop}\label{characterizing Hle1}
Let $f$ be a $C^2$ function on $S^{n-1}$ satisfying $\na_a\na_b f = g \sigma_{ab}$ for some function $g$. Then $f$ is smooth and lies in $\mathcal{H}_{\ell\le 1}$.
\end{prop}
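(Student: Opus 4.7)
The plan is to reduce the overdetermined hypothesis to a single scalar elliptic PDE on $f$. Tracing $\na_a \na_b f = g \sigma_{ab}$ with $\sigma^{ab}$ on $S^{n-1}$ immediately gives $\Delta f = (n-1) g$, so $g = \Delta f/(n-1)$ is determined by $f$ and lies in $C^0$; the hypothesis is therefore equivalent to the vanishing of the trace-free Hessian of $f$. My goal is to upgrade this to the statement that $\Delta f + (n-1) f$ is constant, from which elliptic regularity and the eigenvalue structure of $\Delta$ on $S^{n-1}$ will deliver both smoothness and the spectral conclusion.

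To derive this while only assuming $f \in C^2$, I would pair the Hessian identity against $\na^a \na^b \psi$ for an arbitrary $\psi \in C^\infty(S^{n-1})$. Integrating by parts twice on the closed manifold $S^{n-1}$ (so no boundary terms appear), and using the standard Bochner commutator on the round sphere
\begin{equation*}
\Delta(\na_b \psi) = \na_b(\Delta \psi) + (n-2) \na_b \psi,
\end{equation*}
together with $\sigma^{ab}\na_a \na_b \psi = \Delta\psi$ on the right-hand side, I expect to obtain
\begin{equation*}
\int_{S^{n-1}} f \bigl[ \Delta^2 \psi + (n-1) \Delta \psi \bigr] d\sigma = 0 \quad \text{for all } \psi \in C^\infty(S^{n-1}).
\end{equation*}
This is exactly the distributional identity $\Delta[\Delta f + (n-1) f] = 0$. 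Since $\Delta f + (n-1) f$ is continuous and distributionally harmonic on the closed connected manifold $S^{n-1}$, Weyl's lemma forces it to be a constant $c \in \R$, and the resulting classical equation $\Delta f = c - (n-1) f$ is elliptic, so standard bootstrapping promotes $f$ to $C^\infty$.

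Once $f$ is smooth, setting $\tilde f := f - c/(n-1)$ yields $\Delta \tilde f = -(n-1) \tilde f$. By Definition \ref{spherical harmonics}, the eigenvalue $n-1 = \ell(\ell + n - 2)$ occurs only at $\ell = 1$, so $\tilde f \in \mathcal{H}_1$ and hence $f = \tilde f + c/(n-1) \in \mathcal{H}_0 \oplus \mathcal{H}_1 = \mathcal{H}_{\ell \le 1}$, as claimed. The main obstacle is the weak derivation in the middle step: a naive argument taking the divergence of $\na_a \na_b f = g \sigma_{ab}$ would require $f \in C^3$, which we do not assume, so the chain $\Delta(\Delta f + (n-1)f) = 0$ must be produced purely through the distributional pairing with $\na^a \na^b \psi$, with the only Riemannian input being the Bochner identity on $S^{n-1}$ and compactness.
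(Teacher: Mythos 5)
Your proof is correct and follows essentially the same route as the paper: trace to identify $g$, pair against $\na^a\na^b\psi$ and integrate by parts (with the Bochner commutator on the round sphere supplying the $(n-2)$ shift) to arrive at the weak harmonicity of $(\Delta + n-1)f$, then elliptic regularity/Weyl plus the maximum principle to get constancy, and finally the spectral identification of the eigenvalue $n-1$ with $\ell=1$. The only difference is presentational: you spell out the Bochner identity and the distributional bookkeeping that the paper leaves implicit under ``integrating by parts.''
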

\begin{proof}
Taking the trace, we get $g = \frac{\Delta f}{n-1}$. For any smooth function $\zeta$, multiplying the equation by $\na^a\na^b \zeta$ and integrating by parts yields $\int_{S^{n-1}} (\Delta + n-1)f \cdot \Delta\zeta  =0$. Indeed, using Ricci identity, we have $\int \na_a\na_b f \na^a \na^b \zeta =  \int  -\na^b\na_a\na_b f \na^a \zeta = \int [-\na_a \Delta f - (n-2)\na_a f] \na^a \zeta = \int [(\Delta f + (n-2)f)]\Delta \zeta$.

Therefore $(\Delta+n-1)f$ is weakly harmonic; elliptic regularity implies that it is smooth and the maximum principle says it is constant. It follows that $f$ is smooth and lies in $\mathcal{H}_{\ell\le 1}$.
\end{proof}
\begin{prop}\label{product of first eigenfunctions}
If $f,g \in \mathcal{H}_{\ell\le 1}$, then $(\na f \cdot \na g)_{\ell=2} = - (fg)_{\ell=2}$.
\end{prop}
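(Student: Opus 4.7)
The plan is to exploit the Leibniz rule for the Laplacian together with the fact that $\Delta$ acts as multiplication by $-\lambda_\ell = -\ell(\ell+n-2)$ on $\mathcal{H}_\ell$, so that one can read off relations between spherical-harmonic components of products of eigenfunctions simply by taking Laplacians.

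First I would reduce to the case $f,g \in \mathcal{H}_1$. Both sides of the claimed identity are bilinear and symmetric in $(f,g)$, and both vanish whenever one of the factors lies in $\mathcal{H}_0$: indeed if $f$ is constant then $\nabla f = 0$, while $fg \in \mathcal{H}_{\ell\le 1}$, so the $\ell=2$ projection of each side is zero. Decomposing $f = f_0 + f_1$, $g = g_0 + g_1$ with $f_0,g_0 \in \mathcal{H}_0$ and $f_1,g_1 \in \mathcal{H}_1$ therefore reduces the problem to showing $(\nabla f \cdot \nabla g)_{\ell=2} = -(fg)_{\ell=2}$ under the assumption $f,g \in \mathcal{H}_1$.

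In that case $\Delta f = -(n-1)f$ and $\Delta g = -(n-1)g$, so the Leibniz rule $\Delta(fg) = f\Delta g + g\Delta f + 2\,\nabla f \cdot \nabla g$ becomes
\[
\Delta(fg) = -2(n-1)\,fg + 2\,\nabla f \cdot \nabla g.
\]
Projecting both sides to $\mathcal{H}_2$ and using $\lambda_2 = 2(2+n-2) = 2n$ gives
\[
-2n\,(fg)_{\ell=2} = -2(n-1)\,(fg)_{\ell=2} + 2\,(\nabla f \cdot \nabla g)_{\ell=2},
\]
and solving for $(\nabla f \cdot \nabla g)_{\ell=2}$ yields the claimed identity.

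I do not anticipate any real obstacle. The only point to verify is that projection to $\mathcal{H}_2$ commutes with $\Delta$, which is immediate since $\mathcal{H}_2$ is precisely the eigenspace of the self-adjoint operator $\Delta$ on $L^2(S^{n-1})$ with eigenvalue $-2n$ and the eigenspaces are mutually $L^2$-orthogonal.
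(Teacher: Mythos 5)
Your proof is correct and follows essentially the same line as the paper's: both exploit the gap between the $\Delta$-eigenvalues $-(n-1)$ on $\mathcal{H}_1$ and $-2n$ on $\mathcal{H}_2$ via the Leibniz/integration-by-parts identity. The paper packages this as integrating $|\nabla f|^2$ against an $\mathcal{H}_2$-eigenfunction and then polarizing, whereas you apply the bilinear Leibniz rule to $\Delta(fg)$ and project directly, which is a marginally more direct presentation of the same computation.
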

\begin{proof}
Let $\vphi$ be an eigenfunction in $\mathcal{H}_{\ell=2}.$ Integration by parts yields 
\begin{align*}
\int_{S^{n-1}} |\na f|^2 \vphi = \int_{S^{n-1}} - f \Delta f \vphi - \frac{1}{2} (\na f^2) \cdot \na\vphi = \int_{S^{n-1}} (n-1)f^2 \vphi - n f^2 \vphi
\end{align*}
and the assertion follows by polarization.
\end{proof}

We recall the basis of $\mathcal{H}_{\ell \le 1}$ and $\mathcal{H}_{2}.$ Let $x^i, i = 1, \ldots, n$ be the coordinate functions of $\R^n$ and $X^i = x^i \big|_{S^{n-1}}$. We have that
\begin{enumerate}
\item $\{ X^i: i =1, \ldots, n \}$ together with constant function $1$ form an orthogonal basis of $\mathcal{H}_{\ell\le 1}$.
\item $\mathcal{H}_2$ is spanned by $\{ X^iX^j - \frac{1}{n} \delta^{ij}: 1 \le i \le j \le n\}$; $\dim \mathcal{H}_2 = \frac{n(n+1)}{2}-1$: for any $n \times n$ symmetric matrix $A$, $\sum_{i,j=1}^n A_{ij} (X^iX^j - \frac{1}{n}\delta^{ij}) =0$ if and only if $A$ is a multiple of the identity matrix. 
\end{enumerate}
As a result, we have
\begin{prop}\label{udot_square_l=2}
If $\lt( a_0 + \sum_{i=1}^n a_i X^i \rt)^2_{\ell=2} =0$, then $a_i=0$ for $i = 1, \ldots, n$.
\end{prop}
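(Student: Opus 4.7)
The plan is to expand the square and read off the $\ell=2$ mode using the characterization of $\mathcal{H}_2$ recalled just before the statement. Writing $f = a_0 + \sum_{i=1}^n a_i X^i$, I would compute
\[
f^2 = a_0^2 + 2a_0 \sum_{i=1}^n a_i X^i + \sum_{i,j=1}^n a_i a_j X^i X^j.
\]
The first two terms lie in $\mathcal{H}_{\ell\le 1}$ and therefore contribute nothing to the $\ell=2$ projection. For the third, I would insert the decomposition
\[
X^i X^j = \lt( X^i X^j - \tfrac{1}{n}\delta^{ij} \rt) + \tfrac{1}{n}\delta^{ij},
\]
whose first summand lies in $\mathcal{H}_2$ and whose second summand is a constant. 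Hence
\[
(f^2)_{\ell=2} = \sum_{i,j=1}^n a_i a_j \lt( X^i X^j - \tfrac{1}{n}\delta^{ij} \rt).
\]

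Now I would invoke item (2) in the basis description of $\mathcal{H}_{\le 2}$: the left-hand side vanishes if and only if the symmetric matrix $A_{ij} := a_i a_j$ is a multiple of the identity $I_n$. The key observation is that $A = aa^\top$ has rank at most one, while a nonzero multiple of $I_n$ has rank $n \ge 1$; matching ranks forces $A = 0$, i.e.\ $a_i a_j = 0$ for all $i,j$. In particular, taking $i=j$ gives $a_i^2 = 0$ for each $i = 1, \ldots, n$, so $a_i = 0$ as claimed. (Note that $a_0$ is unconstrained, which is consistent with the statement.)

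There is essentially no obstacle here: the proposition is a short algebraic consequence of the basis description of $\mathcal{H}_2$ recalled immediately above. The only point deserving a line of explanation is the rank argument showing that a rank-one symmetric matrix which is a scalar multiple of the identity must vanish.
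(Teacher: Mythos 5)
Your proof is correct and is precisely the argument the paper intends. The paper gives no explicit proof of this proposition, stating it as an immediate consequence of the basis description of $\mathcal{H}_1$ and $\mathcal{H}_2$ recalled just above; you simply fill in those steps: expand the square, discard the $a_0^2$ and $2a_0\sum a_iX^i$ terms as lying in $\mathcal{H}_{\ell\le 1}$, identify the $\ell=2$ projection of $\sum a_ia_jX^iX^j$ with $\sum a_ia_j\bigl(X^iX^j-\tfrac1n\delta^{ij}\bigr)$, and then invoke item (2) to conclude that $aa^{\top}$ is a multiple of the identity. The rank observation is a clean way to finish (for $n\ge 2$, as is the case throughout the paper since $n\ge 3$); equivalently one can note that $a_ia_j=c\,\delta_{ij}$ gives $a_ia_j=0$ for $i\ne j$ and $a_i^2=c$ for all $i$, which forces $c=0$. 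Either way the conclusion $a_i=0$, with $a_0$ unconstrained, follows, matching the statement exactly.
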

\begin{proof}
We compute
\[ (a_0 + \sum_{i=1}^n a_i X^i)^2 = (a_0^2 + \frac{1}{n} \sum_{i=1}^n a_i^2) + 2 a_0 \sum_{i=1}^n a_i X^i + \sum_{i,j=1}^n a_i a_j (X^iX^j - \frac{1}{n}\delta^{ij}). \]
 By point 2 above, $\lt( a_0 + \sum_{i=1}^n a_i X^i \rt)^2_{\ell=2} =0$ implies that $a_ia_j = c \delta_{ij}$ and thus $a_i=0$.
\end{proof}
Finally, we specify the big-O notation used in the rest of the paper.
\begin{defn}
Writing $f \in O_k (g(r))$ means for all $i,j \ge 0,$ $i+j \le k,$ and $r \ge r_0$, there are constants $C_{i,j}$ such that
\begin{align*}
|(r\pl_r)^i \na^j f| \le C_{i,j} g(r).
\end{align*}
Also define $O_\infty(g) = \cap_{k=1}^\infty O_k(g)$. $f \in o_k (g(r))$ is defined similarly with
\[ \lim_{r\rw\infty} \frac{|(r\pl_r)^i \na^j f|}{g(r)} =0.\]
\end{defn}

\section{The asymptotics of the metric coefficient $u$}\label{section:asymptotics of the lapse}
\subsection{3-dimensional case}
In Section 5 of \cite{B}, Bartnik analyzed the zero scalar curvature equation \eqref{zero scalar curvature} in 3-dimension.  
\begin{theorem}\cite[Theorem 5.1]{B}\label{Bartnik}
If the metric $g = u^2(r,\theta)dr^2 + r^2 g_{S^2}$ has zero scalar curvature, then there is a constant $m$ and $\dot u \in \mathcal{H}_{\ell=1}$ such that\footnote{$m$ is denoted by $m_0$ and $\dot u$ is denoted by $\vphi$ in \cite{B}.} 
\[ u = 1 + m r^{-1} + (\dot u+ \frac{3}{2}m^2) r^{-2} + \ddot u r^{-3} + O_{\infty}(r^{-4}\log r) \]
where $\ddot u \in \mathcal{H}_{\ell\le 1}$ is determined by $m$ and $\dot u$.

\end{theorem}

We obtain a further expansion following the treatment on page 69-70 of \cite{B}.
\begin{theorem}\label{u_3}
Under the assumptions of Theorem \ref{Bartnik}, there are $\hat u, \dddot{u} \in \mathcal{H}_{\ell\le 2}$ such that 
\begin{align*}
u = 1 + m r^{-1} + (\dot u+ \frac{3}{2}m^2) r^{-2} + \ddot{u} r^{-3} + \hat u r^{-4}\log r + \dddot{u} r^{-4} + O_\infty(r^{-5}\log r)
\end{align*}
where \begin{align}\label{hat u l=2} \hat u_{\ell=2} = - \frac{7}{2}(\dot u^2)_{\ell=2} 
\end{align} and $\hat{u}_{\ell\le 1}, \dddot u_{\ell \le 1}$ are both determined by $m$ and $\dot u$.
\end{theorem}
\begin{proof}
Plug $u = 1 + m r^{-1} + \dot u r^{-2} + \ddot u r^{-3} +f$ into \eqref{zero scalar curvature}, we get
\begin{align}\label{temp1}
\Delta f - 2r f_r - 2f + r^{-4} \lt( 2\dot u \Delta \dot u - 3\dot u^2 + \mathcal{H}_{\ell\le 1}\rt) = O_\infty(r^{-5}\log r)
\end{align}
where the term $\mathcal{H}_{\ell\le 1}$ involves only $m$ and $\dot u$.

Let $\vphi$ be an eigenfunction in $\mathcal{H}_{\ell=2}$ and $M(r) = \int_{S^2} f\vphi$. Since $\Delta \dot u = -2\dot u$ and $\int_{S^2} \Delta f \vphi = \int_{S^2} f \Delta\vphi = -6 \int_{S^2} f\vphi$, after multiplying \eqref{temp1} by $\vphi$, we get
\begin{align*}
2r M' + 8 M = r^{-4}  N + O(r^{-5}\log r)
\end{align*}
where $N = -\int_{S^2} 7\dot  u^2 \vphi$. Integrating this ODE from $r$ to $\infty$, we get
\begin{align*}
r^4 M(r) = c + \frac{N}{2} \log r + O(r^{-1}\log r)
\end{align*}
where $c$ is an arbitrary constant. This proves \eqref{hat u l=2} and the fact $\dddot u_{\ell=2}$ is arbitrary. 

Multiplying \eqref{temp1} by an eigenfunction $\psi$ of mode $\ell \ge 3$, we see that $N(r) = \int_{S^2} f\psi$ satisfies 
\[ -2r N' - (\ell (\ell+1) + 2) N = O(r^{-5}\log r). \] Integration of this ODE yields $u_{\ell \ge 3} = O(r^{-5}\log r)$. The assertion of $\hat u_{\ell\le 1}$ and $\dddot u_{\ell\le 1}$ follows similarly. We conclude that $\hat u, \dddot u \in \mathcal{H}_{\ell\le 2}$ and there is no other term before $r^{-5}\log r$ in the expansion of $u$.
\end{proof}
\subsection{Higher-dimensional case}
In \cite{ST}, Shi-Tam generalized Barnik's result to general dimensions and a wider class of metrics. A special case is  
\begin{theorem}\cite[Lemma 2.8]{ST}\label{ShiTam}
In dimension $n\ge 3$, if the metric $u^2 dr^2 + r^2 g_{S^{n-1}}$ has zero scalar curvature, then 
\[ u = 1 + mr^{2-n} + O_1(r^{1-n}) \]
where $m$ is a constant.
\end{theorem}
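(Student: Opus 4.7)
The approach is to convert the zero scalar curvature equation \eqref{zero scalar curvature} into a quasilinear parabolic equation on $S^{n-1}$ for the generalized mass aspect
\[
\mu(r,\theta) := \tfrac{r^{n-2}}{2}\bigl(1 - u^{-2}\bigr),
\]
with $s := \log r$ acting as the time variable. Substituting $u^{-2} = 1 - 2\mu r^{2-n}$ into \eqref{zero scalar curvature} and using the identities $u - u^3 = -2u^3\mu r^{2-n}$ and $ru_r = u^3 r^{2-n}(r\mu_r - (n-2)\mu)$, a direct calculation yields
\[
(n-1)\,\mu_s \;=\; u^2\,\Delta\mu + 3u^4 e^{-(n-2)s}\,|\na\mu|^2,
\]
a uniformly parabolic equation whose nonlinear gradient term is damped by the coefficient $e^{-(n-2)s}$ since $n \ge 3$.

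The first step is a maximum principle. Evaluating the equation at a spatial extremum yields $\pl_s \max_\theta \mu \le 0$ and $\pl_s \min_\theta \mu \ge 0$; since $u \in C^\infty_+$ is bounded and positive on the initial slice, $\mu$ is uniformly bounded, and hence $u^2 \rw 1$ as $r\rw\infty$. For the sharp decay rate, decompose $\mu = \bar\mu(s) + \tilde\mu(s,\theta)$ into spherical mean and mean-free part, and expand $\tilde\mu = \sum_{\ell\ge 1} \mu_\ell$ in spherical harmonics via Definition \ref{spherical harmonics}. The linearization $(n-1)\pl_s \mu_\ell \approx -\lambda_\ell \mu_\ell$ with $\lambda_\ell = \ell(\ell+n-2)\ge n-1$ predicts $\mu_\ell = O(e^{-s}) = O(r^{-1})$, the slowest rate coming from the $\ell = 1$ modes. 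A Gronwall estimate using the a priori bound on $\mu$ and the damped prefactor $e^{-(n-2)s}$ makes this rigorous and yields $|\tilde\mu| \le C r^{-1}$. Integrating the equation over $S^{n-1}$ and using $\na u = u^3 r^{2-n}\na\mu$ to integrate $u^2 \Delta\mu$ by parts gives
\[
(n-1)\bar\mu_s \;=\; \frac{1}{w_{n-1}}\int_{S^{n-1}} u^4 e^{-(n-2)s}\,|\na\mu|^2 \,d\sigma \;=\; O(r^{-n}),
\]
so $\bar\mu$ converges to a constant $m$ at the faster rate $O(r^{-n})$. Parabolic Schauder estimates then upgrade the pointwise bound to $|\na\tilde\mu| + |\pl_s\tilde\mu| = O(r^{-1})$.

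Translating back, $u^{-2} = 1 - 2m r^{2-n} + O(r^{1-n})$, and a Taylor expansion (the cross term $r^{2(2-n)}$ is absorbed into $O(r^{1-n})$ since $n \ge 3$) gives $u = 1 + m r^{2-n} + O(r^{1-n})$. The corresponding derivative bounds follow from $u_r = u^3\bigl(\mu_r r^{2-n} - (n-2)\mu r^{1-n}\bigr)$, $\na u = u^3 r^{2-n}\na\mu$, and the Schauder bounds on $\mu$. The main technical obstacle is rigorizing the mode-by-mode linear decay: one must carry the $L^\infty$ bound on $\mu$ through a Gronwall argument while carefully controlling the nonlinear $|\na\mu|^2$ correction via its $e^{-(n-2)s}$ damping so that the sharp $r^{-1}$ oscillation rate of $\tilde\mu$ is preserved.
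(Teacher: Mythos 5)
The paper does not supply a proof of this statement: it is imported verbatim from Shi--Tam \cite[Lemma~2.8]{ST} (with the $n=3$ case going back to Bartnik \cite{B}), and the paper only adds to it via Theorem~4.5 where the remainder is upgraded from $O_1$ to $O_\infty$ by Schauder estimates. So there is no ``paper's proof'' to compare against. What you have written is, in substance, the standard Bartnik/Shi--Tam argument: introduce the mass-aspect function $\mu=\tfrac{r^{n-2}}{2}(1-u^{-2})$, rewrite the zero-scalar-curvature equation as a quasilinear parabolic PDE in $s=\log r$, run the maximum principle to bound $\mu$, and then extract decay of the oscillation via the spectral gap on $S^{n-1}$. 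Your derived PDE $(n-1)\mu_s = u^2\Delta\mu + 3u^4 e^{-(n-2)s}|\na\mu|^2$ and the averaged identity for $\bar\mu_s$ are correct (I checked both), and the strategy of decomposing $\mu=\bar\mu+\tilde\mu$ and using the $\ell\ge1$ eigenvalues $\ell(\ell+n-2)\ge n-1$ to predict $\tilde\mu=O(e^{-s})$ is exactly right.

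The one place where the sketch as written does not yet close is the Gronwall step when $n=3$. There the ``damping'' factor $e^{-(n-2)s}=e^{-s}$ has \emph{the same} rate as the target $\tilde\mu_1=O(e^{-s})$, so a single Gronwall pass using only the a~priori bound $\|\na\mu\|_{L^\infty}\le C$ gives a forcing of size $O(e^{-s})$ and hence $\tilde\mu_1=O(se^{-s})=O(r^{-1}\log r)$, a logarithmic loss that the statement does not allow. Similarly, the cross term $(u^2-1)\Delta\tilde\mu=O(e^{-s})\cdot O(1)$ produces the same rate. To remove the log you must bootstrap: feed the first-pass bound $\tilde\mu=O(se^{-s})$ together with parabolic Schauder estimates to get $\na\tilde\mu,\Delta\tilde\mu=O(se^{-s})$; then the nonlinear forcing improves to $e^{-s}|\na\mu|^2=O(s^2 e^{-3s})$ and the cross term to $O(se^{-2s})$, and a second Gronwall pass yields the clean $\tilde\mu_1=O(e^{-s})$. (Alternatively one can observe, as Bartnik does in his mode analysis, that $|\na f|^2$ for $f\in\mathcal{H}_{\ell\le1}$ has no $\ell=1$ component by Proposition~\ref{product of first eigenfunctions}, so the leading nonlinear contribution never hits the critical mode.) For $n\ge4$ the crude estimate already suffices since $e^{-(n-2)s}$ beats $e^{-s}$. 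If you make this bootstrap explicit, the argument is a clean and self-contained proof of the statement.
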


We first improve the differentiability of the remainder using Schauder estimates.
\begin{theorem}\label{ShiTam_improve}
Under the assumption of Theorem \ref{ShiTam}, $u$ satisfies
\begin{align*}
u = 1 + m r^{2-n} + O_\infty(r^{1-n}).
\end{align*}
\end{theorem}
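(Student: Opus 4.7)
The plan is to upgrade the $O_1$ decay of Theorem \ref{ShiTam} to $O_\infty$ via a dyadic rescaling argument combined with a standard interior parabolic Schauder bootstrap applied to the equation for the remainder.

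Set $u_0(r) := 1 + m r^{2-n}$ and $f := u - u_0$. Since $u_0$ depends only on $r$, $\Delta u_0 = 0$, and a direct computation in \eqref{zero scalar curvature} gives
\[ u_0^2 \Delta u_0 - (n-1) r (u_0)_r + \tfrac{(n-1)(n-2)}{2}(u_0 - u_0^3) = -\tfrac{3(n-1)(n-2)}{2} m^2 r^{4-2n} + O(r^{6-3n}), \]
so $u_0$ is an approximate solution. Substituting $u = u_0 + f$ into \eqref{zero scalar curvature} produces a quasi-linear equation for $f$ of the schematic form
\[ u^2 \Delta_\theta f - (n-1) r f_r - (n-1)(n-2) f = N(r,\theta, f), \]
where $N$ is polynomial in $f$ and $m r^{2-n}$ (with no derivatives of $f$) and reduces to $O(r^{4-2n})$ when $f \equiv 0$. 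Theorem \ref{ShiTam} gives $f \in O_1(r^{1-n})$.

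For each $R \gg 1$, rescale by setting $\tilde f_R(y,\theta) := R^{n-1} f(Ry,\theta)$ on the fixed cylinder $[1,2] \times S^{n-1}$. The $O_1$ bound yields $\|\tilde f_R\|_{C^1([1,2] \times S^{n-1})} \le C$ uniformly in $R$, and the rescaled PDE becomes
\[ \tilde u_R^2\, \Delta_\theta \tilde f_R - (n-1) y\, \partial_y \tilde f_R - (n-1)(n-2)\, \tilde f_R = \tilde N_R, \]
where $\tilde u_R(y,\theta) := u(Ry,\theta) \to 1$ uniformly and $\tilde N_R$ is polynomial in $\tilde f_R$ with coefficients uniformly bounded in $R$ (the worst contribution, from the $O(r^{4-2n})$ source term, rescales by $R^{3-n} = O(1)$ in dimension $n=3$ and by $R^{3-n} \to 0$ for $n \ge 4$). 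Setting $s := \log y$ converts this to a uniformly parabolic quasi-linear equation
\[ \partial_s \tilde f_R \;=\; \tfrac{\tilde u_R^{\,2}}{n-1}\, \Delta_\theta \tilde f_R \;-\; (n-2)\, \tilde f_R \;-\; \tfrac{1}{n-1}\, \tilde N_R \]
on $[0, \log 2] \times S^{n-1}$, with smooth polynomial dependence on $\tilde f_R$.

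Interior parabolic Schauder estimates applied iteratively now give uniform $C^k$ bounds on $\tilde f_R$ over $[s_0, \log 2 - s_0] \times S^{n-1}$ for every $k$; at each stage the source $\tilde N_R$ inherits the regularity of $\tilde f_R$, so the bootstrap runs unimpeded, and the uniform ellipticity of the principal part follows from $\tilde u_R \to 1$. Choosing $R$ to range over a geometric sequence whose rescaled annuli $[y_0 R, y_1 R]$ cover the large-$r$ region, the identity
\[ (r\partial_r)^i \nabla_\theta^j f(r,\theta)\big|_{r=Ry} \;=\; R^{1-n}\, (y\partial_y)^i \nabla_\theta^j \tilde f_R(y,\theta) \]
translates these uniform bounds directly into the decay $|(r\partial_r)^i \nabla_\theta^j f| \le C_{i,j}\, r^{1-n}$ for all $i,j$, i.e., $f \in O_\infty(r^{1-n})$. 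The principal technical point is recognizing the parabolic structure of the linearization in the time variable $s = \log y$, which enables interior Schauder estimates despite the absence of a second $r$-derivative in the equation; once this structure is identified, the bootstrap is routine.
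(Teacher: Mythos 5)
Your proposal correctly identifies the paper's central insight: the substitution $r = e^{t}$ (in your version $s = \log y$) exposes a uniformly parabolic structure in the zero scalar-curvature equation for the remainder $f$, so parabolic Schauder estimates drive the regularity gain. The bootstrap mechanism differs: you rescale to a fixed reference annulus $[1,2]\times S^{n-1}$ and iterate interior parabolic Schauder there, translating uniform $C^{k}$ bounds on $\tilde f_R$ back into weighted decay via $(r\partial_r)^i\nabla^j f = R^{1-n}(y\partial_y)^i\nabla^j \tilde f_R$, whereas the paper stays in log-radial coordinates, proves the weighted $C^{2+\alpha,1+\alpha/2}$ estimate directly, and then propagates regularity by differentiating the equation in $t$ and along Killing vector fields of $S^{n-1}$. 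Your formulation makes the origin of the weighted estimates more transparent and sidesteps the Killing-field commutation argument, at the cost of carrying the rescaling bookkeeping; the paper's version never leaves the original coordinates but implicitly uses the same scaling in the background. One detail you gesture at but should make explicit: interior parabolic Schauder shrinks the time slab at each iteration, so for a fixed derivative order $k$ you must either start from a larger annulus or take the shrinkage step $\delta = \delta(k)$ small, and only then fix an interior sub-annulus $[y_0,y_1]$ whose $R$-dilates cover the end. This is a standard device and the constants $C_{i,j}$ in $O_\infty$ are allowed to depend on $(i,j)$, so the argument goes through, but the $k$-dependence of the chosen annulus is the one step that should be spelled out.
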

\begin{proof}
Let $r = e^t$. Plugging $u = 1 + m r^{2-n} + f$ into \eqref{zero scalar curvature}, we see that the remainder $f$ satisfies the parabolic equation
\begin{align*}
(n-1) \pl_t f - u^2 \Delta f = -(n-1)(n-2)f -\frac{3}{2}(n-1)(n-2) m^2 r^{4-2n} + O_1(r^{3-2n}).
\end{align*}
Schauder estimates then imply $\| f \|_{C^{2+\alpha, 1+\alpha/2}} = O(r^{1-n})$ for any $0 <\alpha < 1$. 

Similarly, applying Schauder estimates to the parabolic equation satisfied by $\pl_t f$,  we get $\| \pl_t f\|_{C^{2+\alpha, 1 + \alpha/2}} = O(r^{1-n})$. In particular, $f \in O_2(r^{1-n})$. For the differentiation in space, we take a Killing vector field $X$ on $S^{n-1}$ to get
\begin{align*}
(n-1)\pl_t (Xf) - u^2 \Delta (Xf) = -(n-1)(n-2)Xf + X(u^2)\Delta f + O_1 (r^{3-2n})
\end{align*}
and hence $\| Xf\|_{C^{2+\alpha,1+\alpha/2}} = O(r^{1-n})$. Since the Killing vector fields span the tangent space of every point of $S^{n-1}$, we have $\| \na f\|_{C^{2+\alpha, 1 + \alpha/2}} = O(r^{1-n})$. Successively differentiating in time and space leads to $f = O_\infty(r^{1-n})$. 
\end{proof}

\begin{lem}\label{u_higher}
In dimension $n \ge 4$, if the metric $u^2 dr^2 + r^2 g_{S^{n-1}}$ has zero scalar curvature, then $u$ has the following asymptotic behavior \begin{align*}
u = 1 + m r^{2-n} + \dot u r^{1-n} + \frac{3}{2}m^2 r^{4-2n} + O_\infty( r^{-n-\frac{2}{n-1}} )
\end{align*}
where $\dot u \in \mathcal{H}_{\ell=1}$ is an arbitrary eigenfunction. Moreover, \begin{align*}
u_{\ell= 2} = \hat u r^{-n-\frac{2}{n-1}} + \dddot u r^{2-2n} + O_\infty(r^{2-2n-\frac{2}{n-1}})
\end{align*}
where $\hat u \in \mathcal{H}_{\ell=2}$ is an arbitrary eigenfunction and \begin{align*}
\dddot u = \frac{\frac{3}{2}(n-1)(n-2)+2(n-1)}{n(n-3)} (\dot u^2)_{\ell=2}.
\end{align*}
\end{lem}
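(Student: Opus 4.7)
The plan is to refine Theorem \ref{ShiTam_improve} by writing $u = 1 + m r^{2-n} + f$ with $f = O_\infty(r^{1-n})$ and substituting into the zero scalar curvature equation \eqref{zero scalar curvature}. A direct expansion shows that $f$ satisfies
\begin{align*}
\mathcal L f := \Delta f - (n-1) r f_r - (n-1)(n-2) f = \mathcal N(f, mr^{2-n}),
\end{align*}
where $\mathcal N$ collects all quadratic and cubic terms in $u-1$ and, given $f = O_\infty(r^{1-n})$, is $O_\infty(r^{4-2n})$ at leading order. The linear operator $\mathcal L$ acts diagonally on spherical harmonics: for $\phi \in \mathcal H_\ell$,
\begin{align*}
\mathcal L(\phi r^\alpha) = -\bigl[\ell(\ell+n-2) + (n-1)\alpha + (n-1)(n-2)\bigr] \phi r^\alpha,
\end{align*}
so the unique decaying indicial exponent in mode $\ell$ is $\alpha_\ell = -[\ell(\ell+n-2) + (n-1)(n-2)]/(n-1)$, giving $\alpha_0 = 2-n$, $\alpha_1 = 1-n$, and $\alpha_2 = -n - \tfrac{2}{n-1}$.

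I would then proceed mode by mode. \emph{Mode $\ell = 0$:} projecting $\mathcal N$ onto the constants gives the leading source $\tfrac{3}{2}(n-1)(n-2) m^2 r^{4-2n}$, coming from $(mr^{2-n})^2$ in $(u^3-u)/2$; since $\mathcal L(A r^{4-2n}) = (n-1)(n-2) A r^{4-2n}$, one reads off $A = \tfrac{3}{2} m^2$. \emph{Mode $\ell = 1$:} the leading $\mathcal H_1$ source occurs only at order $r^{3-2n}$, which is strictly smaller than $r^{-n-2/(n-1)}$ for $n \ge 4$, so $f_{\ell=1} = \dot u r^{1-n}$ plus a remainder absorbed in the error. \emph{Mode $\ell = 2$:} the leading source at order $r^{2-2n}$ is
\begin{align*}
\Bigl[\tfrac{3(n-1)(n-2)}{2} + 2(n-1)\Bigr] (\dot u^2)_{\ell=2}\, r^{2-2n},
\end{align*}
where the first coefficient comes from $\tfrac{3(n-1)(n-2)}{2} f^2$ and the second from $2 f \Delta f$, using $\Delta \dot u = -(n-1) \dot u$. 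A particular solution $\dddot u r^{2-2n}$ satisfies $n(n-3) \dddot u = \bigl[\tfrac{3(n-1)(n-2)}{2} + 2(n-1)\bigr] (\dot u^2)_{\ell=2}$, which is exactly the stated formula; the factor $n(n-3)$ is nonzero precisely because $n \ge 4$, which is why this case is cleaner than the logarithm-generating $n = 3$ case of Theorem \ref{u_3}. The homogeneous $\ell = 2$ mode $\hat u r^{-n-2/(n-1)}$ remains free. \emph{Modes $\ell \ge 3$:} one checks $\alpha_\ell < -n - \tfrac{2}{n-1}$, so these modes decay strictly faster than the claimed remainder and are absorbed.

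To turn these formal mode expansions into rigorous $O_\infty$ estimates, I would adapt the ODE-with-remainder argument from the proof of Theorem \ref{u_3}: multiply the equation for $f$ by the relevant spherical harmonic, integrate over $S^{n-1}$, and integrate the resulting first-order ODE in $t = \log r$ inward from $\infty$, using the a priori decay to bound the remainder. The $O_\infty$ promotion then proceeds exactly as in Theorem \ref{ShiTam_improve}: apply Schauder estimates to the parabolic equation satisfied by the remainder, successively differentiating in $t$ and commuting with Killing vector fields on $S^{n-1}$. The main technical hurdle is the $\ell = 2$ bookkeeping: one must correctly identify the $\mathcal H_2$-projection of the nonlinear source (where Proposition \ref{product of first eigenfunctions} is used to handle $|\na \dot u|^2$ terms that arise from integrations by parts) and run a second improvement round, after subtracting $\dddot u r^{2-2n}$, to reach the refined remainder order $r^{2-2n-2/(n-1)}$.
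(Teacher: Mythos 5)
Your proposal is correct and follows essentially the same route as the paper: substitute $u = 1 + mr^{2-n} + f$, decompose the resulting equation for $f$ into spherical-harmonic modes, read off the decaying indicial exponents $\alpha_\ell$, solve the mode ODEs in $t = \log r$, and promote to $O_\infty$ via Schauder as in Theorem~\ref{ShiTam_improve}. One small slip in the parenthetical: for this lemma the $\ell=2$ source is $\tfrac{3}{2}(n-1)(n-2)\dot u^2 - 2\dot u\,\Delta\dot u$, which is handled directly from $\Delta\dot u = -(n-1)\dot u$, so Proposition~\ref{product of first eigenfunctions} is not needed here (it enters only later, in Lemma~\ref{V_high}, where $\nabla\dot u\cdot\nabla\dot V$ terms from the $rr$-static equation appear).
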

\begin{proof}
Plugging $u = 1 + m r^{2-n} + f$ into the zero scalar curvature equation \eqref{zero scalar curvature}, we get
\begin{align*}
\Delta f + O(r^{3-2n}) - (n-1)r \lt( (2-n)mr^{1-n} + f_r \rt) \\
+ \frac{(n-1)(n-2)}{2} \lt( -2m r^{2-n} -2f - 3m^2 r^{4-2n} + O(r^{3-2n}) \rt) =0.
\end{align*}
Hence $F = f - \frac{3}{2}m^2 r^{4-2n} $ satisfies
\begin{align*}
\Delta F -(n-1)r F_r - (n-1)(n-2)F = O(r^{3-2n}).
\end{align*}

Let $M = \int_{S^{n-1}} F \vphi$ for an eigenfunction $\vphi \in \mathcal{H}_\ell$. We have $M = O(r^{1-n})$,  
\begin{align*}
-\ell (\ell + n-2) M - (n-1)r M_r - (n-1)(n-2) M = O(r^{3-2n}),
\end{align*}
and integration yields
\begin{align*}
M = c r^{-1-\alpha} + O(r^{3-2n}), \quad 1 + \alpha = n-2 + \frac{\ell(\ell+n-2)}{n-1}
\end{align*}
for some constant $c$. For $\ell=0$, we have $c=0$ since $M = O(r^{1-n})$. For $\ell=1$ we have $1+\alpha = n-1$ and hence the existence of $\dot u$. For $\ell \ge 2$, we have $1 + \alpha \ge n+ \frac{2}{n-1}$. This proves the first statement.

We plug $u = 1 + m r^{2-n} + \dot u r^{1-n} + \frac{3}{2}m^2 r^{4-2n} + \mathcal{F}$ into the zero scalar curvature equation to get
\begin{align*}
&\Delta\mathcal{F} -(n-1)r \mathcal{F}_r - (n-1)(n-2)\mathcal{F}\\
&= \lt( -2m\Delta\dot u + 3(n-1)(n-2)m\dot u \rt)r^{3-2n} + \lt( \frac{3}{2}(n-1)(n-2) \dot u^2 - 2\dot u \Delta\dot u \rt) r^{2-2n} \\
&\quad + \frac{9}{2}(n-1)(n-2) m^3 r^{6-3n} + O(r^{2-2n - \frac{2}{n-1}}).
\end{align*}
Let $\vphi \in \mathcal{H}_{\ell=2}$ be an eigenfunction and $M(r) = \int_{S^{n-1}} \mathcal{F} \vphi$. We get 
\begin{align*}
-(n-1) r M' - (n^2-n+2)M = \lt[ \frac{3}{2}(n-1)(n-2) + 2(n-1) \rt] r^{2-2n}\int_{S^{n-1}} \dot u^2 \vphi \\ + O(r^{2-2n - \frac{2}{n-1}}).
\end{align*}
Integrating the ODE, we obtain
\begin{align*}
\mathcal{F}_{\ell=2} = \hat u r^{-n - \frac{2}{n-1}} + \frac{\frac{3}{2}(n-1)(n-2)+2(n-1)}{n(n-3)}\lt( \dot u^2\rt)_{\ell=2} r^{2-2n} + O(r^{2-2n - \frac{2}{n-1}}) 
\end{align*}
where $\hat u$ is an arbitrary eigenfunction in $\mathcal{H}_{\ell=2}$. 
\end{proof}

\section{Rotational symmetry to the order $r^{1-n}$}\label{section:rotational symmetry at the order}
The goal of this section is to show that $u$ and $V$ are rotationally symmetric to the order of $r^{1-n}$. First of all, $(M^{n},g)$ of the form \eqref{quasi-spherical} with scalar curvature $R_{g}=0$ are asymptotically flat of order $\tau = n-2$ by the expansion of the metric coefficient $u$. The asymptotic behavior of a static potential $V$ is well-understood on such a background. In fact, according to Proposition B.4 of \cite{HMM}, if $V >0$ on $M^{n}$ then we have after scaling that
\begin{lem}
\begin{align*}
    V = 1 - m r^{2-n} + O_2(r^{1-n})
\end{align*}
where $m$ is the constant in the expansion of the metric coefficient $u$.
\end{lem}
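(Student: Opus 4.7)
The plan is to proceed in three steps: first invoke \cite{HMM} to obtain convergence of $V$ at infinity, then establish the leading $r^{2-n}$ expansion by separation of variables, and finally identify the coefficient via the $rr$-component of the static equation.

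First, since $V$ is bounded and satisfies $\Delta_g V = 0$ on the quasi-spherical end where $u = 1 + m r^{2-n} + O_\infty(r^{1-n})$ by Theorem \ref{ShiTam_improve}, Proposition B.4 of \cite{HMM} yields that $V$ converges as $r \rw \infty$. After rescaling so that $V \rw 1$, set $W = V - 1$. The Laplace operator $\Delta_g$ differs from the flat Euclidean Laplacian $\Delta_\delta = \pl_r^2 + \frac{n-1}{r}\pl_r + \frac{1}{r^2}\Delta$ by coefficients of order $O(r^{2-n})$, so decomposing $W$ into spherical harmonics, each mode $W_\ell$ satisfies to leading order the indicial equation $\alpha^2 + (n-2)\alpha - \ell(\ell+n-2) =0$, whose only decaying root is $\alpha = -\ell - (n-2)$. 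The dominant mode is $\ell=0$, giving $W = c_0 r^{2-n} + R(r,\theta)$ with $R = O(r^{1-n})$ for some constant $c_0 \in \R$. A Schauder bootstrap in the cylindrical variable $t = \log r$, analogous to the one in the proof of Theorem \ref{ShiTam_improve}, upgrades this to $R \in O_2(r^{1-n})$.

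Second, to identify $c_0 = -m$, I use the $rr$-equation \eqref{rr}. Plugging the expansions of $u$ and $V$ in, the only leading contribution on the left-hand side at order $r^{-n}$ comes from $V_{rr}$ and equals $(n-1)(n-2)c_0 r^{-n}$; the cross-terms $u^{-1} u_r V_r$ and $r^{-2} u \na^a u \na_a V$ are of order $r^{2-2n}$, which is lower order since $n>2$. On the right-hand side, $1 - u^2 = -2m r^{2-n} + O(r^{1-n})$, so $\frac{(n-1)(n-2)}{2 r^2}(1-u^2) V = -(n-1)(n-2) m r^{-n} + O(r^{1-2n})$. Matching the $r^{-n}$ coefficients gives $c_0 = -m$, hence $V = 1 - m r^{2-n} + O_2(r^{1-n})$.

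The main obstacle is the initial convergence statement, since the weak decay of the background metric precludes elementary barrier arguments and genuinely requires the elliptic analysis of \cite{HMM, Prop.\ B.4}. Once that is granted, the rest of the proof is a direct separation-of-variables argument followed by a short matching calculation in the $rr$-static equation, so no additional ingredients beyond the asymptotics already extracted in Section 4 are needed.
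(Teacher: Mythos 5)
Your proof is correct, but it takes a more hands-on route than the paper, which simply invokes Proposition B.4 of \cite{HMM} as delivering the full expansion $V = 1 - m r^{2-n} + O_2(r^{1-n})$ in one stroke (after the normalization $V\to 1$). You instead use \cite{HMM} only to get convergence of $V$ at infinity, then run a Bartnik-style indicial-root analysis for $\Delta_g$ in the cylindrical variable $t=\log r$ to isolate the $\ell=0$ mode $c_0 r^{2-n}$, bootstrap the remainder with Schauder estimates, and finally pin down $c_0=-m$ by matching $r^{-n}$ coefficients in the $rr$-component \eqref{rr} of the static equations. The last step is a genuine and useful addition: harmonicity alone cannot fix the $r^{2-n}$ coefficient, so if one only knew that $V=a+br^{2-n}+O_2(r^{1-n})$ for some constants $a,b$, the static system is what forces $b=-m$; your explicit matching makes this transparent where the paper's citation is opaque. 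Two small cautions: (i) the separation-of-variables step is somewhat informal --- the angular modes do not decouple exactly because the $O_\infty(r^{1-n})$ part of $u$ is $\theta$-dependent, and one should check that the resulting mode-coupling is lower order and that the tail of the mode sum is under control (this is essentially what the cited elliptic machinery does rigorously); note also that the $\Delta_g V=0$ equation is \emph{elliptic}, not parabolic, in $(t,\theta)$, so the bootstrap is an interior elliptic Schauder iteration rather than one literally ``analogous'' to Theorem \ref{ShiTam_improve}; and (ii) in the coefficient-matching step the error in the right-hand side is $O(r^{-1-n})$, coming from the $O(r^{1-n})$ tail of $u$ entering $1-u^2$, not $O(r^{1-2n})$ as you wrote --- this does not affect the matching at order $r^{-n}$.
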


Next, we derive further expansions of $V$. It suffices to begin with $V = 1 - mr^{2-n} + o_1(r^{2-n})$.
\begin{lemma}\label{Vdot}
We have
\begin{align*}
V = 1 - m r^{2-n} - \frac{n-2}{n}\dot u r^{1-n} -\frac{1}{2} m^2 r^{4-2n} + O_2(r^{-\nu}), \quad \nu = \begin{cases} 3, & n=3\\ n + \frac{2}{n-1}, & n \ge 4 \end{cases}
\end{align*}
\end{lemma}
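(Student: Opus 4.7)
The plan is to write $V = 1 - m r^{2-n} + Z$, where the starting hypothesis gives $Z = o_1(r^{2-n})$, and extract the next two terms $-\tfrac{n-2}{n}\dot u \cdot r^{1-n}$ and $-\tfrac{1}{2} m^2 r^{4-2n}$ using two different components of the static equations, one after the other. As a preliminary step I would upgrade the decay of $Z$ to its second derivatives essentially as in the comment block preceding this lemma: differentiating \eqref{ra} once tangentially and integrating, together with \eqref{ab}, yields $\nabla_a\nabla_b Z = o(r^{2-n})$ and $\pl_r \nabla_a Z = o(r^{1-n})$, and then \eqref{rr} yields $Z_{rr} = o(r^{-n})$. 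These bounds would then be used freely in what follows.

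To extract the $r^{1-n}$ term, I would substitute $V = 1 - mr^{2-n} + Z$ and the known expansion of $u$ (from Theorem \ref{u_3} in dimension three, or from the preceding lemma in higher dimensions) into the $ra$-equation \eqref{ra}. The tangential derivative of $u$ carries the $\dot u$ content at leading order, while every other term on both sides is of higher order, so the equation reduces to
\[
\pl_r\!\left(\frac{\nabla_a Z}{r}\right) = (n-2) r^{-n-1}\nabla_a \dot u + \text{h.o.t.}
\]
Integrating from $r$ to $\infty$, using $r^{-1}\nabla_a Z \to 0$, gives $\nabla_a Z = -\tfrac{n-2}{n}\nabla_a \dot u \cdot r^{1-n} + \text{h.o.t.}$, so $Z = -\tfrac{n-2}{n}\dot u \cdot r^{1-n} + p(r) + \text{h.o.t.}$ for some rotationally symmetric $p$. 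For the rotationally symmetric $r^{4-2n}$ term, I would plug the rotationally symmetric parts $V^{(0)} = 1 - mr^{2-n} + p + \text{h.o.t.}$ and $u^{(0)} = 1 + mr^{2-n} + \tfrac{3}{2} m^2 r^{4-2n} + \text{h.o.t.}$ into the $rr$-equation \eqref{rr}. The $r^{-n}$ terms are forced to cancel (this is where the definition of $m$ enters), and the leading balance produces a second-order ODE for $p$ of the form $p'' = -(n-2)(2n-3)\,m^2\,r^{2-2n} + \text{h.o.t.}$, whose unique decaying solution is $p = -\tfrac{1}{2} m^2 r^{4-2n} + \text{h.o.t.}$

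Setting $G := V - 1 + m r^{2-n} + \tfrac{n-2}{n}\dot u \cdot r^{1-n} + \tfrac{1}{2} m^2 r^{4-2n}$, the previous two steps give only a pointwise estimate on $G$. To upgrade this to the desired two-derivative bound $G = O_2(r^{-\nu})$, I would bootstrap exactly as in the proof of Theorem \ref{ShiTam_improve}: after the substitution $r = e^t$, rewrite $\Delta_g V = 0$ as a parabolic equation for $G$, with source controlled at order $r^{-\nu-2}$ by the next-order terms of the $u$-expansion (the $\ell = 2$ coefficient at $r^{-n-2/(n-1)}$ when $n \ge 4$, or $\hat u r^{-4}\log r$ when $n = 3$). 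Iterated Schauder estimates together with differentiation along Killing vector fields on $S^{n-1}$ then yield the full $O_2$ bound. This final step is the main obstacle: one has to pin down the precise value of $\nu$ so that the nonlinear couplings between the $\ell = 1$ modes of $u$ and $V$ and the next-order modes of $u$ feed into the equation for $G$ at the matching order, and then arrange the Schauder bootstrap carefully enough that the two-derivative decay of $G$ is not lost into the sources.
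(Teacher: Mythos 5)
Your decomposition differs from the paper's but the coefficient-extraction steps appear sound: you read the $\ell=1$ content $-\tfrac{n-2}{n}\dot u\, r^{1-n}$ off the $ra$-component and the rotationally symmetric $-\tfrac12 m^2 r^{4-2n}$ off the $rr$-component, whereas the paper obtains both simultaneously from a single double integration of the $rr$-component (computing $f_{rr} = -(n-1)(n-2)\dot u\, r^{-1-n} - (n-2)(2n-3)m^2 r^{2-2n} + o(r^{-1-n})$ and integrating twice). Either route works; the paper's is a bit more compact.

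The final step, however, has a genuine gap. You propose to upgrade $G$ to $O_2(r^{-\nu})$ by rewriting $\Delta_g V = 0$ as a parabolic equation under $r = e^t$ and bootstrapping with Schauder estimates as in Theorem~\ref{ShiTam_improve}. But the Laplace equation for $V$ in polar coordinates is second order in $r$ (it contains $V_{rr}$), so $r=e^t$ turns it into an \emph{elliptic} equation in $(t,\theta)$, not a parabolic one. Theorem~\ref{ShiTam_improve} works for the lapse $u$ precisely because the zero scalar curvature equation~\eqref{zero scalar curvature} is first order in $r$, so after $r = e^t$ one gets a genuine forward heat-type equation; no analogous structure exists for $V$. (If one instead adds $\Delta_g V = 0$ to the $rr$-component to eliminate $V_{rr}$, as in~\eqref{rr_rot}, the resulting first-order-in-$r$ equation is \emph{backward} parabolic, so the same bootstrap still does not apply.) The paper avoids this entirely by treating the $rr$-, $ra$-, and $ab$-components of the static equations as ODEs in $r$ for $F$, $\nabla_a F$, and $\nabla_a\nabla_b F$ respectively, integrating each from $r$ to $\infty$ with the refined $O_\infty(r^{-\nu})$ expansion of $u$ inserted; this yields $F, \nabla_a F, \nabla_a\nabla_b F = O(r^{-\nu})$, $F_r, \pl_r\nabla_a F = O(r^{-1-\nu})$, and $F_{rr} = O(r^{-2-\nu})$, which together are exactly the claimed $O_2(r^{-\nu})$ bound. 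You would need to replace your Schauder bootstrap with an argument of that kind.
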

\begin{proof}
Write $V = 1 - mr^{2-n} + f, f \in o_1(r^{2-n})$. We will use $u = 1 + mr^{2-n} + \dot u r^{1-n} + \frac{3}{2}m^2 r^{4-2n} + o_1(r^{1-n})$ at this stage; the true decay $O(r^{-\nu})$ will be used later. Examining the $rr$-component of the static equations \eqref{rr}, we have on the left-hand side
\begin{align*}
V_{rr} &= -(n-1)(n-2)m r^{-n} + f_{rr}\\
-u^{-1}u_r V_r &= - (1 + O_\infty(r^{2-n}))(-(n-2)m r^{1-n} + O_\infty(r^{-n}))((n-2)m r^{1-n} + o(r^{1-n})) \\ &= (n-2)^2m^2 r^{2-2n} + o(r^{2-2n})\\
-\frac{u}{r^2}\na^a u \na_a V &= - r^{-2} (1 + O_\infty(r^{2-n})) (\na^a \dot u r^{1-n} + o(r^{1-n})) \cdot o(r^{2-n}) \\ &= o(r^{1-2n})
\end{align*} and right-hand side is given by
\begin{align*}
\mbox{RHS} &= \frac{(n-1)(n-2)}{2} r^{-2} \lt( -2nr^{2-n} - 2\dot u r^{1-n} - 4m^2 r^{4-2n} + o_1(r^{1-n}) \rt) \cdot \\ & \hspace{10cm} (1 - m r^{2-n} + o_1(r^{2-n}) ) \\
&= -(n-1)(n-2)mr^{-n} - (n-1)(n-2)m^2 r^{2-2n} \\
&\quad - (n-1)(n-2)\dot u r^{-1-n} + o_1(r^{-1-n}).
\end{align*}
We thus obtain
\begin{align*}
f_{rr} = - (n-1)(n-2) \dot u r^{-1-n} - (n-2)(2n-3)m^2 r^{2-2n} + o(r^{-1-n}).
\end{align*}
Integration in $r$ yields
\begin{align*}
f_r = \frac{(n-1)(n-2)}{n} \dot u r^{-n} + (n-2) m^2 r^{3-2n} + o(r^{-n})
\end{align*}
and then
\begin{align*}
f = - \frac{n-2}{n}\dot u r^{1-n} - \frac{1}{2}m^2 r^{4-2n} + o(r^{1-n}).
\end{align*}
We thus conclude that $V = 1 - m r^{2-n} - \frac{n-2}{n}\dot u r^{1-n} - \frac{1}{2}m^2 r^{4-2n} + F$ with $F = o(r^{1-n})$.

By Theorem \ref{Bartnik} and Lemma \ref{u_higher}, we write $u = 1 + m r^{2-n} + \dot u r^{1-n} + \frac{3}{2} m^2 r^{4-2n} + O_\infty(r^{-\nu})$, which holds in all dimensions, to improve the decay of $F$. In the $rr$-component of the static equations, the improvement happens at two terms:
\begin{align*}
-u^{-1}u_r V_r &= \cdots + O(r^{1-2n}),\\
\mbox{RHS} &= \cdots +  O_1(r^{-2-\nu}).
\end{align*}
As $1-2n \le -2-\nu$, we obtain $F_{rr} = O(r^{-2-\nu})$ and hence $F_r = O(r^{-1-\nu}), F= O(r^{-\nu})$. 

Next we consider the $ra$-component of the static equation. We compute
\begin{align*}
-u^{-1}u_a V_r &= O(r^{2-2n})\\
-\frac{1}{r} \pl_a V &= \frac{n-2}{n}\pl_a \dot u r^{-n} - \frac{1}{r} \pl_a F\\
\frac{n-2}{r}Vu^{-1} u_a &= (n-2)\pl_a \dot u r^{-n} + O(r^{-1-\nu})
\end{align*}
to get
\begin{align*}
\pl_r \pl_a F - \frac{1}{r} \pl_a F = O(r^{-1-\nu}).
\end{align*}
Integration in $r$ yields $\pl_a F = O(r^{-\nu})$ and then $\pl_r \pl_a F = O(r^{-1-\nu})$.

Finally consider $ab$-component of the static equations. We look at terms in front of $\sigma_{ab}$  
\[ u^{-2} \lt( -r V_r + r Vu^{-1}u_r\ + (n-2) V(u^2-1) \rt) \]
and compute
\begin{align*}
-r V_r &= -(n-2)mr^{2-n} - \frac{(n-1)(n-2)}{n}\dot u r^{1-n} - (n-2)m^2 r^{4-2n} + O(r^{-\nu})\\
Vu^{-1} \cdot ru_r &= (1- 2m r^{2-n} + O(r^{1-n})) \cdot \\
&\qquad\qquad  \lt( - (n-2)m r^{2-n} - (n-1)\dot u r^{1-n} - 3(n-2)m^2 r^{4-2n} + O_\infty(r^{\nu}) \rt)\\
V (u^2-1) &= (1 - mr^{2-n} + O_1 (r^{1-n})) \lt( 2m r^{2-n} + 2\dot u r^{1-n} + m^2 r^{4-2n} + O_\infty(r^{\nu}) \rt)
\end{align*}
Since $r^{3-2n}$ decays faster than $r^{-\nu}$, we obtain
\begin{align*}
u^{-2} \lt( -r V_r + r Vu^{-1}u_r\ + (n-2) V(u^2-1) \rt) = -\frac{2}{n}\dot u r^{1-n} + O(r^{-\nu}).
\end{align*}
On the other hand, since $\dot u$ is an eigenfunction in $\mathcal{H}_{\ell=1}$, we have $\na_a\na_b \dot u = - \dot u \sigma_{ab}$ and hence
\begin{align*}
\na_a\na_b V + Vu^{-1} \na_a\na_b u = \frac{n-2}{n}\dot u \sigma_{ab} + \na_a\na_b F - \dot u \sigma_{ab} + O_\infty(r^{-\nu}).
\end{align*}
Therefore, $\na_a\na_b F = O(r^{-\nu})$ and the proof of $F = O_2(r^{-\nu})$ is completed.
\end{proof}
Next we separate the cases of 3-dimension and higher dimensions $n \ge 4$.
\begin{lem}\label{V_3}
In $3$-dimension, we have
\begin{align*}
V = 1 - mr^{-1} + (\dot V-\frac{1}{2}m^2) r^{-2} + \ddot V r^{-3} + \hat V r^{-4}\log r + \dddot V r^{-4} + O_2(r^{-5}\log r) 
\end{align*}
where
\begin{enumerate}
\item $\dot V = -\frac{1}{3} \dot u$ and $\ddot V \in\mathcal{H}_{\ell\le 1}$ depends linearly on $\dot V, \dot u, \ddot u, m^2$.
\item $\hat V = - \frac{1}{10} \hat u$ and 
$\dddot V$ is in $\mathcal{H}_{\ell\le 2}$.
\end{enumerate}
\end{lem}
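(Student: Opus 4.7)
The plan is to extend the bootstrap argument of Lemma \ref{Vdot} one more order, using the full expansion of $u$ from Theorem \ref{u_3} in conjunction with the $rr$-, $ra$-, and $ab$-components of the static equations. Since Lemma \ref{Vdot} already gives
\[ V = 1 - m r^{-1} - \tfrac{1}{3}\dot u\, r^{-2} - \tfrac{1}{2} m^{2} r^{-2} + G, \qquad G = O_{2}(r^{-3}),\]
(so the identity $\dot V = -\tfrac{1}{3}\dot u$ is immediate), the entire task is to expand $G$.

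I would start by substituting this ansatz into the $rr$-equation \eqref{rr} in dimension $n=3$, together with the Bartnik expansion
\[ u = 1 + m r^{-1} + (\dot u + \tfrac{3}{2} m^{2}) r^{-2} + \ddot u\, r^{-3} + \hat u\, r^{-4}\log r + \dddot{u}\, r^{-4} + O_\infty(r^{-5}\log r). \]
Expanding every term on the left-hand side (the two linear terms $-u^{-1}u_{r} V_{r}$ and $-r^{-2} u \nabla^{a}u\nabla_{a}V$ contribute at the needed orders) and on the right-hand side (using $1-u^{2}$ in the form displayed in the paragraph above Lemma \ref{V_3}), one obtains an ODE of the shape
\[ G_{rr} = A_{1} r^{-5} + A_{2}\, r^{-6}\log r + A_{3}\, r^{-6} + O(r^{-7}\log r), \]
where $A_{1}, A_{2}, A_{3}$ are explicit expressions in $\dot u$, $\ddot u$, $\hat u$, $\dddot{u}$, $m$. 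Two successive integrations in $r$ (using $\int r^{-6}\log r\,dr = -\tfrac{1}{5}r^{-5}\log r - \tfrac{1}{25}r^{-5}$ and $\int r^{-5}\log r\,dr = -\tfrac{1}{4}r^{-4}\log r - \tfrac{1}{16}r^{-4}$) produce exactly the shape
\[ G = \ddot V\, r^{-3} + \hat V\, r^{-4}\log r + \dddot V\, r^{-4} + O_2(r^{-5}\log r), \]
with $\ddot V$ a linear combination of $\dot V, \dot u, \ddot u, m^{2}$ and $\hat V = \tfrac{1}{20} A_{2}$. Tracing the source of $A_{2}$: the only $r^{-6}\log r$ contribution to the right-hand side of \eqref{rr} comes from the $\hat u\, r^{-4}\log r$ term in $1-u^{2}$, producing $A_{2} = -2\hat u + (\text{terms killed by the angular-gradient and }u^{-1}u_{r}V_{r}\text{ cancellations})$, so that $\hat V = -\tfrac{1}{10}\hat u$. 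Improved differentiability ($O_{2}$ instead of $O_{0}$) is obtained exactly as in the proof of Lemma \ref{Vdot}: differentiate the $ra$-equation once in $r$ and angularly to upgrade $\pl_{a} G$ and $\pl_{r}\pl_{a} G$, then use the $ab$-equation to upgrade $\na_{a}\na_{b} G$.

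The spherical-harmonic restrictions are obtained from the $ab$-component \eqref{ab}. At the order $r^{-3}$, extracting the traceless part of \eqref{ab} yields an identity of the form $\na_{a}\na_{b} \ddot V = g \sigma_{ab}$ for some function $g$ (all explicitly known dependences on $\dot u, \dot V, \ddot u, m^{2}$ sit in the trace part); Proposition \ref{characterizing Hle1} then forces $\ddot V \in \mathcal{H}_{\ell\le 1}$, and the linear dependence on $\dot V, \dot u, \ddot u, m^{2}$ is read directly from the trace part. Exactly the same device applied at the order $r^{-4}$ gives a traceless identity for $\dddot V$ whose right-hand side now contains the mode-$2$ source coming from $\hat u_{\ell=2} = -\tfrac{7}{2}(\dot u^{2})_{\ell=2}$ and the bilinear $\dot u\,\dot V$ terms; absorbing these and invoking Proposition \ref{characterizing Hle1} again leaves $\dddot V \in \mathcal{H}_{\ell\le 2}$.

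The main obstacle will be the bookkeeping of cross-terms at the $r^{-6}$ and $r^{-6}\log r$ orders: every product $u^{-1}u_{r}\cdot V_{r}$, every $\na u\cdot \na V$ pairing, and the expansion of $u^{2}$ all contribute at these orders, and one needs to verify that the non-logarithmic contributions reorganize into the announced linear dependence for $\ddot V$ and $\dddot V$ while the $\log r$ contributions reduce cleanly to $-\tfrac{1}{10}\hat u$. This is mechanical but numerically delicate, since a single sign or coefficient error would misidentify the ratio $\hat V/\hat u$. The $ra$-equation provides a useful consistency check: integrating \eqref{ra} in $r$ relates $\pl_{a}\ddot V$ and $\pl_{a}\hat V$ directly to $\pl_{a}\ddot u$, $\pl_{a}\hat u$, and one can cross-verify the coefficient $-\tfrac{1}{10}$ from this.
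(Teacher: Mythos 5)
Your plan is essentially the same as the paper's proof: both substitute the expansions of $u$ (Theorem \ref{u_3}) and $V$ (Lemma \ref{Vdot}) into the $rr$-component \eqref{rr}, read the result as an asymptotic ODE in $r$, and integrate twice from $r$ to $\infty$ (with no free constants since the remainder and its radial derivative vanish at infinity) to get the full expansion; your integration constants $\int r^{-6}\log r\,dr = -\tfrac{1}{5}r^{-5}\log r - \tfrac{1}{25}r^{-5}$, $\int r^{-5}\log r\,dr = -\tfrac{1}{4}r^{-4}\log r - \tfrac{1}{16}r^{-4}$ and the coefficient $\hat V = \tfrac{1}{20}A_2 = -\tfrac{1}{10}\hat u$ all check out against the paper's computation. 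The only divergence is cosmetic: the paper extracts $\ddot V\in\mathcal{H}_{\ell\le 1}$ and $\dddot V\in\mathcal{H}_{\ell\le 2}$ by simply reading the integrated formula (whose ingredients $\dot u$, $\dot V$, $\ddot u$, $m^2$, and $\hat u$, $\dddot u$, $\dot u\dot V$, $\na^a\dot u\na_a\dot V$, $\dot u^2$, $\mathcal{E}$ all lie in the right eigenspaces), whereas you propose a detour through the traceless part of the $ab$-equation and Proposition \ref{characterizing Hle1} — a valid but unnecessary extra step once the explicit formula is in hand, and one that requires first subtracting the known mode-$2$ sources since Proposition \ref{characterizing Hle1} itself only yields $\mathcal{H}_{\ell\le 1}$.
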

\begin{proof}
Plugging $V = 1 - mr^{-1} + (\dot V-\frac{1}{2}m^2) r^{-2}+ \mathfrak{F}$ into the $rr$-component of the static equations and using $u = 1 + mr^{-1} + (\dot u + \frac{3}{2}m^2) r^{-2} + \ddot u r^{-3} + O(r^{-4}\log r)$, one sees that
$\mathfrak{F} = + \ddot V r^{-3} + f$ with $f = O_1(r^{-4}\log r)$ and $\ddot V$ depending linearly on $\dot V, \dot u, \ddot u, m^2$.

Now using Theorem \ref{u_3} and the $rr$-component of the static equation, we get
\begin{align*}
&f_{rr} -4 \dot u \dot V r^{-6} - \na^a \dot u \na_a \dot V  r^{-6} \\
&= - 2 \hat u r^{-6}\log r  + \lt[ -2 \dot u \dot V -2 \dddot u- \dot u^2 + \mathcal{E}\rt] r^{-6} + O(r^{-7}\log r)
\end{align*}
with $\mathcal{E} \in \mathcal{H}_{\ell\le 1}$. Integration in $r$ yields
\begin{align*}
f_r =  \hat u \lt( \frac{2}{5} r^{-5}\log r + \frac{2}{25} r^{-5} \rt) - \frac{1}{5} \lt( 2\dot u\dot V + \na^a \dot u \na_a \dot V - 2 \dddot u - \dot u^2 + \mathcal{E} \rt)r^{-5} + O(r^{-6}\log r)
\end{align*}
and
\begin{align*}
f = -\frac{1}{10} \hat u r^{-4}\log r + \lt( -\frac{9}{200}\hat u + \frac{1}{20} \lt( 2\dot u\dot V + \na^a \dot u \na_a \dot V - 2 \dddot u - \dot u^2 + \mathcal{E}\rt) \rt) r^{-4} + O(r^{-5}\log r).
\end{align*}

The decay of the derivatives of $V$ can be improved as in Lemma \ref{Vdot} and we omit the details.
\end{proof}
\begin{lem}\label{V_high}
In dimensions $n \ge 4$, we have
\begin{align*}
V = 1 - m r^{2-n} + \dot V r^{1-n} - \frac{1}{2} m^2 r^{4-2n} + O_2(r^{-n-\frac{2}{n-1}})
\end{align*}
with $\dot V = -\frac{n-2}{n}\dot u$. Moreover, 
\begin{align*}
V_{\ell= 2} = \hat V r^{-n-\frac{2}{n-1}} + \dddot V r^{2-2n} + o_2(r^{2-2n})
\end{align*}
where
\begin{align*}
\hat V = -\frac{(n-1)(n-2)}{\lt( 1+n+\frac{2}{n-1}\rt)\lt( n+ \frac{2}{n-1} \rt)} \hat u, \quad \dddot V = - \frac{(n-2)^2}{n(2n-1)(2n-2)}  (\dot u^2)_{\ell=2} - \frac{n-2}{2(2n-1)} \dddot u.
\end{align*}
\end{lem}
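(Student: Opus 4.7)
The plan is to build on Lemma \ref{Vdot} for the base expansion and extract the mode-$2$ structure from the $rr$-component of the static equations. The first displayed expansion, including $\dot V = -\frac{n-2}{n}\dot u$, is immediate from Lemma \ref{Vdot} with the decay rate $\nu := n + \frac{2}{n-1}$, so only the moreover statement requires new work.

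Writing $V_{\ell=2} = \hat V r^{-\nu} + \dddot V r^{2-2n} + W$ with $W = o_2(r^{2-2n})$, I will substitute this alongside the expansion of $u$ given by the preceding lemma into \eqref{rr} and project onto $\mathcal{H}_{\ell=2}$. At the leading order $r^{-\nu-2}$, only the direct coupling with the $\hat u r^{-\nu}$ piece of $u_{\ell=2}$ contributes, since all products of mode-$1$ terms land at $r^{-2n}$ instead. The left-hand side of \eqref{rr} contributes $\nu(\nu+1)\hat V r^{-\nu-2}$ from $V_{rr}$, while the right-hand side $\frac{(n-1)(n-2)}{2r^2}(1-u^2)V$ contributes $-(n-1)(n-2)\hat u r^{-\nu-2}$ through the $-2 u_{\ell=2}$ piece of $1-u^2$. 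This yields the stated expression $\hat V = -\frac{(n-1)(n-2)}{\nu(\nu+1)}\hat u$.

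At the next order $r^{-2n}$ several contributions must be combined: the direct coupling with $\dddot u r^{2-2n}$; products of $\dot u r^{1-n}$ and $\dot V r^{1-n}$ projected to $\mathcal{H}_{\ell=2}$ which appear in $(1-u^2)V$, in $u^{-1}u_r V_r$, and in $\na^a u\,\na_a V$; and the coefficient $(2-2n)(1-2n) = 2(n-1)(2n-1)$ multiplying $\dddot V$ in $V_{rr}|_{\ell=2}$. Two ingredients are essential: Proposition \ref{product of first eigenfunctions}, giving $(\na\dot u\cdot\na\dot V)_{\ell=2} = -(\dot u\dot V)_{\ell=2}$, and the identity $\dot V = -\frac{n-2}{n}\dot u$ from Lemma \ref{Vdot}, which converts $(\dot u\dot V)_{\ell=2}$ into a scalar multiple of $(\dot u^2)_{\ell=2}$. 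Collecting contributions yields a linear algebraic equation for $\dddot V$ in terms of $\dddot u$ and $(\dot u^2)_{\ell=2}$ whose solution is the stated formula. Upgrading $W$ from pointwise decay to $O_2(r^{2-2n})$ then follows the pattern of Lemma \ref{Vdot}: apply \eqref{ra} to control $\na_a W$ and $\pl_r \na_a W$, and then \eqref{ab} to control $\na_a\na_b W$.

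The main obstacle I anticipate is the careful bookkeeping at $r^{-2n}$, where sub-leading cross terms in the expansions of $u^{-1}u_r$, $Vu^{-1}$, and $(u-1)^2$ all potentially contribute to the $\ell=2$ projection. A useful cross-check is the traceless part of \eqref{ab}, which simplifies to $(\na_a\na_b V)_{tf} = -Vu^{-1}(\na_a\na_b u)_{tf}$ and provides an independent constraint relating $\hat V$ and $\dddot V$ to $\hat u$ and $\dddot u$; comparing it with the relations from \eqref{rr} will also be useful later when the static equations are shown to force $\hat u = 0$.
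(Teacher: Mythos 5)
Your plan is essentially the paper's proof of this lemma: substitute the expansions of $u$ and $V$ into \eqref{rr}, project onto $\mathcal{H}_{\ell=2}$, extract $\hat V$ at order $r^{-\nu-2}$ and $\dddot V$ at order $r^{-2n}$ using Proposition \ref{product of first eigenfunctions} to simplify the gradient product, and integrate; the derivative upgrade via \eqref{ra} and \eqref{ab} likewise mirrors Lemma \ref{Vdot}. The cross-check against the traceless $ab$-equation is a reasonable addition not present in the paper.

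One item should be nailed down before asserting the stated coefficient of $\dddot V$. Writing $1-u^2 = -2(u-1)-(u-1)^2$, the $\ell=2$ part of $\frac{(n-1)(n-2)}{2r^2}(1-u^2)V$ at order $r^{-2n}$ receives three separate pieces: $-2\dddot u$ from $-2(u-1)$, $-2(\dot u\dot V)_{\ell=2}$ from $-2(u-1)(V-1)$, and $-(\dot u^2)_{\ell=2}$ from $-(u-1)^2$. Your enumeration explicitly names only products of $\dot u$ and $\dot V$, relegating $(u-1)^2$ to a warning at the end, and the paper's own displayed expansion of the right-hand side of \eqref{rr} lists only the $\dddot u$ and $(\dot u\dot V)_{\ell=2}$ terms. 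Since $\dot V=-\frac{n-2}{n}\dot u$, all pieces ultimately combine into multiples of $\dddot u$ and $(\dot u^2)_{\ell=2}$, but the numerical coefficient of $(\dot u^2)_{\ell=2}$ in $\dddot V$ changes if the $-(u-1)^2$ contribution is included. That coefficient feeds directly into the algebraic identity $(\dddot V+\dddot u)_{\ell=2}=0$ used in Section 5 to conclude $\dot u=0$, so this bookkeeping should be carried out in full rather than asserted; as written, "whose solution is the stated formula" is not yet established.
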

\begin{proof}
Consider the $\ell \ge 2$-modes of the $rr$-component of the static equations with $V = 1 - m r^{2-n} + \dot V r^{1-n} - \frac{1}{2} m^2 r^{4-2n} + f$ and $u = 1 + m r^{2-n} + \dot u r^{1-n} + \frac{3}{2}m^2 r^{4-2n} + O_\infty(r^{-n-\frac{2}{n-1}})$. The $\ell\ge 2$ modes of the left-hand side are given by
\begin{align*}
(f_{\ell\ge 2})_{rr} -(n-1)^2 (\dot u \dot V)_{\ell=2} r^{-2n} - (\na^a \dot u \na_a \dot V)_{\ell=2} r^{-2n} + O(r^{-2n-\frac{2}{n-1}})
\end{align*}
while the $\ell=2$ mode of the right-hand side is given by
\begin{align*}
- (n-1)(n-2) (\dot u \dot V)_{\ell=2} - (n-1)(n-2)\hat u r^{-2-n-\frac{2}{n-1}} - (n-1)(n-2) \dddot u r^{2-2n} \\+ O(r^{-2n - \frac{2}{n-1}}).
\end{align*}
By Proposition \ref{product of first eigenfunctions}, we get
\begin{align*}
( f_{\ell=2} )_{rr} = - (n-1)(n-2)\hat u r^{-2-n-\frac{2}{n-1}} + (n-2)(\dot u \dot V)_{\ell=2} r^{-2n} - (n-1)(n-2) \dddot u r^{2-2n} \\ + O(r^{-2n - \frac{2}{n-1}})
\end{align*}
and the assertion follows from integration.
\end{proof}

We are in position to prove that $\dot u$ is constant. In $3$-dimension, the leading order of the $ab$-component of the static equations \eqref{ab} is at $r^{-4}\log r$ and is given by 
\[ \na_a \na_b (\hat V + \hat u) = g_1 \sigma_{ab} \]
for some function $g_1$. Therefore, $\hat V + \hat u$ is in $\mathcal{H}_{\ell\le 1}$ by Proposition \ref{characterizing Hle1}. By Lemma \ref{V_3}, both $\hat u$ and $\hat V$ are in $\mathcal{H}_{\ell\le 1}$. Theorem \ref{u_3}, Proposition \ref{udot_square_l=2}, and the fact that $\dot u \in \mathcal{H}_{\ell=1}$ imply $\dot u_{\ell=1}=0$.

In higher dimensions, we consider the order $r^{2-2n}$. Let the coefficients of $V$ and $u$ at this order be $\tilde V$ and $\tilde u$. As $Vu^{-1} = 1 - 2mr^{2-n} + O(r^{1-n})$ and $u_{\ell\ge 2}$ starts $r^{-n-\frac{2}{n-1}}$ we see that at the order $r^{2-2n}$, $-Vu^{-1}\na_a\na_b u$ gives rise to $-\na_a\na_b \tilde u r^{2-2n}.$ The $ab$-component of the static equations thus implies 
\begin{align*}
\na_a\na_b \lt( \tilde V + \tilde u \rt) = g_3 \sigma_{ab}
\end{align*}
for some function $g_3$. Therefore $\tilde V + \tilde u$ is in $\mathcal{H}_{\ell\le 1}$ by Proposition \ref{characterizing Hle1} and in particular its $\ell=2$ mode vanishes:
\begin{align*}
\lt[ \lt( 1 - \frac{n-2}{2(2n-1)}\rt)\frac{ \frac{3}{2}(n-1)(n-2)+4n}{n^2-3n} - \frac{(n-2)^2}{n(2n-1)(2n-2)} \rt] (\dot u^2)_{\ell=2} =0.
\end{align*} 
As the coefficient is positive when $n \ge 4$, we get $(\dot u^2)_{\ell=2}=0$ and hence $\dot u_{\ell=1}=0$ again by Proposition \ref{udot_square_l=2}.

Having shown that $\dot u$ is constant in all dimensions, it follows that $\dot V $ is also constant in all dimensions by Lemma \ref{V_3} and Lemma \ref{V_high} respectively.
 
\section{Proof of Theorem \ref{main} and Theorem \ref{equality case}}\label{section:proof of main theorem}
\subsection{Rotational symmetry outside a compact set}
Given a function $f(r,\theta)$, define 
\begin{align*}
\check f(r,\theta) = f(r,\theta) - \lt( \frac{1}{|S^{n-1}|} \int_{S^{n-1}} f \rt)(r)
\end{align*}
to be its non-rotationally symmetric part. Consider $\check u$ and $\check V$. From the definition, we have $\pl_a \check u = \pl_a u$, $\pl_a \check V = \pl_a V$, and  \begin{align}\label{cucV no 0 mode}
\int_{S^{n-1}} \check u(r, \cdot) = \int_{S^{n-1}} \check V(r, \cdot)=0 \mbox{ for any } r.
\end{align} 
We will write 
\begin{align*}
u = u_0(r) + \check u, \quad V = V_0(r) + \check V
\end{align*}
The main result of this subsection is
\begin{lem}\label{vanish out a compact set}
There exists a constant $r_1 > r_0$ that depends only on $n,m,$ and $r_0$ such that $\cu = \cV =0$ on $[r_1,\infty) \times S^{n-1}$.
\end{lem}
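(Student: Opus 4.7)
The plan is to promote the formal per-order inductive argument sketched in the introduction into rigorous quantitative estimates on $\breve u$ and $\breve V$, successively improving their rate of decay at infinity until it is faster than any polynomial; a unique-continuation argument at infinity then forces vanishing on $[r_1,\infty) \times S^{n-1}$.

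First I would extract from the $ra$-, $ab$- (trace-free and trace parts), and $rr$-components of the static equations in Proposition \ref{derivation of static equations} a closed system of algebraic and differential relations between $\breve u$, $\breve V$, and their radial and angular derivatives. The $ra$-equation \eqref{ra}, integrated radially from infinity using the decay from Section 5, expresses $\pl_a \breve V$ in terms of $\pl_a \breve u$ plus quadratic remainders. The trace-free part of \eqref{ab} forces the trace-free Hessian of a specific linear combination of $\breve V$ and $\breve u$ to equal a quadratic remainder; by Proposition \ref{characterizing Hle1}, the projection of that combination onto $\mathcal{H}_{\ell\ge 2}$ is controlled purely by the quadratic error. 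The trace of \eqref{ab} combined with \eqref{rr} closes the system. Projecting onto each spherical-harmonic mode $\ell \ge 2$ produces an ODE in $r$ whose indicial roots force any homogeneous solution to decay strictly faster than the forcing, thereby quantifying the algebraic identity $(2-k)u^{(-k)} = \mathrm{const}$ from the formal argument.

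Second I would set up a bootstrap on the decay rate. By the results of Section 5 together with $\dot u = \dot V = 0$, the base estimate $\breve u, \breve V = O_2(r^{-N_0})$ holds for some $N_0 > n-1$. Plugging this into the system from Step 1, the quadratic remainders are of order $O(r^{-2N_0})$, which is strictly smaller than the linear decay improvement $O(r^{-N_0-1})$ produced by the indicial-root analysis whenever $r$ is sufficiently large. Iterating this mechanism, on $r \ge r_1$ with $r_1$ depending only on $n$, $m$, and $r_0$, one obtains $\breve u, \breve V = O(r^{-N})$ for every $N$. Finally, super-polynomial decay of $(\breve u, \breve V)$ together with the quasilinear elliptic structure of the coupled static system permits a Carleman-type unique-continuation-at-infinity estimate, which concludes $\breve u \equiv \breve V \equiv 0$ on $[r_1, \infty) \times S^{n-1}$.

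The main obstacle will be making the bootstrap in Step 2 uniform across the infinitely many spherical-harmonic modes $\ell \ge 2$ simultaneously. The quadratic nonlinearities in the static equations couple distinct modes through products of spherical harmonics, so rapid decay of a single mode does not automatically propagate to all modes; moreover, the constants produced by the mode-$\ell$ indicial analysis depend on $\ell$, which must be controlled to keep the decay improvement quantitative in a pointwise or $L^\infty$ sense. A secondary subtlety is the presence of non-analytic corrections in the rotationally symmetric background $u_0$ and $V_0$ (an $r^{-4}\log r$ term in dimension three, an $r^{-n-2/(n-1)}$ term in higher dimensions, as identified in Section 4); these corrections are purely radial and so enter only as coefficients in the ODE for each mode of $\breve u$, but their contribution must be tracked carefully at every iteration so that they do not disrupt the indicial-root gain.
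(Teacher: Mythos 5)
Your plan shares the paper's starting point (the system of static equations in Proposition \ref{derivation of static equations} restricted to the non-rotationally-symmetric parts $\cu,\cV$, and the base decay from Section 5 after $\dot u=\dot V=0$), but it diverges at the two places where you yourself flag trouble, and the paper's route neatly avoids both.

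First, the paper never projects onto individual spherical-harmonic modes $\ell$. Instead, by combining the $ra$-, $ab$-, $rr$- and scalar curvature equations it \emph{eliminates every angular Laplacian} acting on $\cu$ and $\cV$, leaving the single first-order \emph{radial} ODE
\[
r\cu_r + (n-1)\cu = F + \tfrac{G}{n-2} + I,
\]
with the error side quadratic in $(\cu,\cV)$ and their derivatives, and a companion first-order radial equation for $\na\cV$ coming from \eqref{ra}. This ODE holds pointwise on $S^{n-1}$, so the estimates are run in $C^{k,\alpha}(S^{n-1})$ norms directly; there is no mode-dependent indicial constant to control, and your worry about uniformity across infinitely many modes simply does not arise.

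Second, and more importantly, the bootstrap is not run on the \emph{decay rate} but on the \emph{constant}. After one integration from infinity of the ODE above (the homogeneous solution $r^{1-n}$ is excluded by the a priori decay $\cu=O(r^{-n})$), the bound
\[
\|r^n\cu\|_{C^{3,\alpha}},\ \|r^{n+1}\cu_r\|_{C^{1,\alpha}},\ \|r^n\na\cV\|_{C^{0,\alpha}},\ \|r^{n+1}\na\cV_r\|_{C^{0,\alpha}} \ \le\ \tfrac{c}{r_1^{\,n-2}}\,M
\]
is recovered on $[r_1,\infty)$ at the \emph{same} decay rate $r^{-n}$, with the original constant $M$ multiplied by $c r_1^{2-n}$. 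Because the error terms $F,G,I$ are of the schematic form $O(r^{2-n})\cdot(\cu,\na\cu,\dots,\cV,\na\cV)$, this structure is preserved and the step can be iterated verbatim, giving $M\mapsto (c r_1^{2-n})^k M$ for all $k$. Once $r_1$ is chosen so large that $c r_1^{2-n}<1$, the constant contracts to zero and $\cu=\na\cV\equiv 0$ on $[r_1,\infty)\times S^{n-1}$ follows \emph{immediately}, with $\cV\equiv 0$ then coming from its zero-mean normalization. No Carleman estimate is needed, and this is the step in your plan that I would consider a genuine gap: super-polynomial decay of a solution to a coupled quasi-linear elliptic system does not by itself imply vanishing near infinity (solutions can decay like $e^{-r}$), and a Carleman-type unique continuation at infinity for this system would require verifying pseudo-convexity of a weight adapted to the quasi-spherical geometry, which is not addressed and is considerably harder than the contraction argument the paper uses. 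The paper's organization--fix the polynomial rate, contract the prefactor--sidesteps this entirely, and your bootstrap estimates, if carried out, would actually already contain the required contraction once you track the constants.
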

\begin{proof}

We begin by deriving the equations satisfied by $\cu$ and $\cV$. Firstly, multiplying the $ra$-component of the static equation \eqref{ra} by $u^{-2}$ and rewrite it as
\begin{align*}
r \pl_a \lt( u^{-2} V_r \rt) + ru^{-3} V_r \pl_a \cu - \lt( u^{-2}-1\rt) \pl_a \cV - \pl_a \cV = (n-2)\lt( Vu^{-3}-1 \rt)\pl_a \cu + (n-2)\pl_a \cu.
\end{align*}
Hence, we have
\begin{align}
ru^{-2} V_r - \cV - (n-2) \cu = a(r) + F \label{ra_rot}
\end{align}
where $F$ and $a(r)$ are uniquely determined by 
\begin{align}\label{F}
\pl_a F = - r u^{-3} V_r \pl_a \cu + (u^{-2}-1)\pl_a\cV + (n-2)(Vu^{-3}-1)\pl_a\cu, \quad \int_{S^{n-1}} F(\cdot, r) =0 \mbox{ for any }r.
\end{align}
Secondly, rewriting the $ab$-component of the static equations \eqref{ab} as
\begin{align*}
\na_a \na_b \cV + \na_a\na_b \cu = (1 - Vu^{-1})\na_a\na_b \cu + \lt[ -ru^{-2} V_r + V ru^{-3}u_r + (n-2)V (1 - u^{-2}) \rt]\sigma_{ab}
\end{align*}
and taking the divergence, we get
\begin{align*}
\na_a (\Delta+n-2)(\cV+\cu) &= \na^b \lt[ (1-Vu^{-1})\na_a\na_b \cu \rt] \\ &\qquad + \pl_a \lt[ -ru^{-2} V_r + V ru^{-3}u_r + (n-2)V(1 - u^{-2}) \rt]
\end{align*}
and hence
\begin{align}\label{ab_rot}
(\Delta+n-2)(\cV+\cu) = b(r) - ru^{-2}V_r + Vru^{-3}u_r + (n-2)V(1 - u^{-2}) + G
\end{align}
where $G$ and $b(r)$ are uniquely determined by
\begin{align}\label{G}
\na_a G = \na^b \lt[ (1-Vu^{-1})\na_a\na_b \cu \rt], \quad \int_{S^2} G(\cdot, r)=0 \mbox{ for any }r.
\end{align}
Thirdly, adding the equation $\Delta_g V =0$ and the $rr$-component of the static equation yields
\begin{align}\label{rr_rot}
\Delta \cV + (n-1)r u^{-2} V_r + \frac{(n-1)(n-2)}{2}(u^{-2}-1)V =0.
\end{align}
Finally, rewrite the zero scalar curvature equation
\begin{align}\label{zeroscalar_rot}
\Delta \cu + (u^2-1)\Delta \cu - (n-1)r u_r + \frac{(n-1)(n-2)}{2} (u - u^3) =0.
\end{align}

Subtracting equations \eqref{rr_rot} and \eqref{zeroscalar_rot} from the equation \eqref{ab_rot} to eliminate $\Delta \cV$, $\Delta \cu$ and adding $(n-2)$ times equation \eqref{ra_rot} to further eliminate $\cV, ru^{-2}V_r$, we obtain
\begin{align*}
&-(n-2)(n-3)\cu  - \frac{(n-2)(n-3)}{2} (u^{-2}-1)V + (n-1)r u_r - \frac{(n-1)(n-2)}{2}(u - u^3) \\
&= (n-2)F + (n-2)a(r) + b(r) + Vr u^{-3} u_r + G + (u^2-1)\Delta\cu \end{align*} 
Extracting the leading term (the first term on the right-hand side of each line) \begin{align*}
V r u^{-3} u_r &= ru_r + (Vu^{-3}-1) ru_r,\\
ru_r &= r \cu_r + r(u - \cu)_r,\\
(u^{-2}-1)V &= (u^{-2}-1 + 2\cu - 2 \cu)V = -2 \cu + 2 (1-V) \cu + (u^{-2}-1+2\cu ) V,
\end{align*}
we get
\begin{align*}
(n-2) r \cu_r + (n-1)(n-2) \cu &= (n-2)a(r) + b(r) + (n-2)F + G \\
&\quad + (Vu^{-3}-1) r u_r  - (n-2)r (u - \cu)_r \\ & \quad + (u^2-1)\Delta \cu + \frac{(n-1)(n-2)}{2}(u - u^3 + 2\cu). 
\end{align*}
We now abuse the notion of modes: For a function $f(r,\theta)$, $f_{\ell=0}(r)$ is the function with $\int_{S^{n-1}} [f(r,\cdot) - f_{\ell=0}(r)] =0$ for any $r$ and $f_{\ell\ge 1}(r,\theta) = f(r,\theta) - f_{\ell=0}(r)$. In this terminology, $\cu, \cu_r, F, G$ all have no 0-mode and we obtain, projecting the above formula onto $\ell \ge 1$ modes, 
\begin{align}\label{ODE for cu}
r \cu_r + (n-1) \cu &= F+ \frac{G}{n-2} + I 
\end{align}
where
\begin{align*}
I = \lt[ \frac{1}{n-2}(Vu^{-3}-1)ru_r + \frac{1}{n-2} (u^2-1)\Delta \cu + \frac{n-1}{2}(u - u^3 + 2\cu) \rt]_{\ell\ge 1}.
\end{align*}
In the following, we write $C^{k,\alpha} = C^{k,\alpha}(S^{n-1})$ for the H\"{o}lder space on $S^{n-1}$. Therefore, $\| \check u\|_{C^{k,\alpha}}$ is a function of $r$. In the previous section we have shown that $u$ and $V$ are rotationally symmetric up to the order $r^{1-n}$. Moreover, the next order appears at $r^{-3}$ for $n=3$ and $r^{-n-\frac{2}{n-1}}$ for $n \ge 4$. With the estimates of remainders for $u$ (Theorem \ref{u_3} and Lemma \ref{u_higher}) and $V$ (Lemma \ref{V_3} and Lemma \ref{V_high}) we see there exist constants $r_1 \gg 1$ and $M$ such that\footnote{In fact we have $\| \check u\|_{C^{k,\alpha}} \le M_k r^{-n}$ for all $k \ge 3$ (similarly for $\| \check u_r\|_{C^{k,\alpha}}$) but we only need 3-derivatives below.}
\begin{align}
\| \cu\|_{C^{3,\alpha}} \le M r^{-n}\label{initial bound cu},\\
\| \cu_r \|_{C^{1,\alpha}} \le M r^{-n-1}, \label{initial bound cur} \\
\| \na\cV \|_{C^{0,\alpha}} \le M r^{-n}, \label{initial bound cV}\\
\| \na\cV_r \|_{C^{0,\alpha}} \le M r^{-n-1}\label{initial bound cVr}
\end{align}
for $r \ge r_1$. 

\begin{lem}\label{error estimate} The right-hand side of \eqref{ODE for cu} is an error term that decays faster than $\cu$ by $r^{2-n}$.
\begin{align*}
\lt\| F + \frac{G}{n-2} + I \rt\|_{C^{0,\alpha}} \le c M r^{-2n +2}.
\end{align*}
\end{lem}
\begin{proof}
In the proof $c$ will denote a constant that depends only on $m, n$ and changes from line to line.

Recalling that the definition of $\pl_a F$ \eqref{F} does not involve radial derivatives $\check u_r$ and $\check V_r$, we have by \eqref{initial bound cu} and \eqref{initial bound cV}
\begin{align*}
\| \na F\|_{C^{0,\alpha}} \le c M r^{-2n+2}.
\end{align*}
Since $F \in \mathcal{H}_{\ell\ge 1}$, $F$ vanishes somewhere on $S^{n-1}$ and its sup norm is thus bounded by that of its gradient times the diameter of $S^{n-1}$:
\begin{align*}
\| F \|_{C^{0,\alpha}} \le c M r^{-2n+2}.
\end{align*}
Similarly, using the bounds up to 3 derivatives \eqref{initial bound cu}, we get \begin{align*}
\| \na G\|_{C^{0,\alpha}} \le cM r^{-2n+2} , \quad \| G \|_{C^{0,\alpha}} \le c M r^{-2n+2}.
\end{align*} 
The last two terms of $I$ are
\begin{align}\label{last 2 term of I}
\begin{split}
(u^2-1)\Delta \cu &= \lt( \frac{2m}{r^{n-2}} + O(r^{1-n}) \rt) \Delta \cu\\
(u - u^3 + 2\cu)_{\ell\ge 1} &= \lt[ -3(u_0^2-1) \cu - 3 u_0 \cu^2 - \cu^3 \rt]_{\ell\ge 1}.
\end{split} 
\end{align}
For the first term of $I$, the binomial theorem implies
\begin{align}
u^{-3} = u_0^{-3} - 3 u_0^{-4} \cu + \mathfrak{Q}, \quad \| \mathfrak{Q}\|_{C^{0,\alpha}} \le c \| \cu\|_{C^{0,\alpha}}^2 
\end{align}
Hence,
\begin{align*}
(Vu^{-3}-1)ru_r = (V_0 u_0^{-3}-1) ru_r + \lt( u_0^{-3} \cV - 3 V_0 u_0^{-4} \cu + \mathfrak{Q} \rt)ru_r.
\end{align*}
As $(V_0u_0^{-3}-1) = -\frac{4m}{r^{n-2}} + O(r^{1-n})$ and $ru_r =  \frac{(2-n)m}{r^{n-2}} + O(r^{1-n})$, we have
\begin{align}\label{first term of I}
\begin{split}
\lt[ (V_0 u_0^{-3}-1) ru_r \rt]_{\ell\ge 1} &= \lt( -\frac{4m}{r^{n-2}} + O(r^{1-n})\rt) r \cu_r \\
\| \lt( u_0^{-3} \cV - 3 V_0 u_0^{-4} \cu + \mathfrak{Q} \rt)ru_r \|_{C^{0,\alpha}} &\le c r^{2-n} ( \| \cV\|_{C^{0,\alpha}} + \| \cu \|_{C^{0,\alpha}}).
\end{split}
\end{align}
Note that $\| \cV \|_{C^{0,\alpha}} \le cM r^{-n}$  by \eqref{initial bound cV} because $\cV$ takes both positive and negative values. Therefore, by \eqref{initial bound cu} and \eqref{initial bound cur}, we get
\begin{align*}
\| I\|_{C^{1,\alpha}} \le c r^{-2n+2} M.
\end{align*}
\end{proof}

Now we integrate \eqref{ODE for cu} \[ ( r^{n-1} \cu )_r = r^{n-2}(F+\frac{G}{n-2}+I)\] from $r$ to $\infty$. Note that {\underline{there is no constant  of integration from $\infty$}} by \eqref{initial bound cu} and we obtain the improved bounds
\begin{align}\label{cu C0alpha}
\| \cu \|_{C^{0,\alpha}} \le c M r^{-2n+2}, \quad\| \cu_r\|_{C^{0,\alpha}} \le c M r^{-2n+1} 
\end{align}
where the latter follows from $r\cu_r = -(n-1) \cu + F+ \frac{G}{n-2}+I$.

For higher derivative estimates, we differentiate \eqref{ODE for cu} by $\pl_a$ and use the bounds on 3 derivatives to get
\begin{align*}
\| \cu\|_{C^{1,\alpha}} \le c Mr^{-2n+2}, \quad \| \cu_r\|_{C^{1,\alpha}} \le c Mr^{-2n+1}.
\end{align*}
Next, differentiate the zero scalar curvature equation by a Killing vector $X$ on the sphere to get
\begin{align*}
u^2 \Delta (X\cu) = - 2uX(\cu) \Delta\cu + 2r (X\cu)_r - X\cu + 3u^2 X\cu.
\end{align*}
The $C^{0,\alpha}$ norm of the right-hand side is bounded by $cr^{-2n+2} M$ and hence Schauder estimates imply $\| Xu\|_{C^{2,\alpha}} \le c r^{-2n+2} M$.

For $\na V$ and $\na V_r$, we view the $ra$-component of the static equation
\begin{align*}
r ( \pl_a \cV)_r - \pl_a \cV = \lt(  r u^{-1} V_r + (n-2) V u^{-1} \rt) \pl_a \cu
\end{align*}
as an ODE of $\na V$. After integration, the improved decay of $\cu$ implies
\[ \| \na \cV \|_{C^{0,\alpha}} \le c M r^{-2n+2}, \quad \| \na \cV_r \|_{C^{0,\alpha}} \le c M r^{-2n+1}. \]

In conclusion, the initial bounds are improved to
\begin{align*}
\| r^{n} \cu\|_{C^{3,\alpha}}, \| r^{n+1} \cu_r\|_{C^{1,\alpha}}, \| r^{n} \na\cV\|_{C^{0,\alpha}}, \| r^{n+1} \na\cV_r\|_{C^{0,\alpha}} \le \frac{c}{(r_1)^{n-2}} M
\end{align*}
on $r \ge r_1$. Finally we observe that the error terms $F,G,I$ are of the form 
\begin{align}\label{reason can iterate}
F,G,I = O(r^{2-n})\cdot \lt( \cu, \na\cu, \na^2\cu, \na^3\cu, \cu_r \rt) +  O(r^{2-n}) \cdot \lt( \cV + \na\cV \rt)
\end{align}
from Lemma \ref{error estimate}. Therefore we can iterate the estimates with improved bounds on $\cu$ and $\cV$ to get
\begin{align*}
\| r^{n} \cu\|_{C^{3,\alpha}}, \| r^{n+1} \cu_r\|_{C^{1,\alpha}}, \| r^{n} \na\cV\|_{C^{0,\alpha}}, \| r^{n+1} \na\cV_r\|_{C^{0,\alpha}} \le \lt( \frac{c}{(r_1)^{n-2}}\rt)^k M
\end{align*}
for any $k \ge 0$ inductively. This implies $\cu=0$ and $\na \cV=0$ on $r \ge r_1$ if $c < (r_1)^{n-2}$. Since $\cV$ has no zero mode, we also have $\cV=0$ on $r \ge r_1$. This completes the proof of Lemma \ref{vanish out a compact set}.
\end{proof} 

\subsection{Proof of rotational symmetry and Theorem \ref{main}}
We now have $u = u(r)$ and $V = V(r)$ on $[r_1,\infty) \times S^{n-1}$ for some $r_1 > r_0$. We observe that the rotational symmetry of $V$ on the domain of definition $(r_0,\infty) \times S^{n-1}$ and the exact form asserted in Theorem \ref{main} follows from the rotational symmetry of $u$ (the metric part) on $(r_0,\infty) \times S^{n-1}$: 
\begin{prop}\label{rotationally symmetric u,V}
Let $g = u^2(\theta,r)dr^2 + r^2 g_{S^{n-1}}$ be a solution to the static equations \eqref{static} on some $(r_0,\infty) \times S^{n-1}$. Suppose $u$ is rotationally symmetric and $u  \rw 1, V = 1 + o_1(1)$. Then
\begin{align*}
u = \frac{1}{\sqrt{1 - 2m_0 r^{2-n}}}, \quad V = \sqrt{1 - 2m_0 r^{2-n}}
\end{align*}
for some constant $m_0 \in \R$.
\end{prop}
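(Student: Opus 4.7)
My plan is to reduce the static equations to a pair of ODEs in $r$ alone and integrate them in closed form. Since $u$ is rotationally symmetric by assumption, the only preliminary step is to show that the boundedness condition $V = 1 + o_1(1)$ forces $V$ to be rotationally symmetric as well. With $u_a \equiv 0$, the $ra$-component \eqref{ra} collapses to the linear ODE $\pl_r(\pl_a V) = \frac{1}{r}\, \pl_a V$ for each angular index $a$, whose general solution is $\pl_a V(r,\theta) = C_a(\theta)\, r$. The asymptotic condition on $V$ forbids the angular derivatives (with respect to the round sphere metric) from growing linearly in $r$, so $C_a \equiv 0$ and $V = V(r)$.

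With $u = u(r)$, the zero scalar curvature equation \eqref{zero scalar curvature} becomes the separable ODE $r u_r = \frac{n-2}{2}\, u(1 - u^2)$. Elementary integration, together with the constraints $u > 0$ and $u \rw 1$, gives $u^2 = (1 - 2m_0 r^{2-n})^{-1}$ for some $m_0 \in \R$, where $m_0 = 0$ corresponds to $u \equiv 1$.

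Finally, with $u$ and $V$ both depending only on $r$, the angular Hessians in the $ab$-component \eqref{ab} vanish, reducing that equation to the scalar first-order ODE
\[ r u^{-2} V_r = V u^{-3} (r u_r) + (n-2) V (1 - u^{-2}). \]
Substituting the ODE for $u$ collapses the right-hand side, yielding $\frac{V_r}{V} = \frac{(n-2)(u^2-1)}{2r}$, which integrates explicitly to $V = C\sqrt{1 - 2m_0 r^{2-n}}$, and the normalization $V \rw 1$ fixes $C = 1$. The only genuinely delicate step is the first one: ruling out angular linear growth of $V$ requires the asymptotic hypothesis and not merely the static equations, which is precisely why the conclusion fails for unbounded $V$, as noted in the remark following Theorem \ref{main}.
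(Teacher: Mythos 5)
Your proof is correct, but it takes a genuinely different route from the paper's. You first use the $ra$-component \eqref{ra} (which, with $u_a\equiv 0$, collapses to $\partial_r(\partial_a V)=\tfrac{1}{r}\partial_a V$) to conclude $\partial_a V = C_a(\theta)\,r$ and invoke the decay hypothesis $V=1+o_1(1)$ to kill $C_a$; with $V=V(r)$ in hand, the $ab$-component \eqref{ab} reduces to a \emph{first-order} scalar ODE which, after substituting $ru_r=\tfrac{n-2}{2}u(1-u^2)$, becomes $\tfrac{V_r}{V}=\tfrac{(n-2)(u^2-1)}{2r}$ and integrates directly. The paper instead never separately establishes rotational symmetry of $V$: it treats the $rr$-component \eqref{rr} (with the angular term dropped) as a second-order linear ODE in $r$ for each fixed $\theta$, exhibits $V_0=\sqrt{1-2m_0r^{2-n}}$ as one solution, shows the Wronskian $W$ of $V$ and $V_0$ satisfies $(u^{-1}W)_r=0$, and uses $V=1+o_1(1)$ to force $W\to 0$ hence $W\equiv 0$, whence $V(r,\theta)=c(\theta)V_0(r)$ and the normalization fixes $c\equiv 1$. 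Your approach is more elementary (no Wronskian, no need to know $V_0$ in advance — it just integrates out) and it isolates precisely where the boundedness hypothesis enters, a point you correctly tie to the remark after Theorem \ref{main} about the unbounded solution $V=x_i$ on flat space, which is exactly the $C_a(\theta)\,r$ mode you are discarding. The paper's Wronskian argument is slightly more compact because it handles the angular dependence of $V$ and the radial ODE in one stroke, at the cost of requiring a known particular solution.
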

\begin{proof}
As $u$ is rotationally symmetric, the zero scalar curvature equation, which decouples from the static equations, becomes an ODE and integration yields
\[ u = \frac{1}{\sqrt{1 - 2m_0 r^{2-n}}} \]
for some constant $m_0 \in \R$. The $rr$-component of the static equations becomes
\begin{align*}
V_{rr} - u^{-1} u_r V_r = \frac{(n-1)(n-2)}{2r^2} (1-u^2)V
\end{align*}
which is viewed as an ODE in each direction $\theta$. Let $V_0 = \sqrt{1 - 2m_0 r^{2-n}}$ be a solution to the above ODE. Then the Wronskian
\begin{align*}
W =\begin{vmatrix}
V & V_0 \\
V_r & (V_0)_r
\end{vmatrix}
\end{align*}
satisfies $W_r - u^{-1}u_r W =0$ or equivalently $(u^{-1}W)_r =0$. The assumption $V = 1 + o_1(1)$ implies $\lim_{r\rw\infty}W (r,\theta)=0$ and hence $W$ vanishes identically. It follows that $V = V_0$.
\end{proof}

Now it remains to push the rotational symmetry of $u$ from $[r_1,\infty) \times S^{n-1}$ to $(r_0,\infty) \times S^{n-1}$. This is a problem of unique continuation and we supply three proofs.

\begin{proof}[Proof 1.]
Consider the set 
\begin{align*}
S = \{ r_2 \in (r_0,\infty) : \cu = \cV =0 \mbox{ on } [r_2,\infty) \times S^{n-1} \}
\end{align*} 
We have shown that $S$ is not empty. By continuity, $S$ is a closed set. We will show that $S$ is open to conclude the rotational symmetry. Fix $r_2 \in S$ and a small constant $\delta$ such that 
\begin{align}\label{another initial bound}
\| \cu\|_{C^{3,\alpha}} + \| \cu_r \|_{C^{1,\alpha}} + \| \na\cV \|_{C^{0,\alpha}} + \| \na \cV_r\|_{C^{0,\alpha}} \le 1
\end{align}
on $[ r_2 - \delta, r_2] \times S^{n-1}$. The argument in Lemma \ref{error estimate} now implies that, in analogous to \eqref{reason can iterate},
\begin{align*}
F,G,I = O(1) \cdot \lt( \cu, \na\cu, \na^2\cu, \na^3\cu, \cu_r \rt) +  O(1) \cdot \lt( \cV + \na\cV \rt)
\end{align*}
and hence
\begin{align*}
\lt| r \cu_r + (n-1)\cu \rt| \le c
\end{align*}
by \eqref{another initial bound}.

Now integration from $r_2$ yields
\begin{align*}
r^{n-1} |\cu|   \le c \lt( r_2^{n-1} - r^{n-1} \rt)
\end{align*}
on $[r_2 - \delta, r_2] \times S^{n-1}$
and then
\begin{align*}
|\cu| \le c \lt[ \lt( \frac{r_2}{r_2 - \delta}\rt)^{n-1} -1 \rt] \le c\delta
\end{align*}
on $[r_2 - \delta, r_2] \times S^{n-1}$.

Higher derivative estimates can be done as above and we arrive at the improved bounds
\begin{align*}
\| \cu\|_{C^{3,\alpha}} + \| \cu_r \|_{C^{1,\alpha}} + \| \na\cV \|_{C^{0,\alpha}} + \| \na \cV_r\|_{C^{0,\alpha}} \le c\delta
\end{align*}
and iteration yields
\begin{align*}
\| \cu\|_{C^{3,\alpha}} + \| \cu_r \|_{C^{1,\alpha}} + \| \na\cV \|_{C^{0,\alpha}} + \| \na \cV_r\|_{C^{0,\alpha}} \le ( c\delta )^k
\end{align*}
for any $k \ge 0$. Therefore $\cu = \cV =0$ on $[r_2 - \delta, r_2] \times S^{n-1}$ for $\delta < \frac{1}{c}$. This completes the proof of openness of $S$.
\end{proof}  

\begin{proof}[Proof 2.]
Consider the change of variable $r = e^{(n-1)t}$. Then the zero scalar curvature equation becomes
\begin{align}
u^2 \Delta u - \pl_t u + \frac{(n-1)(n-2)}{2}(u - u^3)=0.
\end{align}
Let 
\begin{align*}
u_0 = \frac{1}{\sqrt{1 - 2m e^{-(n-1)(n-2)t}}}
\end{align*}
be the rotationally symmetric solution to the above equation that coincides with $u$ on $[r_1,\infty) \times S^{n-1}$. We see that the difference $ v = u - u_0$
\begin{align*}
\pl_t v &= \pl_t u - \pl_t u_0 \\
&= u^2 \Delta u + \frac{(n-1)(n-2)}{2} (u - u^3 - u_0 + u_0^3)\\
&= u^2 \Delta v + \frac{(n-1)(n-2)}{2} \lt(1 -(u^2 + u_0 u + u_0^2) \rt) v 
\end{align*} is a solution to a {\it linear parabolic equation}. Since $v =0$ on $\{ r_1\} \times S^{n-1}$, $v=0$ on $(r_0,\infty) \times S^{n-1}$ by the following weak unique continuation theorem of Lees-Protter \cite[Theorem 1, page 373]{LP}.
\end{proof}
\begin{theorem}
Let $M$ be a closed Riemannian manifold with Levi-Civita connection $D$. Suppose $v$ that satisfies the differential inequality 
\[ (Lv)^2 \le c_1 v^2 + c_2 |D v|^2 \]
on $M \times [-\tau, 0]$ for some positive constants $c_1$ and $c_2$ where $L$ is a parabolic differential operator
\[ L = a^{ij}(x,t) D_i D_j - \frac{\pl}{\pl t}.\] If $v(\cdot, 0)=0$ then $v$ vanishes identically on $M \times [-\tau, 0]$.
\end{theorem}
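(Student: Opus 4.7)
The plan is to prove this backward-uniqueness statement by the logarithmic convexity method (Agmon--Nirenberg), which is the standard route to such parabolic unique-continuation results; an alternative is to use Carleman weights, but log-convexity is more self-contained. Let $N(t) = \int_M v(x,t)^2 \, dV$ for $t \in [-\tau, 0]$; the goal is to show that $N(0) = 0$ forces $N \equiv 0$. The pointwise inequality $(Lv)^2 \le c_1 v^2 + c_2 |\na v|^2$ says that $v$ is almost a solution of $Lv = 0$, and the error terms it generates will be absorbed as lower-order perturbations.

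First I would compute
\[ N'(t) = 2 \int_M v v_t, \qquad N''(t) = 2 \int_M v_t^2 + 2 \int_M v v_{tt}, \]
using $v_t = a^{ij} D_i D_j v - Lv$ and integration by parts in space. The Cauchy--Schwarz inequality yields $(N')^2 \le 4 N \int v_t^2$, and the core step is to upgrade this into a log-convexity estimate
\[ N N'' - (N')^2 \ge - C \lt( N^2 + N \int_M |\na v|^2 \, dV \rt) \]
valid on $[-\tau, 0]$ with $C$ depending on the $a^{ij}$, their derivatives, and $c_1, c_2$. The $Lv$ contributions will be controlled via the differential inequality and Young's inequality, while the ellipticity of $a^{ij}$ forces the good second-order terms to dominate.

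From this estimate, together with a standard parabolic energy bound for $\int |\na v|^2 \, dV$, one reads off $(\log N)''(t) \ge -C - \psi(t)$ with $\psi \in L^1(-\tau, 0)$. Suppose for contradiction $N \not\equiv 0$, and pick $t_0 \in [-\tau, 0)$ with $N(t_0) > 0$; almost-convexity of $\log N$ on $[t_0, 0]$ then yields a lower bound of the form
\[ \log N(t) \ge \frac{t - t_0}{-t_0} \log N(0) + \frac{-t}{-t_0} \log N(t_0) - C'(-t_0)^2. \]
As $t \to 0^-$ the left-hand side tends to $-\infty$ since $N(0) = 0$, whereas the right-hand side stays finite --- a contradiction. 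Hence $N \equiv 0$ on $[-\tau, 0]$ and $v$ vanishes identically.

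The hard part will be the estimate on $N''$. Computing $v_{tt}$ requires differentiating the PDE, so one must commute $\pl_t$ with $a^{ij} D_i D_j$, generating terms with $\pl_t a^{ij}$ that need to be absorbed using smoothness of the coefficients. Moreover, $Lv$ is only controlled pointwise, not after further differentiation, so every identity must be phrased at the level of integrated quantities and Young's inequality applied with carefully chosen weights so that all error terms match the log-convexity form above. Getting the signs and powers right in this step is the technical heart of the argument, and it is exactly where the hypothesis that $L$ is genuinely parabolic (uniform ellipticity of $a^{ij}$) enters.
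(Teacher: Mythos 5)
The paper does not actually prove this theorem: it is quoted verbatim from Lees--Protter \cite[Theorem~1, p.~373]{LP} and used as a black box to conclude unique continuation for the difference $v = u - u_0$. So there is no in-paper proof to compare against; the relevant question is whether your sketch is a correct route to the cited result.

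Your choice of method --- logarithmic convexity of $N(t) = \int_M v^2$ --- is the right one, and the structure (compute $N', N''$, prove almost-log-convexity, control the frequency $\int |\nabla v|^2 / N$) is exactly how such backward-uniqueness statements are established. However, the concluding step is wrong in two places, and as written the argument does not close. First, convexity of $\log N$ (or near-convexity $(\log N)'' \ge -K$) gives an \emph{upper} bound on $\log N(t)$ at intermediate times, not a lower bound: for $t = \lambda t_0 \in (t_0, 0)$,
\[ \log N(t) + \tfrac{K}{2}t^2 \;\le\; \lambda \bigl(\log N(t_0) + \tfrac{K}{2}t_0^2\bigr) + (1-\lambda)\log N(0). \]
You wrote the inequality with $\ge$, which is concavity, not convexity. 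Second, the claim that ``the right-hand side stays finite'' as $t \to 0^-$ is false: your right-hand side contains $\log N(0) = -\infty$ multiplied by a coefficient tending to $1$, so it also diverges to $-\infty$ and yields no contradiction. The correct conclusion is direct, not by contradiction: since $N(0) = 0$, the \emph{upper} bound above is $-\infty$ for every $\lambda < 1$, forcing $N(t) = 0$ for all $t \in (t_0, 0]$, and then continuity gives $N(t_0) = 0$; iterating over $t_0 \in [-\tau, 0)$ gives $N \equiv 0$. Your technical outline (Cauchy--Schwarz on $(N')^2$, commutator terms from $\partial_t a^{ij}$, absorbing the $Lv$-error via the hypothesis and Young's inequality, bounding the Dirichlet quotient) is sound modulo the usual work, but the endgame must be rewritten with the inequality in the correct direction.
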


\begin{proof}[Proof 3.] 
By M\"uller zum Hagen \cite{static_analytic}, static metrics are analytic. As a result, the metric $g$ is isometric to the Schwarzschild metric on $\R^n - B_{r_0}$.  
\begin{prop}\label{conformally flat}
The metric $g = u^2(r,\theta) dr^2 + r^2 g_{S^{n-1}}$ is conformally flat if and only if there exist two functions $a_0(r)$ and $a_i(r)$ such that $u = a_0(r) + \sum_{i=1}^n a_i(r) X^i$. \end{prop}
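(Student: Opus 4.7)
The plan is to prove both directions simultaneously by constructing an explicit conformal embedding $\Phi: (M,g) \to (\R^n, g_{\text{Euc}})$ and reading off the form of $u$ from it. The crucial algebraic input is the following identity: for any M\"obius transformation $Y: S^{n-1} \to S^{n-1}$ with conformal factor $\mu$ (so $Y^* g_{S^{n-1}} = \mu^2 g_{S^{n-1}}$), both $1/\mu$ and each component of $Y/\mu$, viewed as functions on $S^{n-1}$, lie in $\mathcal{H}_{\ell\le 1}$ — i.e.\ they are restrictions to $S^{n-1}$ of affine functions on $\R^n$. I will verify this by parameterizing $\mathrm{M\ddot{o}b}(S^{n-1}) \cong O(n,1)/\{\pm 1\}$ via stereographic projection: a direct computation for ``translations'' $w \mapsto w + t$ in $\R^{n-1}$ yields affine expressions for $1/\mu$ and $Y/\mu$ (in dimension $n=3$ this is an elementary calculation consistent with the basis of $\mathcal{H}_{\ell\le 1}$ described after Proposition \ref{udot_square_l=2}), and the general case reduces to this after composition with an orthogonal transformation of $\R^n$, which preserves $\mathcal{H}_{\ell\le 1}$.

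For the $(\Leftarrow)$ direction, given $u = a_0(r) + \sum_i a_i(r) X^i$, I construct $\Phi(r,\theta) = c(r) + R(r) Y_r(\theta)$, where $c: (r_0,\infty) \to \R^n$, $R(r) > 0$, and $Y_r$ is a one-parameter family of M\"obius transformations of $S^{n-1}$ with $Y_{r_0} = \mathrm{id}$. Demanding that $\pl_r \Phi$ be $g_{\text{Euc}}$-orthogonal to each $\{r = \mathrm{const}\}$ forces
\[
\pl_r Y_r = R^{-1}\bigl[(c' \cdot Y_r) Y_r - c'\bigr],
\]
and under this condition the pullback reads $\Phi^* g_{\text{Euc}} = (R' + c' \cdot Y_r)^2 \, dr^2 + R^2 \mu_r^2 \, g_{S^{n-1}}$. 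Rescaling by $\Omega = r/(R\mu_r)$ gives $\Omega^2 \Phi^* g_{\text{Euc}} = u^2 dr^2 + r^2 g_{S^{n-1}}$ precisely when $u(r,\theta) = (r/R\mu_r)(R' + c'\cdot Y_r)$. By the key identity the right-hand side lies in $\mathcal{H}_{\ell\le 1}$ at each $r$, and matching its $n+1$ coefficients to $a_0, a_1, \ldots, a_n$ produces an algebraic system determining $R'(r)$ and $c'(r)$ in terms of $Y_r$ and $\mu_r$, coupled with the $Y_r$-ODE above. Standard ODE theory gives a solution, and the resulting $\Phi$ is a conformal embedding, so $g$ is conformally flat.

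For the $(\Rightarrow)$ direction, conformal flatness of $g$ provides a local conformal embedding $\Phi: (M,g) \to (\R^n, g_{\text{Euc}})$. The quasi-spherical form shows that each $\Sigma_r = \{r = \mathrm{const}\}$ is totally umbilical in $(M,g)$ with mean curvature $(n-1)/(ru)$. Since umbilicity is conformally invariant, $\Phi(\Sigma_r)$ is umbilical in $\R^n$; as $\Sigma_r$ has finite intrinsic diameter $\pi r$, its image must be a round Euclidean sphere with some center $c(r)$ and radius $R(r)$. The restriction $\Phi|_{\Sigma_r}$ is then a conformal diffeomorphism between two round spheres, hence a M\"obius transformation $Y_r: S^{n-1} \to S^{n-1}$ by Liouville's theorem. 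Writing $\Phi(r,\theta) = c(r) + R(r) Y_r(\theta)$ and retracing the calculation from the $(\Leftarrow)$ direction yields the identity $u = (r/R\mu_r)(R' + c'\cdot Y_r)$, which by the key identity places $u$ in $\mathcal{H}_{\ell\le 1}$, i.e.\ $u = a_0(r) + a_i(r) X^i$.

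The main obstacle I anticipate is making the algebraic identity about $1/\mu$ and $Y/\mu$ rigorous uniformly in the dimension $n$, together with the bookkeeping required to solve the coupled ODE system in the $(\Leftarrow)$ direction while preserving the M\"obius property of $Y_r$ along $r$. A secondary point is justifying that the global solutions of the ODE system yield an immersion whose image realizes a foliation of an open subset of $\R^n$; this reduces to a transversality argument using $R(r) > 0$ and positivity of $u$.
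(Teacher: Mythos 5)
Your approach is genuinely different from the paper's, and conceptually appealing. For the $(\Rightarrow)$ direction, the paper does not use an explicit conformal embedding at all: it writes out the Cotton tensor of $g$ in the adapted frame, shows that the component $C_{\nu\nu b}$ equals $\nabla^a \bigl( u^{-1}(\nabla_a\nabla_b u - \frac{\Delta_\sigma u}{n-1}\sigma_{ab}) \bigr)$ on each level set, and then pairs with $\nabla^b u$ and integrates by parts to get $\int_S u^{-1}\lvert \nabla_a\nabla_b u - \frac{\Delta_\sigma u}{n-1}\sigma_{ab}\rvert^2 = 0$, hence $u|_S \in \mathcal{H}_{\ell\le 1}$ via Proposition \ref{characterizing Hle1}. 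Your argument instead reads off the answer geometrically: umbilicity is conformally invariant, umbilical compact connected hypersurfaces of $\R^n$ are round spheres, so a conformal immersion sends the foliation $\{\Sigma_r\}$ to a one-parameter family of spheres, the restriction $\Phi|_{\Sigma_r}$ is a conformal self-map of $S^{n-1}$ and hence M\"obius, and the orthogonality of $\partial_r\Phi$ pins down the form of the conformal factor. The algebraic input you call the ``key identity'' --- that $1/\mu$ and each $Y^i/\mu$ lie in $\mathcal{H}_{\ell\le 1}$ for a M\"obius map $Y$ with conformal factor $\mu$ --- is correct, and is cleanest to see in the $O(n,1)$ light-cone model: $(1/\mu,\, Y/\mu)$ is $L(1,\theta)$ for the Lorentz transformation $L$ inducing $Y$, hence affine in $\theta$ and (since $L$ is invertible) a basis of $\mathcal{H}_{\ell\le 1}$, which is also exactly what you need to make your linear system in $R',c'$ invertible. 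Two small corrections for $(\Rightarrow)$: first, conformal flatness only gives a local conformal chart, so you should invoke simple connectedness of $\mathbb{S}^{n-1}\times(r_0,\infty)$ (true for $n\ge 3$) to globalize to a developing map, and then argue with conformal \emph{immersions} rather than embeddings; second, for $n=3$ the level sets are $2$-spheres and the fact that conformal diffeomorphisms of $S^2$ are M\"obius is not Liouville's theorem (which requires dimension $\ge 3$) but a separate classical fact about the conformal group of the Riemann sphere.

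Where the two approaches diverge most is $(\Leftarrow)$. The paper's $(\Leftarrow)$ is a one-line observation --- rewrite $g = u^2\bigl(dr^2 + r^2 u^{-2}g_{S^{n-1}}\bigr)$ and note that $u^{-2}g_{S^{n-1}}$ is a round metric of curvature $1$ when $u$ is affine --- whereas you propose to solve a coupled nonlinear ODE system for $(c(r),R(r),Y_r)$ and verify that the resulting $\Phi$ is a conformal immersion. That is genuinely more work and is the weakest part of your sketch: you need to check that the evolution $\partial_r Y_r = R^{-1}\bigl[(c'\cdot Y_r)Y_r - c'\bigr]$ preserves the M\"obius property (it does, since the right-hand side is the pullback of the tangential projection of the constant field $-c'/R$, which is the gradient of a first-order spherical harmonic and hence a conformal Killing field), that the $(n{+}1)\times(n{+}1)$ linear system determining $(R',c')$ from $(a_0,a_i)$ at each $r$ is invertible (it is, because $1/\mu_r, Y^1_r/\mu_r,\dots,Y^n_r/\mu_r$ are linearly independent), and that the resulting ODE solution persists as long as needed with $R>0$; since conformal flatness is a local property, short-time existence suffices, but this should be stated explicitly. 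If you carry this through, your construction actually produces an explicit local conformal chart, which is more information than the paper's proof gives, at the price of substantially more bookkeeping. One can also bypass the construction entirely in the $(\Leftarrow)$ direction by running the paper's Cotton (or Weyl) computation forward: when $u$ is affine, $\nabla_a\nabla_b u - \frac{\Delta_\sigma u}{n-1}\sigma_{ab} = 0$ kills the relevant obstruction components, which is a more economical way to close the equivalence than your ODE argument.
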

\begin{proof}
    Suppose $u = a_0(r) + a_i(r) X^i.$ Then 
\begin{align*}
g = (a_0 + a_i X^i)^2 \lt( dr^2 + r^2 \frac{g_{S^{n-1}}}{(a_0 + a_i X^i)^2} \rt).
\end{align*}
It is well-known that the metric $\frac{g_{S^{n-1}}}{(a_0 + a_i X^i)^2}$ has constant sectional curvature 1 and is isometric to $g_{S^{n-1}}$. This proves that $g$ is conformally flat.

Suppose $g$ is conformally flat. Then the Cotton tensor of $g$
\begin{align*}
C_{ijk} = D_k R_{ij} - D_j R_{ik} + \frac{1}{2(n-1)} \lt( D_j R g_{ik} - D_k R g_{ij} \rt)
\end{align*}
vanishes, see Corollary 7.31 and Proposition 7.32 of \cite{Lee} for example. 

Fix a level set $S$ of $r$. We will show that $u|_S \in \mathcal{H}_{\ell\le 1}$. Let $\sigma$ and $\na$ denote the induced metric and its Levi-Civita connection of $S$. Moreover let $\nu = u^{-1} \frac{\pl}{\pl r}$ and $H$ denote the the unit normal vector and the mean curvature of $S$. Note that $S$ is umbilical with second fundamental form $\frac{H}{n-1} \sigma$ and hence $D_X \nu = \frac{H}{n-1} X$ for any vector tangent to $S$. Let $R^\sigma = (n-1)(n-2) r^{-2}$ denote the constant scalar curvature of $\sigma$ on $S$. The Gauss and Codazzi equations of $S$ read
\begin{align*}
R^\sigma &= R - 2 R_{\nu\nu} + \frac{n-2}{n-1}H^2\\
R_{b\nu} &= - \frac{n-2}{n-1} \pl_b H.
\end{align*}

Let $\mathcal{R}_{ab}$ be the symmetric 2-tensor on $S$ obtained by restricting the Ricci tensor of $g$ to $S$. We compute
\begin{align*}
D_b R_{\nu\nu} &= \pl_b R_{\nu\nu} - 2 \frac{H}{n-1} R_{b\nu} = \pl_b \lt( \frac{R}{2} + \frac{n-2}{2(n-1)} H^2 \rt) + 2 \frac{n-2}{(n-1)^2} H \pl_b H \\
D_\nu R_{\nu b} &= \frac{1}{2} \pl_b R - \sigma^{ac} D_a R_{cb} = \frac{1}{2} \pl_b R - \sigma^{ac}\lt( \na_a \mathcal{R}_{cb} + \frac{H}{n-1} R_{\nu b} \sigma_{ac} + \frac{H}{n-1} R_{\nu c} \sigma_{bc} \rt) \\
&= \frac{1}{2} \pl_b R - \na^a \mathcal{R}_{ab} + \frac{n (n-2)}{(n-1)^2} H \pl_b H. 
\end{align*}
Let $\tr_\sigma \mathcal{R} = \sigma^{ab} \mathcal{R}_{ab}$ be the trace of $\mathcal{R}_{ab}$ on $S$. We have $\tr_\sigma \mathcal{R} = R - R_{\nu\nu} = \frac{R}{2} + \frac{R^\sigma}{2} - \frac{n-2}{2(n-1)}H^2$ and
\begin{align*}
C_{\nu\nu b} = \na^a \mathcal{R}_{ab} + \frac{n-2}{(n-1)^2} H \pl_b H - \frac{1}{2(n-1)} \pl_b R = \na^a \lt( \mathcal{R}_{ab} - \frac{1}{n-1} \tr_\sigma \mathcal{R} \sigma_{ab} \rt).
\end{align*}
From the formula of Ricci curvature quoted in the proof of Proposition \ref{derivation of static equations}, the traceless symmetric 2-tensor is related to $u$ by
\begin{align*}
\mathcal{R}_{ab} - \frac{1}{n-1} \tr_\sigma \mathcal{R} \sigma_{ab} = u^{-1} \lt( \na_a\na_b u - \frac{\Delta_\sigma u}{n-1} \sigma_{ab} \rt).
\end{align*}
By the vanishing of Cotton tensor, we get
\begin{align*}
0 = \int_S \na^b u \na^a \lt( u^{-1} (\na_a\na_b u - \frac{\Delta_\sigma u}{n-1}\sigma_{ab}) \rt) = - \int_S u^{-1} \lt| \na_a\na_b u - \frac{\Delta_\sigma u}{n-1} \sigma_{ab} \rt|^2
\end{align*}
and hence $u|_S \in \mathcal{H}_{\ell\le 1}$ by Proposition \ref{characterizing Hle1}.   
\end{proof}
Since Schwarzschild metric is conformally flat, we apply the previous proposition to get $u = a_0(r) + \sum_{i=1}^n a_i(r) X^i$ and the zero scalar curvature equation becomes
\begin{align*}
(a_0 + a_i X^i)^2 (-n a_j X^j) - (n-1) r (a_0' + a_i' X^i) - \frac{(n-1)(n-2)}{2} \lt[ a_0 + a_i X^i - (a_0 + a_i X^i)^3 \rt]=0.
\end{align*}
Projecting to $\ell=2$ modes yields
\begin{align*}
a_0 \lt( -2n - \frac{3}{2}(n-1)(n-2) \rt) (a_i X^i a_j X^j)_{\ell=2} =0.
\end{align*}
Hence $a_i =0$ for $i=1, \cdots, n$ by Proposition \ref{udot_square_l=2} and $u$ is a function of $r$.
\end{proof}
\subsection{Proof of Theorem \ref{equality case}}
Suppose the equality holds for $\Sigma$. Let $\Sigma_t$ be the weak solution of IMCF starting from $\Sigma$. Corollary \ref{cor} implies
\begin{align*}
g = u^2 (r,\theta) dr^2 + r^2 g_{S^{n-1}}
\end{align*}
outside $\Sigma_{T_{0}}$ in the case of equality in \eqref{Minkowski_static}, where $T_0$ is the time IMCF becomes smooth in Theorem \ref{imcf_smoothing}. By Theorem \ref{main}, $M^{n} \setminus \overline{\Omega_{T_{0}}}$ is nothing but a Schwarzschild space \eqref{schwarzschild} with each $\Sigma_{t}, t \in [T_0,\infty),$ being a slice $\{ r=r(t)\}$. Since static metrics are analytic \cite{static_analytic}, we conclude the whole $(M^{n},g)$ is Schwarzschild space. 

For the rest of the proof, assume the nontrivial case $m \neq 0$. We know
\begin{enumerate}
\item the equality holds for each $\Sigma_t$
\item each $\Sigma_t$ is weakly umbilic
\item each $\Sigma_t$ is $C^{1,1}$ away from a closed singular set of dimension at most $n-8$ by Heidusch's regularity theorem, see the footnote on page 369 of \cite{HI}.
\end{enumerate}

Since Schwarzschild is conformally flat and umbilicity is invariant under conformal change, $\Sigma_t$ is a $C^{1,1}$ hypersurface in $\R^n$ with a small singular set and umbilic on its regular part, which is path-connected by Lemma \ref{path-connected}. By Lemma 3.2 of \cite{DRKS20}, the regular part of $\Sigma_t$ lies on a sphere\footnote{Without knowing the regular part is path-connected, the best we can say is that $\Sigma_t$ consists of pieces of spheres and planes joined along the singular set, although this is very unlikely given the Hausdorff dimension of the singular set is small.} and hence the whole $\Sigma_t$ is a sphere.

We note that all $\Sigma_t$ must enclose the horizon if $m > 0$ or the curvature singularity if $m<0$. Otherwise when some $\Sigma_t$ jumps into a sphere that encloses both the horizon and $\Sigma_t$, the area jumps discontinuously and violates $|\Sigma_t| = e^t |\Sigma|$.

If $m < 0$, all spheres enclosing the curvature singularity are strictly mean-convex. Hence $\Sigma_t, t \ge 0$ is a smooth solution to IMCF and all of $\Sigma_t$ are slices $\{ r=r(t) \}$ by Corollary \ref{cor} and Theorem \ref{main}.

It remains to handle $m > 0$ case. By Lemma \ref{property of spheres}, all $\Sigma_t$ are strictly outer-minimizing. From the construction of weak solution to IMCF, there is no jump. Let \[ \mathcal{T} = \{ t_1 \in [0,\infty): \Sigma_t \mbox{ is a slice $\{ r=r(t)\}$ in Schwarzschild for } [t_1,\infty) \}. \]
We know $\mathcal{T} \neq \emptyset$ and $\mathcal{T}$ is closed because there is no jump. Let $t_1 \in \mathcal{T}$. Since there is no jump, there exists a small $\delta > 0$ such that $\Sigma_t, t_1 -\delta < t < t_1$ is strictly mean-convex. Hence the weak solution on $[t_1 - \delta, t_1]$ is actually smooth. Corollary \ref{cor} and Theorem \ref{main} then imply each $\Sigma_t, t\in [t_1-\delta,\infty),$ is a slice. This concludes $\mathcal{T}$ is open and in particular $\Sigma = \Sigma_0$ is a slice $\{ r = r_0 \}$.

\section{Applications}\label{section:applications}
Our approach to the uniqueness theorems for black holes, equipotential photon surfaces, and static metric extensions involves an additional Minkowski inequality which arises from the conformal symmetries of the static vacuum equations. We originally proved this inequality in Section 4 of \cite{HW}, and because it comes from the original Minkowski inequality \eqref{Minkowski_static} it also extends to the larger class of static manifolds considered here, provided the boundary is connected.
\begin{prop} \label{conformal_theorem}
Let $(M^{n},g,V)$ be asymptotically flat  with connected boundary $\partial M = \Sigma$ satisfying $V|_{\Sigma}>0$. Suppose that $\Sigma$ is outer-minimizing in $(M^{n},g_{-})$, where $g_{-}$ the conformally-modified metric

\begin{equation} \label{g_conf}
    g_{-}= V^{\frac{4}{n-2}} g.
\end{equation}
Then

\begin{equation} \label{conformal_ineq}
    \frac{1}{(n-1)w_{n-1}} \int_{\Sigma} VH d\sigma \geq \left( \frac{1}{w_{n-1}} \int_{\Sigma} V^{2 \frac{n-1}{n-2}} d\sigma \right)^{\frac{n-2}{n-1}}.
\end{equation}
\end{prop}

\begin{proof}
The metric $g_{-}$ and associated potential function

\begin{equation*}
    V_{-} = V^{-1} \in C^{\infty}_{+}(M^{n})
\end{equation*}
solve the static equations \eqref{static} on $M^{n}$; we refer the reader to the proof of Proposition 4.1 in \cite{HW} for the calculation. Given $V=1 + o_{2}(1)$ and $g_{ij}=\delta_{ij}+o_{2}(1)$, it immediately follows that

\begin{eqnarray*}
    (g_{-})_{ij}(x) &=& \delta_{ij} + o_{2}(1), \\
    V_{-}(x) &=& 1 + o_{2}(1).
\end{eqnarray*}
Therefore, if $\Sigma$ is outer-minimizing in $(M^{n},g_{-})$, we apply \eqref{Minkowski_static} to $\Sigma$. Also in the proof of \cite[Proposition 4.1]{HW}, we compute the left-hand and right-hand sides of \eqref{Minkowski_static} to respectively be
\begin{eqnarray*}
    \frac{1}{(n-1)w_{n-1}} \int_{\Sigma} V_{-} H_{-} d\sigma_{-} + 2m_{-} &=& \frac{1}{(n-1)w_{n-1}} \int_{\Sigma} VHd\sigma, \\
    \left( \frac{|\Sigma|_{g_{-}}}{w_{n-1}} \right)^{\frac{n-2}{n-1}} &=& \left( \frac{1}{w_{n-1}} \int_{\Sigma} V^{2 \frac{n-1}{n-2}} d\sigma \right)^{\frac{n-2}{n-1}}.
\end{eqnarray*}
\end{proof}
Proposition \ref{conformal_theorem} implies a Minkowski inequality for the level sets of the potential function, provided they are outer-minimizing in $(M^{n},g_{-})$.
\begin{cor}[Minkowski Inequality for Equipotential Boundaries] \label{equi_minkowski}
Let $(M^{n},g,V)$ be asymptotically flat  with connected boundary $\partial M= \Sigma$.  Suppose that $V|_{\Sigma} = V_{0} > 0$ is constant and that $\Sigma$ is outer-minimizing with respect to the conformal metric $g_{-}$ from \eqref{g_conf}. Then we have

\begin{equation} \label{equi}
    V_{0} \leq \frac{1}{(n-1)\omega_{n-1}} \left( \frac{|\Sigma|}{w_{n-1}} \right)^{\frac{2-n}{n-1}} \int_{\Sigma} H d\sigma.
\end{equation}
\end{cor}

\subsection{Photon surface and static metric extension uniqueness}
Given inequality \eqref{equi}, both Theorems \ref{photon_surface} and \ref{static_extension} can be reduced to the following rigidity result for Schwarzschild.
\begin{theorem} \label{general_rig}
Let $(M^{n},g,V)$ be asymptotically flat with connected boundary $\partial M = \Sigma$. Suppose that the following hold on $\Sigma$:

\begin{eqnarray}
    V|_{\Sigma} &=& V_{0} > 0, \label{V_const} \\
    H_{\Sigma} &=& H_{0} > 0, \label{H_const} \\
    h_{\Sigma} &=& \frac{H_{0}}{n-1} \sigma, \label{pure_trace}
\end{eqnarray}
where $H_{0}$, $V_{0}$ are constants, and $\sigma$, $h_{\Sigma}$ are the induced metric and second fundamental form on $\Sigma^{n-1}$. Assume also that

\begin{itemize}
\item $\Sigma$ is outer-minimizing in $(M^{n},g)$ if $m \geq 0$,
\item $\Sigma$ is outer-minimzing in $(M^{n},g_{-})$ if $m<0$.
\end{itemize}
Then 
\begin{equation*}
\left( \frac{|\Sigma|}{w_{n-1}} \right)^{\frac{n-2}{n-1}} \leq \frac{1}{(n-1)w_{n-1}} \int_{\Sigma} V_{0}H_{0} d\sigma + 2m \leq \frac{E(\Sigma,\sigma)}{E(\mathbb{S}^{n-1},g_{S^{n-1}})} \left( \frac{|\Sigma|}{w_{n-1}} \right)^{\frac{n-2}{n-1}}.
\end{equation*}
where $E(\Sigma,\sigma)$ is the normalized Einstein-Hilbert energy \eqref{einstein_hilbert} of $(\Sigma^{n-1},\sigma)$. In particular, $E(\Sigma,\sigma) \geq E(\mathbb{S}^{n-1},g_{S^{n-1}})$ with equality if and only if $(M^{n},g,V) \cong (\mathbb{S}^{n-1} \times (r_{0},\infty), g_{m}, V_{m})$.
\end{theorem}
\begin{proof}
\textit{Case I, $m >0$}: The lower bound immediately follows from the outer-minimizing property. To obtain the upper bound, we claim that $\Sigma \subset (M^{n},g_{-})$ is outer-minimizing. Observe that $m>0$ implies $V_{0} < 1$. Otherwise, we could choose a point $x \in \Sigma$ with $\frac{\partial V}{\partial \nu}(x) > 0$, which gives a $y \in M^{n}$ with

\begin{equation*}
    V(y) > \max\{ V_{0}, 1 \},
\end{equation*}
contradicting the elliptic maximum principle. Also by the elliptic maximum principle, we have that $\inf_{M^{n}} V = \min \{ V_{0}, 1 \}$, and so $V \geq V_{0}$ in $M^{n}$.

Let $\widetilde{\Sigma}^{n-1} \subset M^{n}$ be a hypersurface enclosing $\Sigma^{n-1}$. Given $|\widetilde{\Sigma}|_{g} \geq |\Sigma|_{g}$, the above observation implies that $|\widetilde{\Sigma}|_{\widehat{g}} > |\Sigma|_{\widehat{g}}$, and so $\Sigma$ is outer-minimizing under the conformal change. Thus inequality \eqref{equi} holds on $\Sigma$.

\eqref{V_const}-\eqref{pure_trace} imply $\frac{\partial V}{\partial \nu} = \kappa$ for a nonzero constant $\kappa$ via the Gauss and Codazzi equations, see for example Section 4 of \cite{CG}. Putting together the Gauss and static equations, we get

\begin{equation*}
    H_{0} \kappa = -Hess (V)_{\nu \nu} = -V_{0} \text{Ric}_{\nu \nu} = \frac{V_{0}}{2} \left( R_{\sigma} - \frac{n-2}{n-1} H_{0}^{2} \right).
\end{equation*}
Using the definition of $m$, we derive the upper bound

\begin{eqnarray*}
    m &=& \frac{1}{(n-2)w_{n-1}} \int_{\Sigma} \kappa d\sigma = -\frac{1}{2(n-1)w_{n-1}} \int_{\Sigma} V_{0} H_{0} d\sigma + \frac{1}{2(n-2)w_{n-1}H_{0}} \int_{\Sigma} V_{0} R_{\sigma} d\sigma \\
    &\leq&  -\frac{1}{2(n-1)w_{n-1}} \int_{\Sigma} V_{0} H_{0} d\sigma + \frac{1}{2(n-1)(n-2)w_{n-1}} \left( \frac{|\Sigma|}{w_{n-1}} \right)^{\frac{1}{n-1}} \int_{\Sigma} R_{\sigma} d\sigma \\
    &=& -\frac{1}{2(n-1)w_{n-1}} \int_{\Sigma} V_{0} H_{0} d\sigma + \frac{1}{2} \frac{E(\Sigma,\sigma)}{E(\mathbb{S}^{n-1},g_{S^{n-1}})} \left( \frac{|\Sigma|}{w_{n-1}} \right)^{\frac{n-2}{n-1}}.
\end{eqnarray*}
Here, the inequality follows from the equipotential Minkowski inequality \eqref{equi}, which gives a lower bound on $H_{0}$ here.

\textit{Case 2, $m=0$}: We claim that $V \equiv 1$ on $(M^{n},g)$. If not, then for almost every $s \in \text{Im}(V) \subset \mathbb{R}^{+}$ we have $\int_{\{V=s\}} \frac{\partial V}{\partial \nu} d\sigma \neq 0$ by Sard's theorem. On the other hand, for any $s$ sufficiently close to $1$, we have that $\int_{\{V=s\}} \frac{\partial V}{\partial \nu} d\sigma= (n-2)w_{n-1} m =0$ by the divergence theorem. This is a contradiction, and so $V \equiv 1$ on $M^{n}$ and hence $(M^{n},g)$ is Ricci flat. The Minkowski inequality then reduces to

\begin{equation*}
\frac{1}{(n-1)w_{n-1}}\int_{\Sigma} Hd\sigma \geq \left( \frac{|\Sigma|}{w_{n-1}} \right)^{\frac{n-2}{n-1}}.
\end{equation*}
For a CMC boundary, this implies that $H_{0} \geq (n-1) \left( \frac{|\Sigma|}{w_{n-1}} \right)^{\frac{1}{n-1}}$. Furthermore, if $\Sigma$ is total umbilical the Gauss-Codazzi equations then imply

\begin{equation*}
H_{0}= \frac{n-1}{(n-2)H_{0}} R_{\sigma} \leq \frac{1}{n-2} \left( \frac{|\Sigma|}{w_{n-1}} \right)^{\frac{-1}{n-1}} R_{\sigma}.
\end{equation*}
From this, we recover the upper bound on $\int_{\Sigma} H d\sigma$.

\textit{Case 3, $m <0$}: Observe that the mass $(M^{n},g_{-})$ is $m_{-}=-m > 0$. Therefore by the same argument as in Case I, $\Sigma$ is also outer-minimizing in $(M^{n},g)$ given that it is outer-minimizing in $(M^{n},g_{-})$. Therefore, we may apply both inequalities \eqref{Minkowski_static} and \eqref{equi} to $\Sigma$ like in Case I.
\end{proof}

\begin{proof}[Proof of Theorems \ref{photon_surface} and \ref{static_extension}]
The time slices of an equipotential photon surface automatically obey \eqref{V_const}-\eqref{pure_trace}-- once again see Section 4 of \cite{CG} for the calculation-- and so we recover Theorem \ref{photon_surface} from Theorem \ref{general_rig}. For the case of Schwarzschild Bartnik boundary data, we have \eqref{H_const} and $E(\Sigma,\sigma)=E(\mathbb{S}^{n-1},g_{S^{n-1}})$ automatically. \eqref{pure_trace} follows as a consequence of Schwarzschild stability, and if $H_{0} < (n-1) r_{0}$ then one may also obtain \eqref{V_const}-- see Section 5 of \cite{HW}. If $H_{0}=(n-1)r_{0}$, then we also show in Section 5 of \cite{HW} that $m=0$, and hence $V \equiv 1$.
\end{proof}

\subsection{Black hole uniqueness}
To begin, we recall some facts about static horizons. The first two are well-known, while the third follows from \cite{BM23}, Proposition 5.1.

\begin{prop} \label{horizon_boundary}
Let $(M^{n},g,V)$ be asymptotically flat with connected boundary $\partial M= \Sigma$ satisfying $V|_{\Sigma} = 0$. Then

\begin{enumerate} [label=\roman*.]
     \item $\frac{\partial V}{\partial \nu} = \kappa$ on $\Sigma$ for a constant $\kappa > 0$.
    \item $\Sigma^{n-1} \subset (M^{n},g)$ is totally geodesic.
    \item $\Sigma^{n-1}$ is outer-minimizing in $(M^{n},g)$.
\end{enumerate}
\end{prop}
\begin{proof}
By the first static equation, we have that $\text{Hess} \left(V \right) (x) = 0$ for $x \in \Sigma$. Denoting the unit normal on $\Sigma$ by $\nu$, we have in local coordinates $(y_{1},\dots,y_{n-1})$ of $\Sigma^{n-1}= \{ V =  0 \}$ that

\begin{equation*}
    0 = \text{Hess} \left( V \right)_{a\nu} = \partial_{a} \left( \frac{\partial V}{\partial \nu} \right) - h_{a}^{b} \partial_{b} V = \partial_{a} \left( \frac{\partial V}{\partial \nu} \right).
\end{equation*}
Thus $\frac{\partial V}{\partial \nu}$ is constant on $\Sigma$. The positivity of this constant follows from the Hopf Lemma. In directions tangent to $\Sigma$, the vanishing of the Hessian yields

\begin{equation*}
    0= \text{Hess}_{\Sigma}\left(V \right)_{ab} + \kappa h_{ab} = \kappa h_{ab}.
\end{equation*}
For item (iii), we invoke Proposition 5.1 in \cite{BM23}, which states that a static horizon is outer-minimizing hypersurface $S$ for any $n$ provided that it is homologous to an outer-minimizing hypersurface. We may choose $S$ to be a large coordinate sphere in an asymptotically flat $(M^{n},g)$. 
\end{proof}
%We denote $E_{s} = \{ V < s\} \subset M^{n}$, and we define the number
%\begin{equation} \label{beta}
 %   \beta = \sup\{ s \in (0,1) | \hspace{0.2cm} 2 \frac{(n-1)}{(n-2)} |\nabla \ln (V) | > \max\left \{0, - \text{div}_{g} \left( \frac{\nabla_{g} V}{|\nabla_{g} V|}\right) \right \} \hspace{0.3cm} \text{ on} \hspace{0.3cm} E_{s} \},
%\end{equation}
%which is well-defined by Properties $(i)-(ii)$ of Proposition \ref{horizon_boundary}. If $\partial E_{s}$ is outer-minimizing for $s \in (0,\beta)$, this creates a barrier which allows us to apply the equipotential Minkowski inequality to the near-horizon level sets.

The idea for the proof of Theorem \ref{black_hole_uniqueness} is to consider the level set flow of the static potential beginning from the horizon and apply the equipotential Minkowski inequality \eqref{equi} to the near-horizon level sets. The cumbersome technical aspect of this argument is establishing that the near-horizon level sets are outer-minimizing in $(M^{n},g_{-})$. We can verify this by considering an additional conformal metric

\begin{equation} \label{g_conf2}
\widetilde{g} = \left( \frac{V+1}{2} \right)^{\frac{4}{n-2}} g.
\end{equation}
Bunting and Masood ul-Alam \cite{BMA} used this metric in their $3$-dimensional static black hole uniqueness proof, and it also been used in the study of static manifolds by Raulot \cite{Rau} and Cederbaum-Galloway \cite{photon_sphere_positive_mass}, probably among others. Unlike in those works, we do \text{not} use this metric for a gluing argument-- rather, we will argue that the level set $\{ V =s \} \subset (M^{n},\widetilde{g})$ is outer-minimizing for sufficiently small $s$-- the argument also uses barriers formed by the weak IMCF in $(M^{n},\widetilde{g})$. In turn, this guarantees the outer-minimizing property in $(M^{n},g_{-})$ as well (note that for Corollary \ref{equi_minkowski}, we do not need outer-minimizing W.R.T. the original metric $g$).
\begin{lemma}
Let $(M^{n},g,V)$ be asymptotically flat with and connected boundary $\partial M=\Sigma$ satisfying $V|_{\Sigma} \equiv 0$. Then there is an $s_{0} > 0$ such that the level sets $\Sigma_{s}= \{ V = s\}$ are outer-minimizing hypersurfaces in $(M^{n},g_{-})$ for each $s \in (0,s_{0})$, where $g_{-}$ is the metric defined in \eqref{g_conf}. As a consequence, the equipotential Minkowski inequality \eqref{equi} holds on $\Sigma_{s} \subset (M^{n},g)$ for each $s \in (0,s_{0})$.
\end{lemma}
\begin{proof}
If $V|_{\Sigma} \equiv 0$, then the mean curvature $\widetilde{H}$ of $\Sigma$ in $(M^{n},\widetilde{g})$ for $\widetilde{g}$ in \eqref{g_conf2} is easily computed to be positive-- see for example equation (4.2) in \cite{Rau}. Therefore, we first choose an $s_{1} >0$ sufficiently small so that $\Sigma_{s}=\{ V=s \}$ is regular and $\widetilde{H}_{\Sigma_{s}} >0$ for each $s \in (0,s_{1})$. 

It is also immediately clear by the choice of conformal factor that $\Sigma$ is strictly outer-minimizing in $(M^{n},\widetilde{g})$. Let $u: (M^{n},\widetilde{g}) \rightarrow \mathbb{R}$ be the proper weak IMCF in $(M^{n},\widetilde{g})$ with initial condition $\Sigma$. Since $\Sigma$ is strictly outer-minimizing, we have that $u > 0$ in $M^{n}$. It follows in particular that
\begin{equation}
\{ u \leq \epsilon \} \subset \{ V < s_{1}\} \subset M^{n}
\end{equation}
holds for some $\epsilon>0$. We choose an $s_{0} < s_{1}$ so that

\begin{equation}
\{ V < s_{0} \} \subset \{ u \leq \epsilon \}.
\end{equation}
For $s \in (0,s_{0})$, let $\Sigma'_{s}$ be the strictly minimizing hull of $\Sigma_{s} \subset (M^{n},\widetilde{g})$, and call the domains that these surfaces bound $E'$ and $E$, respectively. Since $\partial \{ u \leq \epsilon \} \subset (M^{n},\widetilde{g})$ is strictly outer-minimizing (see the appendix) and $E \subset \{ u \leq \epsilon \}$, it follows that $E' \subset \{ u \leq \epsilon \}$ and hence $\Sigma'_{s} \subset \{ V < s_{1} \}$. Now we argue that $E'=E$. For any smooth domain $F \subset \{ V < s_{1} \}$ containing $E$, the divergence theorem in $(M^{n},\widetilde{g})$ and the foliation by mean-convex $\Sigma_{s}$ imply
\begin{eqnarray*}
   0 &<& \int_{F \setminus E} \widetilde{H}_{\Sigma_{s}} d\Omega = \int_{F \setminus E} \widetilde{\text{div}} \left( \frac{\widetilde{\nabla} V}{|\widetilde{\nabla} V|} \right) d\Omega = \int_{\partial F}   \widetilde{g} \left( \frac{\widetilde{\nabla} V}{|\widetilde{\nabla} V|}, \widetilde{\nu} \right) d\sigma - \int_{\Sigma_{s}} \widetilde{g} \left( \frac{\widetilde{\nabla} V}{|\widetilde{\nabla} V|}, \frac{\widetilde{\nabla} V}{|\widetilde{\nabla} V|} \right) d\sigma \\
   & \leq & |\partial F|_{\widetilde{g}} - |\Sigma_{s}|_{\widetilde{g}}.
\end{eqnarray*}
This means that $E'=E$ and hence $\Sigma_{s}=\{ V = s\}$ is strictly outer-minimizing in $(M^{n},\widetilde{g})$ for $s \in (0,s_{0})$. Finally, the metrics $\widetilde{g}$ and $g_{-}$ are related by the conformal transformation

\begin{equation*}
g_{-} = \left( \frac{2V}{V+1} \right)^{\frac{4}{n-2}} \widetilde{g}.
\end{equation*}
We once again have that $V \geq s$ outside $\Sigma_{s}$ by the maximum principle, and hence $\frac{V}{V+1} \geq \frac{s}{s+1}$ on this exterior. Therefore,
\begin{equation*}
|\Sigma'|_{g_{-}} \geq \left( \frac{2s}{s+1} \right)^{\frac{4}{n-2}} |\Sigma'|_{\widetilde{g}} \geq \left( \frac{2s}{s+1} \right)^{\frac{4}{n-2}} |\Sigma_{s}|_{\widetilde{g}}= |\Sigma_{s}|_{g_{-}}
\end{equation*}
for any hypersurface $\Sigma'$ enclosing $\Sigma_{s}$.

\end{proof}
\begin{proof}[Proof of Theorem \ref{black_hole_uniqueness}]
We consider the level set flow $F: \Sigma^{n-1} \times (0, s_{0}) \rightarrow M$ of $V$ beginning from $\Sigma_{0} = \{ V=0 \}$ and with $s_{0}$ as in the previous lemma. Let us define the function $\gamma: (0,s_{0}) \rightarrow \mathbb{R}$ by

\begin{equation}
    \gamma(s)= - s + \frac{1}{(n-1)\omega_{n-1}} \left( \frac{|\Sigma_{s}|}{w_{n-1}} \right)^{\frac{2-n}{n-1}} \int_{\Sigma_{s} } H d \sigma,
\end{equation}
where $\Sigma_{s}= \{ V=s \}$. Applying the previous lemma, we know $\gamma(s) \geq 0$ for each $s \in (0,s_{0})$. Meanwhile, $\gamma(0)=0$ by property (ii) in Proposition \ref{horizon_boundary}. Therefore, we compute

\begin{eqnarray} \label{horizon_deriv}
    0 \leq \frac{d}{ds}|_{s =0 } \gamma(s) 
   &=& -1 + \frac{1}{(n-1)\omega_{n-1}} \left(\frac{|\Sigma|}{w_{n-1}} \right)^{\frac{2-n}{n-1}} \int_{\Sigma} \frac{\partial}{\partial s}\rvert_{s=0} H d\sigma. \nonumber
\end{eqnarray}
On the other hand, first variation of mean curvature for the variation field $\frac{\nabla V}{|\nabla V|^{2}}$ is
\begin{eqnarray*}
    \frac{\partial}{\partial s}\vert_{s =0} H &=& - \Delta_{\Sigma} |\nabla V|^{-1} - (|h|^{2} + \text{Ric}(\nu,\nu)) |\nabla V|^{-1} \\
    &=& -\text{Ric}(\nu,\nu) \kappa^{-1}. \nonumber
\end{eqnarray*}
Here we have used the properties $h=0$ and $\frac{\partial V}{\partial \nu} = \kappa >0$ from Proposition \ref{horizon_boundary} again. Given that $R =0$ and $h_{\Sigma}=0$, the Gauss equation reduces to $-2\text{Ric}(\nu,\nu)= R_{\sigma} $ on $\Sigma$, and so
\begin{equation*}
    \frac{\partial}{\partial s} \vert_{s=0} H= \frac{1}{2} R_{\sigma} \kappa^{-1} \hspace{1cm} \text{on} \hspace{0.5cm} \Sigma^{n-1}.
\end{equation*}
Substituting into \eqref{horizon_deriv} then yields
\begin{eqnarray*}
    \kappa &\leq&  \frac{1}{2(n-1)\omega_{n-1}} \left(\frac{|\Sigma|}{w_{n-1}} \right)^{\frac{2-n}{n-1}} \int_{\Sigma} R_{\sigma} d\sigma,
\end{eqnarray*}
and so

\begin{eqnarray*}
    m &=& \frac{1}{(n-2) w_{n-1}} \int_{\Sigma } \frac{\partial V}{\partial \nu} d\sigma \\
    &\leq& \frac{1}{2(n-1)(n-2)w_{n-1}} \left(\frac{|\Sigma|}{w_{n-1}} \right)^{\frac{1}{n-1}} \int_{\Sigma} R_{\sigma} d\sigma = \frac{1}{2} \frac{E(\Sigma,\sigma)}{E(\mathbb{S}^{n-1},g_{S^{n-1}})} \left( \frac{|\Sigma|}{w_{n-1}} \right)^{\frac{n-2}{n-1}}.
\end{eqnarray*}
This gives the upper bound and the associated uniqueness statement in Theorem \ref{black_hole_uniqueness}.
\end{proof}
\appendix
\section{Proof of Theorem 1.1}
 IMCF \eqref{IMCF_flow} defines a fully non-linear system of parabolic PDE, but it admits an elliptic formulation as well: consider the following degenerate-elliptic exterior Dirichlet problem on a Riemannian manifold $(M^{n},g)$ with boundary $\partial M$:

\begin{eqnarray} \label{IMCF}
   \text{div}\left(\frac{\nabla u(x)}{|\nabla u(x)|}\right) &=& |\nabla u(x)| \hspace{2cm} x \in M  \\
   u|_{\partial M} &\equiv& 0 \nonumber.
\end{eqnarray}
Direct verification shows that the level-set flow $F: \Sigma^{n-1} \times [0,T) \rightarrow (M^{n},g)$ of \eqref{IMCF}, where $\Sigma_{t}= \{ u =t \} \subset M^{n}$, is a solution to \eqref{IMCF_flow}. In \cite{HI}, Huisken and Ilmanen consider variational weak solutions of \eqref{IMCF}. A proper weak solution $u$ of \eqref{IMCF} is a locally Lipschitz function on $(M^{n},g)$ with $u|_{\partial M}=0$ and with compact level sets solving \eqref{IMCF} in a variational sense (see p. 364 of \cite{HI} for the precise functional). The ``weak flow surfaces" $\Sigma_{t}$ of $u$ are the sets 
\begin{equation}
 \Sigma_{t}= \partial \Omega_{t} \hspace{1cm} \Omega_{t}= \{ u < t \} \subset \subset M^{n}.   
\end{equation}
There exists a unique proper weak solution of \eqref{IMCF} on any $(M^{n},g)$ with $C^{1,\alpha}$ boundary $\partial M$ that is weakly asymptotically flat in the sense of \eqref{weakly_af}-- see Theorem 3.1 and remark 1 in \cite{HI}. Each $\Sigma_{t}$ is a $C^{1,\alpha}$ hypersurface away from a singular set $Z$ of Hausdorff dimension at most $n-8$ and has weak mean curvature $H \in L^{2}(\Sigma_{t})$ with $H(x)=|\nabla u(x)| >0$ for a.e. $x \in \Sigma_{t}$ and each $t > 0$. Finally, when $\Sigma_{0}=\partial M$ is outer-minimizing, $|\Sigma_{t}|= e^{t}|\Sigma_{0}|$ as with the classical flow.

Wei's proof of monotonicity in Section 4 of \cite{Wei} works if the Schwarzschild potential $V_{m}$ is replaced with an arbitrary static potential $V$ as noted by McCormick, but another important point is that this monotonicity argument \textit{is completely local}, i.e. is independent of asymptotics. We include the proof of monotonicity under either assumption of Theorem \ref{Mccormick} here for convenience-- we also remark that the additional minimal components of $\partial M$ in low dimensions must be stable, which is not stated in Theorem 1.1 of \cite{Mc}. When $\partial M \setminus \Sigma$ is non-empty, weak IMCF must be interrupted at discrete times to jump over the additional boundary components as in Section 6 of \cite{HI}, and when $n>7$ one cannot necessarily restart the weak flow from these constructed jumps due to a singular set of positive Hausdorff dimension. However, it is not necessary to construct jumps when $\partial M$ is connected, and so showing weak flow monotonicity in all dimensions suffices in that case. For the asymptotic analysis, only the assumptions of Huisken-Ilmanen's blow-down lemma (Lemma 7.1 in \cite{HI}) are needed to determine the $L^{1}$ limit of the weak mean curvature.

\subsection{Monotonicity}
In studying variational solutions of \eqref{IMCF}, it is useful to consider a solution $u^{\epsilon}$ the perturbed, non-degenerate elliptic Dirichlet problem

\begin{eqnarray} \label{perturbed_dirichlet}
    \text{div} \left( \frac{\nabla u^{\epsilon}(x)}{\sqrt{|\nabla u^{\epsilon}(x)|^{2} + \epsilon^{2}}} \right) &=& \sqrt{|\nabla u^{\epsilon}(x)|^{2} + \epsilon^{2}} \hspace{2cm} x \in \Omega_{L} \label{u_epsilon} \\
    u^{\epsilon}|_{\partial M} &=& 0, \\ \nonumber
    u^{\epsilon}|_{\partial \Omega_{L} \setminus \partial M} &=& L-2. \nonumber
\end{eqnarray}
Here, the domain $\Omega_{L}= \{ v < L \}$ is defined using a subsolution at infinity of \eqref{IMCF}-- for $(M^{n},g)$ satisfying \eqref{af}, this is given by the expanding sphere solution $v(x)= (n-1) \log(|x|)$. From section 3 of \cite{HI}, there exists a sequence $\epsilon_{i} \rightarrow 0$ such that $u_{i}=u^{\epsilon_{i}}$ solving \eqref{perturbed_dirichlet} converge locally uniformally to a weak solution $u$ of \eqref{IMCF}. Due to elliptic regularity, each $u_{i}$ is proper and smooth, and so almost every level set $\Sigma^{i}_{s}=\{ u_{i} = s \}$ is regular. This allows one to apply the following lemma to these $u_{i}$ over static $(M^{n},g)$.

\begin{lemma}
Let $(M^{n},g,V)$ be static and $u: M^{n} \rightarrow \mathbb{R}^{+}$ a proper smooth function with $u|_{\partial M} \equiv 0$. Suppose also that the set $C(u)$ of critical points of $u$ has $\mathcal{H}^{n-1}$ measure $0$. Define the domain $\Omega_{t} = \{ u \leq t \} \subset M^{n}$, and let $\Phi: (0,t) \rightarrow \mathbb{R}^{+}$ be Lipschitz and compactly supported. Then for the function
\begin{equation*}
    \phi(x) = (\Phi \circ u)(x), \hspace{2cm} x \in \Omega_{t},
\end{equation*}
 we have the integral identity

 \begin{equation} \label{integral_identity}
     - \int_{\Omega_{t}} V H \frac{\partial \phi}{\partial \nu} d\Omega = \int_{\Omega_{t}} \phi [2H \frac{\partial V}{\partial \nu} + V(H^{2} - |h|^{2})] d \Omega,
 \end{equation}
 where $\nu$, $H$, and $h$ are the unit normal, mean curvature, and second fundamental form of the regular level sets of $u$.
\end{lemma}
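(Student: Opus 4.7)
The plan is to pass from the volume integrals on $\Omega_t$ to integrals over the level sets via the coarea formula, integrate by parts in the level-set parameter, and then use the first variation formulas in tandem with the static equations. Since $\phi = \Phi\circ u$ satisfies $\pl_\nu \phi = \Phi'(u)|\na u|$, the coarea formula rewrites
\[
-\int_{\Omega_t} V H\, \pl_\nu \phi \, dV = -\int_0^t \Phi'(s) \lt( \int_{\Sigma_s} V H\, d\sigma\rt) ds,
\]
and integration by parts in $s$ (the boundary terms vanish because $\Phi$ is compactly supported in $(0,t)$) reduces the lemma to the one-variable identity
\[
\frac{d}{ds} \int_{\Sigma_s} V H \,d\sigma = \int_{\Sigma_s} \frac{1}{|\na u|} \lt[ 2H\pl_\nu V + V(H^2-|h|^2) \rt] d\sigma,
\]
after which coarea in reverse reassembles the right-hand side of \eqref{integral_identity}.

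The left-hand side is the first variation of $\int V H\,d\sigma$ under the level-set flow of $u$, which moves with normal velocity $F = 1/|\na u|$ along $\nu$. I would apply the standard formulas $\pl_s d\sigma = F H\,d\sigma$, $\pl_s V = F \pl_\nu V$, and $\pl_s H = -\Delta_\Sigma F - F(|h|^2 + \text{Ric}(\nu,\nu))$ (the last already recorded as \eqref{H_evolution} for the specific speed $F = 1/H$). Combining these and integrating the $V\Delta_\Sigma F$ term by parts on the closed hypersurface $\Sigma_s$ to move the Laplacian onto $V$ yields
\[
\frac{d}{ds}\int_{\Sigma_s} VH\,d\sigma = \int_{\Sigma_s} F\lt[ H\pl_\nu V - \Delta_\Sigma V - V|h|^2 - V\,\text{Ric}(\nu,\nu) + VH^2 \rt] d\sigma.
\]

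The decisive input is now the static system. On any regular level set, the ambient Laplacian decomposes as $\Delta_g V = \Delta_\Sigma V + \text{Hess}_g V(\nu,\nu) + H\pl_\nu V$, so $\Delta_g V = 0$ gives $-\Delta_\Sigma V = \text{Hess}_g V(\nu,\nu) + H\pl_\nu V$, and the first static equation replaces $\text{Hess}_g V(\nu,\nu)$ with $V\,\text{Ric}(\nu,\nu)$. Substituting into the integrand cancels the $V\,\text{Ric}(\nu,\nu)$ contribution and produces an additional copy of $H\pl_\nu V$, leaving precisely $\int_{\Sigma_s} F[2H\pl_\nu V + V(H^2 - |h|^2)]\,d\sigma$, which is the identity needed.

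The only real technical point is the critical set $C(u)$. By Sard's theorem $u(C(u))\subset(0,\infty)$ has Lebesgue measure zero, so $\Sigma_s$ is a smooth regular hypersurface for a.e.\ $s\in(0,t)$, and the pointwise variation formulas and the intrinsic/extrinsic decomposition of $\Delta_g V$ apply on a full-measure set of level sets. Because $C(u)$ has $\mathcal{H}^{n-1}$-measure zero in $M$, the coarea formula applies as stated, and differentiation under the integral sign in $s$ is justified by dominated convergence using the properness of $u$ and the compactness of $\mathrm{supp}\,\Phi$ inside $(0,t)$. This is the only delicate step; everything else is a pointwise computation on regular level sets.
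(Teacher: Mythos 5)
Your proposal follows the same core computation as the paper: the variation formula for $H$ under the level-set flow with speed $F = |\nabla u|^{-1}$, integration by parts on the level set to move $\Delta_\Sigma$ onto $V$, and the trace of the static system, $\Delta_\Sigma V + V\,\mathrm{Ric}(\nu,\nu) + H\,\partial_\nu V = 0$, to cancel the Ricci term. The algebra is right and yields exactly the integrand $2H\,\partial_\nu V + V(H^2-|h|^2)$.

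The genuine difference is organizational, and it creates a gap. You convert the left side to $-\int_0^t\Phi'(s)\,g(s)\,ds$ with $g(s)=\int_{\Sigma_s}VH\,d\sigma$, then integrate by parts in $s$ and identify $g'(s)$ with the first-variation formula; the paper instead never differentiates $g$ in $s$ at all. It computes $\mathrm{div}(V\phi H\nu)$ pointwise, integrates over $\Omega_t$ using the divergence theorem (the boundary contribution vanishes since $\phi$ has compact support), and evaluates $\int_{\Omega_t} V\phi H\,\mathrm{div}\,\nu$ by a second application of coarea. Your route needs $g$ to be absolutely continuous on $(0,t)$ with a.e.\ derivative equal to $\lambda(s)=\int_{\Sigma_s}F\bigl[2H\partial_\nu V + V(H^2-|h|^2)\bigr]\,d\sigma$. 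Sard's theorem and dominated convergence do not give you this: they justify the pointwise variation formula on a.e.\ regular level set and let you pass limits inside a fixed $\sigma$-integral, but they do not rule out $g$ having a singular part as $s$ crosses critical values of $u$. To close the gap, either adopt the paper's divergence-theorem bookkeeping, or prove $g(s_2)-g(s_1)=\int_{s_1}^{s_2}\lambda(s)\,ds$ directly for a.e.\ $s_1<s_2$ by applying the divergence theorem to $VH\nu$ on the shell $\overline{\Omega_{s_2}}\setminus\Omega_{s_1}$ — which is precisely where the hypothesis $\mathcal{H}^{n-1}(C(u))=0$ earns its keep — and then the integration by parts becomes legitimate.
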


\begin{proof}
We will initially assume $\Phi \in C^{1}_{0}(0,t)$. By Sard's theorem, $\nabla u$ is non-vanishing over the level set $\Sigma_{s}= \{ u = s \}$ for a.e. $s \in [0,t)$, and hence those level sets are regular. For a regular level set $\Sigma_{s}$, we consider the variation field

\begin{equation*}
    \frac{\nabla u}{|\nabla u|^{2}} = |\nabla u|^{-1} \nu.
\end{equation*}
The variation formula for mean curvature reads

\begin{equation*}
    -|\nabla u|^{-1} \frac{\partial H}{\partial \nu} = \Delta_{\Sigma_{s}} |\nabla u|^{-1} + (|h|^{2} + \text{Ric}(\nu,\nu)) |\nabla u|^{-1}.
\end{equation*}
Multiplying by $V$ and integrating yields

\begin{eqnarray} \label{level_set_u}
    \int_{\Sigma_{s}} -V |\nabla u|^{-1} \frac{\partial H}{\partial \nu} d\sigma &=& \int_{\Sigma_{s}} V \Delta_{\Sigma_{s}} |\nabla u|^{-1} + (|h|^{2} + \text{Ric}(\nu,\nu)) V |\nabla u|^{-1} d\sigma \nonumber \\
    &=& \int_{\Sigma_{s}} (\Delta_{\Sigma_{s}} V + V \text{Ric}(\nu,\nu) +|h|^{2}) |\nabla u|^{-1} d\sigma \\
    &=& \int_{\Sigma_{s}} - H |\nabla u|^{-1} \frac{\partial V}{\partial \nu} + V|h|^{2} |\nabla u|^{-1} d\sigma. \nonumber 
\end{eqnarray}
In the last line of we used the static equation $\Delta_{\Sigma_{s}} V + V \text{Ric}(\nu,\nu) = - \frac{\partial V}{\partial \nu} H$. Since \eqref{level_set_u} holds on $\Sigma_{s}$ for a.e. $s \in (0,t)$, we find by the co-area formula that

\begin{eqnarray} \label{H_deriv_identity}
    \int_{\Omega_{t}} V \phi \frac{\partial H}{\partial \nu} d\Omega &=& \int_{0}^{t} \int_{\Sigma_{s}} V \phi \frac{\partial H}{\partial \nu} |\nabla u|^{-1} d\sigma ds = \int_{0}^{t} \Phi(s) \int_{\Sigma_{s}} V \frac{\partial H}{\partial \nu} |\nabla u|^{-1} d\sigma ds \nonumber \\
    &=& \int_{0}^{t} \Phi(s) \int_{\Sigma_{s}} H \frac{\partial V}{\partial \nu} |\nabla u|^{-1} - V|h|^{2} |\nabla u|^{-1} d\sigma ds \\
    &=& \int_{\Omega_{t}} \phi (H \frac{\partial V}{\partial \nu} - V|h|^{2}) d\Omega. \nonumber
\end{eqnarray}
We compute

\begin{eqnarray} \label{divergence_formula}
    \text{div}( V \phi H \nu) &=& \phi H \frac{\partial V}{\partial \nu} + V H \frac{\partial \phi}{\partial \nu} + V \phi \frac{\partial H}{\partial \nu} + V\phi H \text{div} \nu
\end{eqnarray}
Because $\Phi$ is compactly supported, $\phi$ vanishes over $\partial \Omega_{t}= \{ u = t\} \cup \partial M$. Therefore, the integral of the left-hand side of \eqref{divergence_formula} vanishes, and we are left with

\begin{eqnarray} \label{final_identity}
    -\int_{\Omega_{t}} VH \frac{\partial \phi}{\partial \nu} d\Omega &=& \int_{\Omega_{t}} \phi H \frac{\partial V}{\partial \nu} + V \phi \frac{\partial H}{\partial \nu} + V\phi H \text{div} \nu d\Omega \\
    &=& \int_{\Omega_{t}} \phi( 2H \frac{\partial V}{\partial \nu} - V|h|^{2}) d\Omega + \int_{\Omega_{t}} V\phi H \text{div} \nu d\Omega. \nonumber
\end{eqnarray}
In the last line, we applied \eqref{H_deriv_identity}. For the divergence term, we once again compute via the co-area formula
\begin{eqnarray*}
    \int_{\Omega_{t}} V\phi H \text{div} \nu d\Omega &=& \int_{0}^{t} \int_{\Sigma_{s}} V \phi H \text{div}(\nu) |\nabla u|^{-1} d\sigma ds \\
    &=& \int_{0}^{t} \int_{\Sigma_{s}} V \phi H \text{div}_{\Sigma_{s}}(\nu) |\nabla u|^{-1} d\sigma ds \\
    &=& \int_{0}^{t} \int_{\Sigma_{s}} V \phi H^{2} |\nabla u|^{-1} d\sigma ds \\
    &=& \int_{\Omega_{t}} V\phi H^{2} d\Omega.
\end{eqnarray*}
Substituting this into \eqref{final_identity} yields \eqref{integral_identity}.
\end{proof}
%%\begin{remark}
%The assumption that the critical set of $u$ has measure $0$ is not mentioned in \cite{Wei}, but we mention it here since uses of the co-area formula like the one in \eqref{level_set_u} are sometimes applied erroneously in the literature. This is addressed in \cite{coarea}, where the authors demonstrate that the formula

%\begin{equation*}
   %\int_{\Omega_{t}} f d\Omega = \int_{0}^{t} \int_{u=s} \frac{f}{|\nabla u|} d\sigma ds
%\end{equation*}
%requires $\mathcal{H}^{n-1}(C(u))=0$. Wei applies this integral identity to $u^{\epsilon}$ solving \eqref{u_epsilon}. It is easy to verify that the $\Delta u^{\epsilon}$ is non-vanishing over $C(u^{\epsilon})$, which in turn justifies \eqref{level_set_u}.
%\end{remark}
Wei shows the following convergence for the mean curvature of the regular level sets of $u_{i}$ solving \eqref{u_epsilon} to the mean curvature $H$ of the weak solution $u$ of \eqref{IMCF} (See section 2 and the proof of Lemma 4.2 of \cite{Wei} and section 5 of \cite{HI}):

\begin{eqnarray}
        \int_{\Omega^{i}_{t}} \phi H_{i} d\Omega &\rightarrow& \int_{\Omega_{t}} \phi H d\Omega, \hspace{2cm} \phi \in C^{0}_{c}(\Omega_{t}), \label{L1_conv} \\
     \int_{\Omega^{i}_{t}} \phi H_{i}^{2} d\Omega &\rightarrow& \int_{\Omega_{t}} \phi H^{2} d\Omega, \label{L2_conv}
\end{eqnarray}

Via the inequality $H^{2} \leq (n-1) |h|^{2}$, \eqref{integral_identity} yields an integral inequality for the $u_{i}$  in terms of the mean curvature $H_{i}$ and the normal derivatives of $u_{i}$ and $V$. Using \eqref{L1_conv}, \eqref{L2_conv}, this inequality passes to the level sets $\Sigma_{t}$ of the variational solution. An appropriate choice of $\Phi$ then yields an upper bound on the $L^{1}$ norm of $VH$.

\begin{lemma} 
Let $(M^{n},g)$ be asymptotically flat with connected, outer-minimizing boundary $\partial M=\Sigma$, and let $\Sigma_{t}$ the weak solution to IMCF with initial data $\Sigma$. Suppose $(M^{n},g)$ admits a static potential $V$. Then for all $0\leq \tilde{t} < t$ we have

\begin{equation} \label{key_inequality}
    \int_{\Sigma_{t}} VH d\sigma \leq \int_{\Sigma_{\tilde{t}}} VH d\sigma + \frac{n-2}{n-1} \left( \int_{\tilde{t}}^{t} \int_{\Sigma_{s}} VH d\sigma + 2(n-1)m w_{n-1} ds \right).
\end{equation}
\end{lemma}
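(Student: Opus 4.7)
The plan is to establish the inequality first for the smooth approximating solutions $u_{i} = u^{\epsilon_{i}}$ of the non-degenerate problem \eqref{perturbed_dirichlet}, then pass to the variational limit $u$ via the convergences \eqref{L1_conv} and \eqref{L2_conv}. Each $u_{i}$ is smooth and proper with $u_{i}|_{\partial M} = 0$, and $C(u_{i})$ has vanishing $\mathcal{H}^{n-1}$ measure (as pointed out in the remark following the lemma, this is guaranteed by $\Delta u^{\epsilon}$ being non-vanishing on $C(u^{\epsilon})$). Hence the integral identity \eqref{integral_identity} applies to $u_{i}$ with any compactly supported Lipschitz cutoff $\Phi \colon (0,\infty) \to \R^{+}$ and $\phi = \Phi \circ u_{i}$.

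First I would substitute the trace inequality $|h_{i}|^{2} \geq H_{i}^{2}/(n-1)$ into \eqref{integral_identity}, using $V > 0$, to obtain
\begin{equation*}
-\int_{\Omega_{t}} V H_{i} \frac{\pl \phi}{\pl \nu_{i}} d\Omega \; \leq \; \int_{\Omega_{t}} \phi \lt[ 2 H_{i} \frac{\pl V}{\pl \nu_{i}} + \frac{n-2}{n-1} V H_{i}^{2} \rt] d\Omega.
\end{equation*}
Choosing $\Phi$ to approximate $\chi_{[\tilde t, t]}$ and applying the coarea formula to both sides (noting $\pl \phi / \pl \nu_{i} = \Phi'(u_{i}) |\na u_{i}|$), the left side collapses to the boundary difference $\int_{\Sigma^{i}_{t}} V H_{i} d\sigma - \int_{\Sigma^{i}_{\tilde t}} V H_{i} d\sigma$, while the right side becomes a volume integral of the pointwise quantity above over $\Omega_{t} \setminus \Omega_{\tilde t}$.

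The crucial simplification on the right comes from harmonicity of $V$: the divergence theorem applied to $\na V$ over the domain bounded by $\partial M$ and $\Sigma^{i}_{s}$ gives
\begin{equation*}
\int_{\Sigma^{i}_{s}} \frac{\pl V}{\pl \nu_{i}} d\sigma = \int_{\partial M} \frac{\pl V}{\pl \nu} d\sigma = (n-2) w_{n-1} m,
\end{equation*}
which is constant in $s$. Together with the fact that $H_{i}/|\na u_{i}| \to 1$ as $\epsilon_{i} \to 0$, this reduces the first bracketed term to $2(n-2) w_{n-1} m (t - \tilde t)$ in the limit, while the $V H_{i}^{2}$ term converges to $\frac{n-2}{n-1} \int_{\tilde t}^{t} \int_{\Sigma_{s}} VH d\sigma ds$ by \eqref{L2_conv} combined with $H = |\na u|$ a.e.\ for the weak solution (which cancels the $|\na u|^{-1}$ produced by coarea). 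A rearrangement then yields \eqref{key_inequality}.

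The main obstacle is the pointwise-in-$t$ convergence of the boundary quantity $F^{i}(t) := \int_{\Sigma^{i}_{t}} V H_{i} d\sigma$ to $F(t) := \int_{\Sigma_{t}} VH d\sigma$, since \eqref{L1_conv} and \eqref{L2_conv} control only \emph{spatial} integrals of $H_{i}$ over open sets, whereas the left-hand side of \eqref{key_inequality} is a surface integral on a set of measure zero. I would circumvent this by first establishing \eqref{key_inequality} for $t, \tilde t$ in a full-measure set on which $\Sigma^{i}_{s} \to \Sigma_{s}$ in $C^{1,\alpha}$ away from the singular set (as in (1.9)--(1.10) of \cite{HI}), guaranteeing $F^{i}(s) \to F(s)$ there, and then extending to arbitrary $t, \tilde t$ by approximation using the $L^{1}$-left-continuity of $F$ along the weak flow.
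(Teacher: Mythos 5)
Your ingredients match the paper's — the integral identity \eqref{integral_identity}, the trace inequality $|h_i|^2 \ge H_i^2/(n-1)$, the coarea formula, the divergence theorem for the harmonic $V$, and an extension to all $\tilde{t} < t$ at the end — but you swap the order of two essential steps, and this opens a gap that you correctly identify but do not actually close.

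The paper first passes from $u_i$ to the weak solution $u$ at the level of \emph{volume} integrals: using \eqref{L1_conv}--\eqref{L2_conv} alone (together with the uniform convergence $\Phi\circ u_i \to \Phi\circ u$), it obtains the inequality \eqref{weak_imcf_inequality} for an arbitrary Lipschitz cutoff $\Phi\in C^{0,1}_0((0,t))$. Only then does it apply coarea to $u$ itself — where $H=|\nabla u|$ holds a.e.\ exactly, with no $H_i$ versus $|\nabla u_i|$ discrepancy to track — and choose the piecewise-linear $\Phi\approx\chi_{[\tilde{t},t]}$. The surface integrals at $\tilde{t}$ and $t$ then appear through Lebesgue differentiation, valid for a.e.\ pair $(\tilde{t},t)$, and the extension to all pairs only requires $L^1$-continuity of $\int_{\Sigma_s}VH\,d\sigma$ along the weak flow of a \emph{single} weak solution (this is (1.10) and (1.13) of \cite{HI}). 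At no point does the paper ever need to converge a surface integral from the regularization.

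You instead apply coarea and the specific $\Phi$ to $u_i$ first, which forces you to converge the boundary terms $F^i(t)=\int_{\Sigma^i_t}VH_i\,d\sigma$ to $F(t)=\int_{\Sigma_t}VH\,d\sigma$. You flag this as the ``main obstacle,'' but your fix does not close it: $C^{1,\alpha}$ convergence of hypersurfaces controls the induced metric and the area form, not the mean curvature (a second-order quantity), so by itself it gives at best weak convergence of $H_i$, not $F^i(s)\to F(s)$ — one would additionally need strong $L^1$ convergence of $H_i\big|_{\Sigma^i_s}$ to the weak mean curvature of $\Sigma_s$, which requires a further argument that you do not supply. Moreover, (1.9)--(1.10) of \cite{HI} govern the convergence $\Sigma_{s'}\to\Sigma_s$ \emph{within} the weak flow of $u$ as $s'\to s$, not the convergence $\Sigma^i_s\to\Sigma_s$ coming from the elliptic regularization $\epsilon_i\to 0$, so they are the wrong reference for the statement you need. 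Finally, your appeal to ``$H_i/|\nabla u_i|\to 1$'' is only available in the integrated sense already packaged inside \eqref{L1_conv}--\eqref{L2_conv}, not on individual level sets, so reducing the $2H_i\,\partial_\nu V$ term directly to $2(n-2)w_{n-1}m(t-\tilde{t})$ is not justified as written. Reordering the steps as the paper does — volume-integral limit first, coarea and Lebesgue differentiation on the weak solution second — removes all of these difficulties simultaneously.
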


\begin{proof}
For $u^{\epsilon_{i}}=u_{i}$, we have by \eqref{integral_identity} and the inequality $H^{2} \leq (n-1) |h|^{2}$, we have that

\begin{equation*}
    -\int_{\Omega^{i}_{t}} VH_{i} \frac{\partial \phi}{\partial \nu} d\Omega \leq \int_{\Omega^{i}_{t}} \phi_{i} [ 2 H_{i} \frac{\partial V}{\partial \nu_{i}} + \frac{n-2}{n-1} H_{i}^{2}] d\Omega.
\end{equation*}
Taking limits then yields

\begin{equation} \label{weak_imcf_inequality}
    -\int_{\Omega_{t}} VH \frac{\partial \phi}{\partial \nu} d\Omega \leq \int_{\Omega_{t}} \phi[2H \frac{\partial V} {\partial \nu} + \frac{n-2}{n-1} VH^{2}] d\Omega
\end{equation}
\
Applying \eqref{weak_imcf_inequality} along with the co-area formula for Lipschitz functions, c.f. \cite{S18}, we find for any $\Phi \in C^{0,1}_{0}((0,t))$ that 

\begin{eqnarray*}
    -\int_{0}^{t} \Phi'(s) \int_{\Sigma_{s}} VH d\mu_{s} ds &=& - \int_{\Omega_{t}} \Phi'(s) \frac{\partial u}{\partial \nu} VH d\Omega \\
    &\leq& \int_{\Omega_{t}} \phi \left( 2 \frac{\partial V}{\partial \nu} H + \frac{n-2}{n-1} VH^{2} \right) d\Omega \\
    &=& \int_{\Omega_{t}} \phi \left( 2 \frac{\partial V}{\partial \nu} + \frac{n-2}{n-1} VH \right) |\nabla u| d\Omega \\
    &=& \int_{0}^{t} \Phi(s) \int_{\Sigma_{s}} \left( 2 \frac{\partial V}{\partial \nu} + \frac{n-2}{n-1} VH \right) d\sigma ds.
\end{eqnarray*}

for $\Phi \in C^{0,1}_{0}((0,t))$. Fix $0 \leq \tilde{t} < t$, and for $0 < \delta < \frac{1}{2} (t-\tilde{t}) $, choose

\begin{equation*}
    \Phi(s) = \begin{cases} 0 & s \in (0,\tilde{t})  \\
    \frac{1}{\delta} (s- \tilde{t}) & s \in (\tilde{t}, \tilde{t} + \delta) \\
    1 & s \in (\tilde{t} + \delta, t - \delta) \\
    \frac{1}{\delta} (t-s) & s \in (t - \delta, t).
    
    \end{cases}
\end{equation*}
Then 

\begin{equation*}
    \frac{1}{\delta} \int_{t - \delta}^{t} \int_{\Sigma_{s}} VH d\sigma ds - \frac{1}{\delta} \int_{\tilde{t}}^{\tilde{t}+\delta} \int_{\Sigma_{s}} VH d\sigma ds \leq \int_{\tilde{t}}^{t} \int_{\Sigma_{s}} \left( 2 \frac{\partial V}{\partial \nu} + \frac{n-2}{n-1} VH \right) d\sigma ds.
\end{equation*}
The limit of the left-hand side as $\delta \rightarrow 0$ is well-defined for a.e. $0 < \tilde{t} < t$, and so for a.e. $0 < \tilde{t} < t$ we have

\begin{eqnarray}
    \int_{\Sigma_{t}} VH d\mu - \int_{\Sigma_{\tilde{t}}} VH d\sigma &\leq& \int_{\tilde{t}}^{t} \int_{\Sigma_{s}} \left( 2 \frac{\partial V}{\partial \nu} + \frac{n-2}{n-1} VH \right) d\sigma ds \nonumber \\
    &=& \int_{\tilde{t}}^{t} \left( 2(n-2) w_{n-1}m + \frac{n-2}{n-1}  \int_{\Sigma_{s}}VH d\sigma \right) ds \label{VH_upper_bound}.
\end{eqnarray}
Note that $\int_{\Sigma_{s}} \frac{\partial V}{\partial \nu} d\sigma = \int_{\Sigma_{0}} \frac{\partial V}{\partial \nu} d \sigma = (n-2)w_{n-1} m$ due to the static equation $\Delta V = 0$ and the divergence theorem applied on $\Omega_{s}$. 

To extend this inequality to all times, for any $0 < \tilde{t} < t$ we choose sequences $t_{j} \nearrow t$ and $\tilde{t}_{j} \nearrow \tilde{t}$ such that \eqref{VH_upper_bound} holds for the times $\tilde{t}_{j} < t_{j}$. By (1.10) in \cite{HI}, we have that 

\begin{equation*}
    \Sigma_{t_{j}} \rightarrow \Sigma_{t} \hspace{2cm} \text{  in  } \hspace{1cm} C^{1,\alpha}, \\
\end{equation*}
resp. for $\tilde{t}_{j},\tilde{t}$, away from a small singular set $Z$. Equation (1.13) in \cite{HI} then implies $L^{1}$ convergence of the weak mean curvature, and so altogether we find that

\begin{eqnarray*}
    \lim_{j \rightarrow \infty} \int_{\Sigma_{t_{j}}} VH d\sigma &=& \int_{\Sigma_{t}} VH d\sigma, \\
    \lim_{j \rightarrow \infty} \int_{\Sigma_{\tilde{t}_{j}}} VH d\sigma &=& \int_{\Sigma_{\tilde{t}}} VH d\sigma.
\end{eqnarray*}
Therefore, applying \eqref{VH_upper_bound} to $t_{j}, \tilde{t}_{j}$, we may conclude

\begin{eqnarray*}
    \int_{\Sigma_{t}} VH d\mu - \int_{\Sigma_{\tilde{t}}} VH d\sigma &\leq& \lim_{j \rightarrow \infty} \int_{\tilde{t}_{j}}^{t_{j}} \left( 2(n-2) w_{n-1}m + \frac{n-2}{n-1}  \int_{\Sigma_{s}}VH d\sigma \right) ds \\
    &=& \int_{\tilde{t}}^{t} \left( 2(n-2) w_{n-1}m + \frac{n-2}{n-1}  \int_{\Sigma_{s}}VH d\sigma \right) ds.
\end{eqnarray*}
\end{proof}

\begin{theorem} \label{monotone}
Under the hypotheses of the previous lemma, the functional $Q(t)$ defined by

\begin{equation}
    Q(t) = |\Sigma_{t}|^{-\frac{n-2}{n-1}} \left( \int_{\Sigma_{t}} VH d\sigma + 2(n-1) w_{n-1} m \right)
\end{equation}
is monotone non-increasing under weak inverse mean curvature flow for each $t \in (0,\infty)$.
\end{theorem}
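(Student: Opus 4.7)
The plan is to derive Theorem \ref{monotone} directly from the integral inequality \eqref{key_inequality} by combining it with the exponential area growth of weak IMCF and a Gr\"onwall-type argument. Write
\begin{equation*}
f(t) = \int_{\Sigma_t} V H \, d\sigma + 2(n-1) w_{n-1} m, \qquad c = \frac{n-2}{n-1},
\end{equation*}
so that $Q(t) = |\Sigma_t|^{-c} f(t)$. The first step is to recall that since $\Sigma=\Sigma_0$ is outer-minimizing, the proper weak solution $u$ of \eqref{IMCF} satisfies $|\Sigma_t| = e^{t} |\Sigma_0|$ (this is the exponential growth of area property of weak IMCF from Huisken--Ilmanen that is cited just before Section A.1). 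Consequently
\begin{equation*}
Q(t) = |\Sigma_0|^{-c}\, e^{-c t} f(t),
\end{equation*}
so monotonicity of $Q$ reduces to showing that $e^{-ct} f(t)$ is non-increasing in $t$.

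The second step is to rewrite \eqref{key_inequality} in terms of $f$. Integrating the constant $2(n-1) w_{n-1} m$ and rearranging gives, for all $0 \leq \tilde t < t$,
\begin{equation*}
f(t) \leq f(\tilde t) + c \int_{\tilde t}^{t} f(s) \, ds.
\end{equation*}
This is precisely an integral Gr\"onwall inequality of the form $f(t) \leq A + c \int_{\tilde t}^{t} f(s)\, ds$ with $A = f(\tilde t)$. Since $s \mapsto f(s)$ is locally bounded along the weak flow (weak mean curvature $H \in L^{2}_{\mathrm{loc}}$ and $V$ is smooth and bounded on compact pieces of $M$), the standard integral Gr\"onwall lemma applies and yields
\begin{equation*}
f(t) \leq f(\tilde t) \, e^{c(t - \tilde t)} \qquad \text{for all } \tilde t \leq t.
\end{equation*}

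The third step is then a one-line consequence: multiplying both sides by $|\Sigma_0|^{-c} e^{-c t}$ gives
\begin{equation*}
|\Sigma_0|^{-c} e^{-c t} f(t) \leq |\Sigma_0|^{-c} e^{-c \tilde t} f(\tilde t),
\end{equation*}
which is exactly $Q(t) \leq Q(\tilde t)$. The only mild subtlety is ensuring that $f$ is well-defined and behaves well across the countably many times where $\Sigma_t \neq \Sigma_t^+$; here one uses the $C^{1,\alpha}$ convergence $\Sigma_t \to \Sigma_{\tilde t}^+$ as $t \searrow \tilde t$ together with the $L^1$ convergence of the weak mean curvature (equations (1.10) and (1.13) in \cite{HI}) to guarantee that the integral inequality passes to the relevant sub- and super-level sets, and to handle right-continuity of $f$. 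There is no real obstacle beyond this bookkeeping: the entire content of the theorem is the Gr\"onwall step applied to the a priori inequality \eqref{key_inequality}, which was the main lemma.
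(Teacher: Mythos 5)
Your proof is correct and is essentially the same argument as the paper's: both set up the auxiliary function $h(t)=\int_{\Sigma_t} VH\,d\sigma + 2(n-1)w_{n-1}m$, invoke the integral inequality \eqref{key_inequality} from the preceding lemma, apply Gr\"onwall to obtain $h(t_2)\le e^{\frac{n-2}{n-1}(t_2-t_1)}h(t_1)$, and then use the exponential area growth $|\Sigma_t|=e^t|\Sigma_0|$ of outer-minimizing weak IMCF to conclude $Q(t_2)\le Q(t_1)$. The only difference is stylistic: you write out the area-growth substitution and the final multiplication explicitly, whereas the paper leaves that last step implicit; your closing remarks about right-continuity and jump times are harmless but not needed here, since the preceding lemma already establishes \eqref{key_inequality} for all $0\le\tilde t<t$.
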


\begin{proof}
Define $h: [0,\infty) \rightarrow \mathbb{R}$ by

\begin{equation*}
    h(t)= \int_{\Sigma_{t}} VHd\sigma + 2(n-1)w_{n-1}m.
\end{equation*}
It follows from Lemma A.2 that for $0 \leq t_{1} < t_{2}$, 
\begin{equation*}
    h(t_{2}) - h(t_{1}) \leq \frac{n-2}{n-1} \int_{t_{1}}^{t_{2}} h(s) ds.
\end{equation*}
It follows from Gronwall's Lemma that $h(t_{2}) \leq e^{\frac{n-2}{n-1} (t_{2}-t_{1})} h(t_{1})$, and hence $Q(t_{2}) \leq Q(t_{1})$.
\end{proof}
For the case of $n < 8$ and additional horizon boundary components, we use a result by \cite{HMM} that McCormick applies in the asymptotically flat case (this is where the area-minimizing assumption is necessary). We argue slightly differently here, but the conclusion is identical.
\begin{lemma} \label{extra_boundary}
Let $(M,g,V)$, $3 \leq n \leq 7$, be asymptotically flat  with boundary satisfying the second item of Theorem \ref{Mccormick}. Let $\Sigma_{i} \subset \partial M \setminus \Sigma$ be a locally area-minimizing hypersurface. Then

\begin{equation*}
    \int_{\Sigma_{i}} \frac{\partial V}{\partial \nu} d\sigma \geq 0.
\end{equation*}
As a result, if $\Omega \subset M^{n}$ is a bounded domain, we have that

\begin{equation} \label{m_upper_bound}
\int_{\partial \Omega \setminus \partial M} \frac{\partial V}{\partial \nu} d\sigma \leq (n-2) w_{n-1} m.
\end{equation}
\end{lemma}

\begin{proof}
By Lemma 4 of \cite{HMM}, we have either that $V|_{\Sigma_{i}} > 0$ or $V|_{\Sigma_{i}} \equiv 0$. By the arguments in that lemma, we know that $\frac{\partial V}{\partial \nu} = \kappa$ for a constant $k >0$ in the latter case. In the former case, Proposition 5(3) of \cite{HMM} implies that the Ricci curvature of $V$ vanishes in a collar neighborhood of $\Sigma_{i}$. For the $\Lambda =0$ static equations, this means that $V$ is constant within this neighborhood, which implies $V \equiv 1$ on $M$. Then $\int_{\Sigma_{i}} \frac{\partial V}{\partial \nu} d\sigma \geq 0$ and hence $\int_{\partial \Omega \cap \partial M} \frac{\partial V}{\partial \nu} d\sigma \leq (n-2)w_{n-1} m$. Applying the divergence theorem over $\Omega$ then yields \eqref{m_upper_bound}.
\end{proof}
For a proper weak solution $u: (M^{n},g) \rightarrow \mathbb{R}$ to IMCF with $u|_{\Sigma} \equiv 0$, \eqref{m_upper_bound} implies that \eqref{VH_upper_bound}, and hence monotonicity of $Q(t)$, holds on $\Sigma_{t}= \partial \Omega_{t} \setminus \partial M$. Arguing the same as in Section 6 of \cite{HI} and Section 4.2 of \cite{Wei}, we also obtain monotonicity of weak IMCF with constructed jumps. 

\begin{theorem} \label{constructed_jumps}
Let $(M,g,V)$, $3 \leq n \leq 7$, be asymptotically flat  with boundary satisfying the second item of Theorem \ref{Mccormick}. Then there exists a flow $\{ \Sigma_{t} \}_{0 \leq t < \infty}$ of $C^{1,\alpha}$ hypersurfaces such that $Q(t)$ is monotone non-increasing in $t$ and $\{ \Sigma_{t} \}_{T \leq t < \infty}$ are a proper weak solution to IMCF for a sufficiently large $T$.
\end{theorem}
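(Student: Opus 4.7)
The plan is to adapt the Huisken--Ilmanen jump construction from Section 6 of \cite{HI} to the static setting, using the mass estimate of Lemma \ref{extra_boundary} in place of the flux identity for $V$ whenever the enclosed region has not yet absorbed all minimal components.

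First, I would start the proper weak solution of IMCF from $\Sigma$ on $(M^n,g)$, which exists by Section 3 of \cite{HI}, and run it until the enclosed domain $\Omega_{t}$ is about to meet a component $\Sigma_{i}\subset \partial M\setminus \Sigma$. On each such smooth segment the argument of Lemma A.2 goes through with the divergence-theorem identity $\int_{\Sigma_{s}}\tfrac{\partial V}{\partial \nu}\,d\sigma = (n-2)w_{n-1}m$ replaced by the inequality $\int_{\Sigma_{s}}\tfrac{\partial V}{\partial \nu}\,d\sigma \le (n-2)w_{n-1}m$ from Lemma \ref{extra_boundary}. Gronwall's lemma (exactly as in Theorem \ref{monotone}) then yields non-strict monotonicity of $Q(t)$ along each smooth segment.

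Second, I would handle the jump at a critical time $t_{0}$ where $\Sigma_{t}$ first touches some $\Sigma_{i}$. Following Section 6 of \cite{HI}, I would replace $\Sigma_{t_{0}}$ by $\widetilde{\Sigma}_{t_{0}} = \partial^{*} E_{t_{0}}$, where $E_{t_{0}}$ is the strictly outer-minimizing hull of $\Omega_{t_{0}}\cup \Sigma_{i}$. The dimensional restriction $3\le n\le 7$ and the stability of $\Sigma_{i}$ ensure the required $C^{1,\alpha}$-regularity of $\widetilde{\Sigma}_{t_{0}}$ via standard perimeter-minimizer theory. By the outer-minimizing property $|\widetilde{\Sigma}_{t_{0}}| = |\Sigma_{t_{0}}|$, and $\widetilde{\Sigma}_{t_{0}}$ decomposes into portions of $\Sigma_{t_{0}}$ together with pieces either coinciding with $\Sigma_{i}$ or with minimal envelope pieces, on each of which the weak mean curvature vanishes. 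Hence $\int_{\widetilde{\Sigma}_{t_{0}}} VH\,d\sigma \le \int_{\Sigma_{t_{0}}} VH\,d\sigma$, so $Q$ does not increase across the jump.

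After the jump, I would restart the weak flow from $\widetilde{\Sigma}_{t_{0}}$, which is strictly outer-minimizing in $(M\setminus E_{t_{0}},g)$; in the range $n\le 7$ the smooth-start lemma of \cite{HI} applies because the singular set of the stable minimal hypersurfaces has codimension at least $8$, i.e.\ is empty here. Since $\partial M$ has only finitely many components, after finitely many jumps the enclosed region $\Omega_{T}$ contains all of $\partial M$, and from time $T$ onward the divergence theorem gives equality in the flux identity and the flow is an ordinary proper weak IMCF in the sense of \cite{HI}. Concatenating smooth monotonicity on each segment with non-increase across each of the finitely many jumps gives the desired monotonicity of $Q$ on $[0,\infty)$. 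The main subtlety I expect is the jump step: verifying $C^{1,\alpha}$ regularity of the hull, the identification $H\equiv 0$ on the envelope pieces, and the compatibility with the Lemma \ref{extra_boundary} mass bound during the time the flow has absorbed only a proper subset of $\partial M\setminus \Sigma$, all of which pin down the $n\le 7$ hypothesis via geometric measure theory.
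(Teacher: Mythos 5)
Your proposal follows essentially the same route as the paper's: fill in the stable minimal boundary components, run proper weak IMCF from $\Sigma$, jump to a minimizing hull at each critical time, and use Lemma~\ref{extra_boundary} to replace the exact flux identity $\int_{\Sigma_s}\partial_\nu V\,d\sigma=(n-2)w_{n-1}m$ by the one-sided bound needed for the Gronwall argument on each smooth segment. Two details should be fixed. First, the jump time $t_1$ is defined as the first time at which $\Omega_{t_1}\cup W_{j_1}$ (with $W_{j_1}$ the chosen fill-in) ceases to be outer-minimizing in $(M^n,g)$; in general this happens \emph{strictly before} $\Sigma_t$ touches $\Sigma_{j_1}$, so ``first touching'' is not the correct criterion. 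Second, the area relation across the jump is the inequality $|\partial F_1|\ge|\Sigma_{t_1}|$, which follows from $\Sigma_{t_1}$ being outer-minimizing and $F_1\supset\Omega_{t_1}$; the equality $|\widetilde{\Sigma}_{t_0}|=|\Sigma_{t_0}|$ you assert is not a consequence of outer-minimality and is also unnecessary, since $\ge$ together with $\int_{\partial F_1}VH\,d\sigma\le\int_{\Sigma_{t_1}}VH\,d\sigma$ (from $H\equiv 0$ a.e.\ on $\partial F_1\setminus\Sigma_{t_1}$, which you did identify) already forces $Q$ not to increase across the jump. With these corrections, restarting the proper weak flow from the $C^{1,\alpha}$ hull boundary and iterating over the finitely many components gives exactly the paper's argument.
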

\begin{proof}
Take fill-ins of $W_{1}, \dots, W_{k}$ of the minimal boundary components $\Sigma_{1}, \dots, \Sigma_{k}$. For the weak IMCF $\Sigma_{t}$ of $\Sigma$, there exists a time $t_{1} > 0$ and fill-in $W_{j_{1}}$ such that $\Omega_{t_{1}} \cup W_{j_{1}}$ is not outer-mininimizing in $(M^{n},g)$. Let $F_{1}$ be the minimizing hull of $\Omega_{t_{1}} \cup W_{j_{1}}$. Note that $\partial F_{1}$ is a $C^{1,\alpha}$ hypersurface for $n < 8$ by Theorem 1.3 from \cite{HI}. Furthermore, $\partial F_{1}$ is connected, since any components of $\partial F_{1}$ which do not enclose $\Sigma_{t_{1}}$ are area-minimizing hypersurfaces and hence are disjoint from $\partial_{j_{1}} M$ by the maximum principle, which contradicts Lemma 4 of \cite{HMM}. The boundary of $F_{1}$ satisfies

\begin{eqnarray}
    |\partial F_{1}| &\geq& |\Sigma_{t_{1}}|, \label{jump1} \\
    \int_{F_{1}} VH d\sigma &\leq& \int_{\Sigma_{t_{1}}} VH d\sigma. \label{jump2}
\end{eqnarray}
The first inequality follows from the fact that $\Omega_{t_{1}}$ is outer-minimizing in $(M^{n},g)$, while the second follows from the fact that $H = 0$ a.e. on $\partial F_{1} \setminus \Sigma_{t_{1}}$ (equation (1.15) in \cite{HI}). Next, given $\partial F_{1}$ is $C^{1,\alpha}$, we consider the proper weak solution of IMCF with initial condition $\partial F_{1}$. For $t < t_{1}$ and $t > t_{1}$, $Q(t)$ is monotone decreasing by Lemma \ref{extra_boundary} and \eqref{VH_upper_bound}. By \eqref{jump1}-\eqref{jump2}, we have that $\lim_{t \nearrow t_{1}} Q(t) \geq \lim_{t \searrow t_{1}} Q(t)$. Repeating this procedure for times $t_{2} < \dots< t_{k}$ and boundary components $W_{j_{2}}, \dots, W_{j_{k}}$ yields the conclusion.  
\end{proof}

\subsection{Asymptotic Convergence}
The $L^{1}$ limit of the weak mean curvature of $\Sigma_{t}$ determines the limit of $Q(t)$. The blow-down lemma of Huisken-Ilmanen allows one to evaluate this limit on any asymptotically flat $(M^{n},g)$.
\begin{prop}
Let $(M^{n},g)$ be asymptotically flat , and $U \cong \mathbb{R}^{n} \setminus B_{\frac{1}{2}}(0)$ be the exterior region of $(M^{n},g)$. Let $u: (M^{n},g) \rightarrow \mathbb{R}^{+}$ be a proper weak solution to IMCF with initial condition $\Sigma_{0}$, and consider the blow-down objects associated with $u$ from Section 3. Then the total mean curvature of $\widetilde{\Sigma}_{t} \subset (U,g_{t})$ converges to that of the unit sphere, i.e.

\begin{equation*}
    \lim_{t \rightarrow \infty} \int_{\widetilde{\Sigma}_{t}} \widetilde{H} d\sigma = (n-1) w_{n-1}.
\end{equation*}
\end{prop}
By the blowdown Lemma 7.1 in \cite{HI}, there exist constants $\lambda \rightarrow 0$, $c_{\lambda} \rightarrow \infty$ so that the dilated solution $u_{\lambda}(\textbf{x}) = u(\lambda^{-1} \mathbf{x}) - c_{\lambda}$ converges locally uniformally to the expanding sphere solution in $\mathbb{R}^{n} \setminus \{ 0 \}$. The gradient estimate of $u_{\lambda}$ is independent of $\lambda$ (equation $(7.2)$ in their proof), and so according to the Compactness Theorem 2.1 and the remark following it in \cite{HI}, we have that

\begin{equation*}
    \widetilde{\Sigma}_{t} \rightarrow \partial B_{1} (0) \subset U \hspace{1cm} \text{ in } \hspace{1cm} C^{1,\alpha},
\end{equation*}
 as $t \rightarrow \infty$. Once again by (7.2) in Lemma 7.1, we also have that $\widetilde{H}$ is essentially bounded as $t \rightarrow \infty$. In view of the discussion on pages 370-371, we have for every compactly-supported vector field that $ X \in \Gamma_{c}(TU)$ that

\begin{equation*}
    \int_{\widetilde{\Sigma_{t}}} \widetilde{H} \langle \widetilde{\nu}, X \rangle d\sigma \rightarrow \int_{\partial B_{1}(0)} H_{\partial B_{1}(0)} \langle \nu_{\partial B_{1}(0)}, X \rangle d\sigma \hspace{1cm} \text{ as  } \hspace{1cm} t \rightarrow \infty.
\end{equation*}
Choosing $X=\frac{\partial}{\partial r}$ and using $C^{1,\alpha}$ convergence of $\widetilde{\Sigma}_{t}$ yields $L^{1}$ convergence of $\widetilde{H}$.  
\begin{cor} \label{convergence}
Let $(M^{n},g,V)$ be asymptotically flat . Then for a proper weak solution to IMCF in $(M^{n},g,V)$ we have

\begin{equation*}
    \lim_{t \rightarrow \infty} Q(t) = (n-1)w_{n-1}^{\frac{1}{n-1}}.
\end{equation*}
\end{cor}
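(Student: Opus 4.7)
The plan is to use the blow-down of the weak flow together with Proposition A.5 to evaluate each piece of $Q(t)$ separately. Because $\Sigma_0 = \partial M$ is outer-minimizing, the exponential area law $|\Sigma_t| = e^t|\Sigma_0|$ holds, which is equivalent to the clean identity $|\Sigma_t| = w_{n-1} r(t)^{n-1}$ for the rescaling factor $r(t) = (|\Sigma_0|/w_{n-1})^{\frac{1}{n-1}} e^{\frac{t}{n-1}}$ used in Section 2. Hence the normalizer satisfies
\[
|\Sigma_t|^{-\frac{n-2}{n-1}} = w_{n-1}^{-\frac{n-2}{n-1}} r(t)^{-(n-2)}.
\]
In particular the mass contribution $2(n-1)w_{n-1} m \cdot |\Sigma_t|^{-\frac{n-2}{n-1}}$ tends to $0$ as $t \to \infty$, so it remains to analyze the curvature integral.

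Next, I would apply the blow-down $\widetilde{\mathbf{X}}_t = r(t)^{-1} \mathbf{X}_t$ and rescaled metric $g_t(\mathbf{x}) = r(t)^{-2} g(r(t)\mathbf{x})$. Under this rescaling the area element and (unnormalized) mean curvature transform by $d\sigma = r(t)^{n-1} d\widetilde{\sigma}$ and $H = r(t)^{-1} \widetilde{H}$, so
\[
\int_{\Sigma_t} V H\, d\sigma = r(t)^{n-2} \int_{\widetilde{\Sigma}_t} V(r(t)\widetilde{\mathbf{x}}) \, \widetilde{H}\, d\widetilde{\sigma}.
\]
Proposition A.5 provides $C^{1,\alpha}$ convergence $\widetilde{\Sigma}_t \to \partial B_1(0)$ together with $\int_{\widetilde{\Sigma}_t} \widetilde{H}\, d\widetilde{\sigma} \to (n-1) w_{n-1}$. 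The remaining step is to absorb the $V$-factor into this limit.

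For this, I would use the weak asymptotic flatness $V = 1 + o(1)$ as $|x| \to \infty$. Since $\widetilde{\Sigma}_t$ is eventually contained in a fixed compact annulus around $\partial B_1(0)$ in the blown-down picture, the points $r(t)\widetilde{\mathbf{x}}$ on $\Sigma_t$ satisfy $|r(t)\widetilde{\mathbf{x}}| \to \infty$ uniformly, so $V(r(t)\widetilde{\mathbf{x}}) \to 1$ uniformly on $\widetilde{\Sigma}_t$. Combined with the uniform $L^1$ bound on $\widetilde{H}$ coming from Proposition A.5, this gives
\[
\int_{\widetilde{\Sigma}_t} V(r(t)\widetilde{\mathbf{x}})\, \widetilde{H}\, d\widetilde{\sigma} \longrightarrow (n-1) w_{n-1}.
\]
Putting the three ingredients together,
\[
\lim_{t\to\infty} Q(t) = w_{n-1}^{-\frac{n-2}{n-1}} \cdot (n-1) w_{n-1} = (n-1) w_{n-1}^{\frac{1}{n-1}},
\]
as claimed.

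The only potentially delicate point is the passage from the $L^1$ convergence of $\widetilde{H}$ alone (as stated in Proposition A.5) to the convergence of $\int V \widetilde{H}\, d\widetilde{\sigma}$. This is not a hard obstacle, but it must be argued carefully: the uniform convergence $V \to 1$ on the relevant region follows from the weak asymptotic flatness decay \eqref{V_decay}, and no stronger asymptotics on $V$ or $g$ are needed. This is the reason the corollary applies in the weakly asymptotically flat regime without modification.
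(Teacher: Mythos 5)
Your proof is correct and follows essentially the same route as the paper: rewrite $Q(t)$ in the blown-down picture, note that the mass term is killed by the $|\Sigma_t|^{-\frac{n-2}{n-1}}$ factor via the exponential area law, and absorb the $V$-factor into the $\int\widetilde{H}\to(n-1)w_{n-1}$ limit using the uniform decay $V=1+o(1)$ on $\Sigma_t$ (the paper does this by sandwiching with $\min_{\Sigma_t}V$ and $\max_{\Sigma_t}V$, which is the same estimate you make). The one cosmetic discrepancy is the blow-down normalization: Section~2 of the paper writes $\widetilde{\mathbf{X}}_t=r(t)^{-2}\mathbf{X}_t$, which appears to be a typo; your $r(t)^{-1}$ is the scaling compatible with the metric rescaling $g_t(\mathbf{x})=r(t)^{-2}g(r(t)\mathbf{x})$ and is what is actually used.
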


\begin{proof}
We have

\begin{eqnarray}
    Q(t) &=& |\Sigma_{t}|^{-\frac{n-2}{n-1}} \left( \int_{\Sigma_{t}} V H d\sigma + 2m \right) \\
    &=& w_{n-1}^{-\frac{n-2}{n-1}} \int_{\widetilde{\Sigma}_{t}} \widetilde{V} \widetilde{H} d\sigma + 2m |\Sigma_{t}|^{-\frac{n-2}{n-1}}, \nonumber
\end{eqnarray}
where $\widetilde{V}(x)= V(r(t) x)$ in the end $U$. Since

\begin{equation*}
    \left( \min_{\Sigma_{t}} V  \right) \int_{\widetilde{\Sigma}_{t}} H d\sigma \leq \int_{\widetilde{\Sigma}_{t}} \widetilde{V} \widetilde{H} d\sigma \leq \left( \max_{\Sigma_{t}} V \right) \int_{\widetilde{\Sigma}_{t}} H d\sigma,
\end{equation*}
the result follows from the convergence of $V$ and the previous proposition.
\end{proof}

\section{The mean curvature of spheres in Schwarzschild}
Setting $r = s (1 + \frac{m}{2} s^{2-n})^{\frac{2}{n-2}}$ reveals that the Schwarzschild metric is conformally flat
\begin{align*}
g_m = \lt( 1 + \frac{m}{2} s^{2-n}\rt)^{\frac{4}{n-2}} (ds^2 + s^2 g_{S^{n-1}}).
\end{align*}
Here $g_m$ is defined on $\R^n \setminus B_{s_0}$ with $s_0 = \lt( \frac{m}{2}\rt)^{\frac{1}{n-2}}$ for $m > 0$ and $s_0 =0$, the curvature singularity, for $m < 0$. For $m > 0$, the horizon is the sphere $\{ s = s_0\}$.

Let $S$ be a sphere in $\R^n \setminus B_{s_0}$ that encloses $B_{s_0}$ with defining equation $|x - x_0|^2 = s_1^2$. Let $\phi = 1 + \frac{m}{2} s^{2-n}$. The mean curvature of $S$ with respect to $g_m$ is given by
\begin{align}
H &= \phi^{-\frac{2}{n-2}} \lt( \frac{n-1}{s_1} + \frac{2(n-1)}{n-2} \phi^{-1} \frac{\pl\phi}{\pl\nu_1}\rt)\notag\\
&= \phi^{-\frac{2}{n-2}} \lt( \frac{n-1}{s_1} -2 (n-1) \phi^{-1} s^{1-n} \frac{m}{2} \langle \nu_1,\pl_s \rangle \rt) \label{formula of mean curvature for spheres in Sch}
\end{align}
where $\nu_1$ denotes the outward normal vector of $S$ and $\langle \nu_1, \pl_s \rangle$ denotes the inner product, both with respect to the Euclidean metric. We immediately see that when $m<0$, all spheres enclosing $\{ 0\}$ are strictly mean-convex since $\langle \nu_1,\pl_s \rangle > 0$. 

It is different for $m > 0$. For any $s_1 > s_0$  we translate the sphere $\pl B_{s_1}(0)$ upward. Initially, the sphere is strictly mean-convex. But at the instant $\pl B_{s_1}(x_1)$ with
\[ \frac{1}{s_1} - 2 (\phi(|x_1| -s_1))^{-1} (|x_1|-s_1)^{1-n} \frac{m}{2} \] 
it becomes weakly mean-convex: the mean curvature is zero\footnote{This fact was overlooked in \cite{Wei}.} at the south pole and positive elsewhere. To verify the last assertion, use high school algebra to show
\begin{align*}
\langle \nu_1,\pl_s \rangle &= \langle \frac{x-x_1}{s_1}, \frac{x}{s} \rangle = \frac{s^2 - \langle x,x_1 \rangle}{s s_1} = \frac{s^2 - |x_1|^2 + s_1^2}{2s s_0},
\end{align*} 
where the defining equation $s^2 - 2 \langle x,x_1 \rangle + |x_1|^2 = s_1^2$ is used in the last equality, and get 
\begin{align*}
\frac{d}{ds} \lt( s^{1-n} \langle \nu_1,\pl_s \rangle \rt) = \frac{s^{-n-1}}{2s_0} \lt( (2-n)s^2 + n |x_1|^2 - n s_1^2 \rt) < 0
\end{align*}
since $s_1 > |x_1|$. If we further translate $\pl B_{s_1}(x_1)$ up, the cap centered at the south pole with $H < 0$ grows larger.  Since $\phi s$ increases in $s$, we deduce from \eqref{formula of mean curvature for spheres in Sch} that for radii $s_1 < s_2$, the corresponding south pole $x_2 - s_2 \vec e_n$ is higher than $x_1 - s_1 \vec{e}_n$, see Figure \ref{spheres}. We also infer that  $\pl B_{t}(x_1), s_1 < t < \infty$ forms an strictly mean-convex foliation of $\R^n \setminus \overline{ B_{s_1}(x_1)}$. It follows that, see  \cite[Proposition 2.6]{HW} for example, $\pl B_{s_1}(x_1)$ is strictly outer-minimizing. The same argument shows that strictly mean-convex spheres are strictly outer-minimizing.     

\begin{figure}
\centering
\scalebox{0.5}{
\begin{tikzpicture}
\draw (0,0) circle (1);
\draw (0,2) circle (3.5);
\draw (0,2.4) circle (3.7);
\draw (0,0) --(1,0) node[below, midway] {$s_0$};
\draw (0,2)--(3.5,2) node[below, midway] {$s_1$};
\draw (0,2.4)--(3.7,2.4) node[above,midway] {$s_2$};
\node (A) at (0,2) [below] {$x_1$};
\node (B) at (0,0) [above] {$0$};
\node (C) at (0,2.4) [above] {$x_2$};
\end{tikzpicture}
}
\caption{Two spheres in $\mathcal{A}$ with radius $s_1 < s_2$. The sphere $\pl B_{s_0}(0)$ is the horizon.}
\label{spheres}
\end{figure}

We summarize the discussion into the following lemma.
\begin{lem}\label{property of spheres}
Consider Schwarzschild space with $m \neq 0$. When $m < 0$, all spheres that enclose the curvature singularity are strictly mean-convex. 

When $m> 0$, there is a nonempty collection of spheres $\mathcal{A}$ that enclose the horizon, has zero mean curvature at one point and positive mean curvature elsewhere. Spheres in $\mathcal{A}$, as well as spheres that enclose the horizon and are strictly mean-convex, are strictly outer-minimizing.  
\end{lem}

We include a well-known fact used in the proof of Theorem \ref{equality case}.
\begin{lem}\label{path-connected}
Let $E$ be a subset of Hausdorff dimension less than $n-1$ in $\R^n, n \ge 2$. Then $\R^n \setminus E$ is path-connected.
\end{lem}
\begin{proof}
We first treat the case $n=2$. Let $x,y \in \R^2 \setminus E$. Consider the space of arcs connecting $x$ and $y$ which can be identified with $[0,\pi]$ equipped with the Lebesgue measure, see Figure \ref{space of arcs}.  By Area theorem, almost every arc has empty intersection with $E$. This proves that $\R^2 \setminus E$ is path-connected. 

\begin{figure}
\centering
\begin{tikzpicture}
\node (A) at (0,-1) [below] {$x$};
\node (B) at (0,1) [above] {$y$};
\draw (0,-1) -- (0,1);
\draw (0,-1) to [bend right] (0,1);
\draw (0,-1) to [bend left] (0,1);
\draw (0,-1) to [bend right=75] (0,1);
\draw (0,-1) to [bend left=75] (0,1);
\draw (0,-1) arc (-90:90:1);
\draw (0,-1) arc (270:90:1);
\end{tikzpicture}
\caption{Space of arcs}
\label{space of arcs}
\end{figure}

Let $x,y \in \R^n \setminus E$ and consider the space of hyperplanes containing $x$ and $y$. By Area theorem, almost every hyperplane has $\dim (P \cap E) < n-2$. By induction hypothesis, $x$ and $y$ are connected by a path on these hyperplanes. 
\end{proof}

\printbibliography[title={References}]

\begin{center}
\textnormal{ \large Centre for Geometry and Topology \\
University of Copenhagen\\
Copenhagen, DE \\
e-mail: brdh@math.ku.dk}\\
\end{center}
\vspace{1cm}
\begin{center}
\textnormal{ \large Department of Applied Mathematics \\
National Yang-Ming Chiao Tung University \\
Hsinchu, Taiwan 30010 \\
%\vspace{0.5cm}
%National Center for Theoretical Sciences, Mathematics Division \\
%National Taiwan University \\
%Taipei City, Taiwan 10617\\
e-mail: yekaiwang@nycu.edu.tw}\\
\end{center}

%\begin{thebibliography}{Besse}
%\bibitem{B} Bartnik
%\bibitem{BHW} Brendle, Hung, Wang
%\bibitem{Hagen} Hagen, H. Mueller zum. {\it On the analyticity of static vacuum solutions of Einstein’s
%equations}. Mathematical Proceedings of the Cambridge Philosophical Society 67 (1970), pp. 415–421.
%\bibitem{H} Harvie, Brian {\it Minkowski inequality for asymptotically flat manifolds in dimensions $n \ge 8$ and its applications}
%\bibitem{HMM} Huang, Martin, Miao
%\bibitem{HI} Huisken, Ilmanen
%\bibitem{HI2} Huiksen, Ilmanen {\it Higher Regularity of the Inverse Mean Curvature Flow}.
%\bibitem{HW} Harvie, Wang
%\bibitem{LP}
%\bibitem{Mc} Mccormick
%\bibitem{MT} Miao, Tam
%\bibitem{ST} Shi, Tam
%\bibitem{Wei} Wei
%\bibitem{AM} Agostiniani, Mazzieri.
%\bibitem{Smarr} Smarr
%\bibitem{Schoen} Schoen
%\bibitem{BMA} Bunting, Masood ul Alam
%\bibitem{Rob} Robinson
%\bibitem{Israel} Israel
%\bibitem{Cederbaum} Cederbaum
%\bibitem{CG} Cederbaum-Galloway
%\bibitem{Rau} Raulot
%\bibitem{Aub} Aubin {\it Some Nonlinear Problems in Riemannian Geometry}
%\bibitem{Bar03} Bartnik {\it Mass and $3$ Metrics of Non-Negative Scalar Curvature}
%\end{thebibliography}
\end{document}